\providecommand{\tabularnewline}{\\}
\DeclareRobustCommand{\lyxsout}[1]{\ifx\\#1\else\sout{#1}\fi}
\theoremstyle{plain}
\newtheorem{theorem}{\protect\theoremname}
\theoremstyle{plain}
\newtheorem{proposition}[theorem]{\protect\propositionname}
\newenvironment{proof}[1][\protect\proofname]{\par
	\normalfont\topsep6\p@\@plus6\p@\relax
	\trivlist
	\itemindent\parindent
	\item[\hskip\labelsep\scshape #1]\ignorespaces
}{%
	\endtrivlist\@endpefalse
}
\providecommand{\proofname}{Proof}
\theoremstyle{plain}
\newtheorem{lemma}[theorem]{\protect\lemmaname}
\theoremstyle{remark}
\newtheorem{remark}[theorem]{\protect\remarkname}
\theoremstyle{plain}
\newtheorem{corollary}[theorem]{\protect\corollaryname}
\providecommand{\corollaryname}{Corollary}
\providecommand{\lemmaname}{Lemma}
\providecommand{\propositionname}{Proposition}
\providecommand{\remarkname}{Remark}
\providecommand{\theoremname}{Theorem}
\author{ 
	Qi Deng \thanks{qideng@mail.shufe.edu.cn, Shanghai University of Finance and Economics} \and 
    Chenghao Lan \thanks{daniellan@163.sufe.edu.cn, Shanghai University of Finance and Economics}}
\date{\vspace{-5ex}}
\global\long\def\wtil#1{\widetilde{#1}}%
\global\long\def\mbb#1{\mathbb{#1}}%
\global\long\def\br#1{\left(#1\right)}%
\global\long\def\norm#1{\left\Vert #1\right\Vert }%
\global\long\def\argmin{\operatornamewithlimits{argmin}}%
\global\long\def\dis{\text{dist}\,}%
\global\long\def\dom{\text{dom}}%
\global\long\def\raw{\rightarrow}%
\global\long\def\vep{\varepsilon}%
\global\long\def\dom{\operatornamewithlimits{dom}}%
\global\long\def\tsum{{\textstyle {\sum}}}%
\global\long\def\Ncal{\mathcal{N}}%
\global\long\def\Rbb{\mathbb{R}}%
\global\long\def\Ncal{\mathcal{N}}%
\global\long\def\Ocal{\mathcal{O}}%
\global\long\def\Pcal{\mathcal{P}}%
\global\long\def\Ebb{\mathbb{E}}%
\global\long\def\Ubf{\mathbf{U}}%
\global\long\def\Ibf{\mathbf{I}}%
\global\long\def\ik{i_{k}}%
\global\long\def\i{i}%
\begin{document}

\global\long\def\pt{\pi_{t}}%

\global\long\def\tone{\wtil{[1]}}%

\global\long\def\ts{\wtil{[s]}}%

\title{Efficiency of Coordinate Descent Methods For Structured Nonconvex
Optimization}
\maketitle
\begin{abstract}
Novel coordinate descent (CD) methods are proposed for minimizing nonconvex functions consisting of three terms: (i) a continuously differentiable term, (ii) a simple convex term, and (iii)  a concave and continuous term. First, by extending randomized CD to nonsmooth nonconvex settings, we develop a coordinate subgradient method that randomly updates block-coordinate variables by using block composite subgradient mapping. This method converges asymptotically to critical points with proven sublinear convergence rate for certain optimality measures. Second, we develop a randomly permuted CD method with two alternating steps: linearizing the concave part and cycling through variables. We prove asymptotic convergence to critical points and sublinear complexity rate for objectives with both smooth and concave parts. Third, we extend accelerated coordinate descent (ACD) to nonsmooth and nonconvex optimization to develop a novel randomized proximal DC algorithm whereby we solve the subproblem inexactly by ACD. Convergence is guaranteed with at most a few number of ACD iterations for each DC subproblem, and  convergence complexity is established for identification of some approximate critical points. Fourth, we further develop the third method to minimize certain ill-conditioned nonconvex functions: weakly convex functions with high Lipschitz constant to negative curvature ratios. We show that, under specific criteria, the ACD-based randomized method has superior complexity compared to conventional gradient methods. Finally, an empirical study on sparsity-inducing  learning models demonstrates that CD methods are superior to gradient-based methods for certain large-scale problems.
\end{abstract}

\section{Introduction}


Coordinate descent (CD) methods update only a subset of coordinate
variables in each iteration, keeping other variables fixed. 
Due to their scalability to the so-called ``big data'' problems (see
\cite{RN216,RN180,hong2016a,richtarik2016parallel,RN18}), CD methods
have attracted significant attention from machine learning and data science. 
This paper will develop  efficient CD methods for large-scale structured
nonconvex problems in the following form:

\begin{mini}
{x\in\Rbb^d}{F(x)=f(x)+\phi(x) - h(x),}{}{}
{\label{main-problem}}
\end{mini} where $f(x)$ is continuously differentiable, $\phi(x)$ is convex
lower-semicontinuous with a simple structure, and $h(x)$ is convex
continuous. The nonconvex problem formed in \eqref{main-problem}
is sufficiently powerful to express a variety of machine learning
applications, including sparse regression, low rank optimization,
and clustering (see \cite{gotoh2018dc,RN343,thi2015dc}).

Our main contribution to the field is that we propose a number of
novel CD methods with guaranteed convergence for a broad class of
nonconvex problems described by \eqref{main-problem}. Our methods
include extending RCD and cyclic CD to nonsmooth and nonconvex settings,
and new randomized proximal DC and proximal point methods by using
ACD to solve the subproblems. For all the proposed algorithms, we
not only provide guarantees to asymptotic convergence, but also prove
rate of convergence for properly defined optimality measures. To the
best of our knowledge, this is the first study of coordinate descent
methods for such nonsmooth and nonconvex optimization with complexity
efficiency guarantee. Our results are summarized as follows.

Our first result is a new randomized coordinate subgradient descent
(RCSD) method for nonsmooth nonconvex and composite problems. While
our algorithm recovers existing nonconvex CD methods \cite{patrascu2015efficient}
as a special case, it allows the nonsmooth part to be inseparable
and concave, and coordinates to be sampled either uniformly or non-uniformly
at random. We show the asymptotic convergence to critical points,
and we establish the sublinear rate of convergence for a proposed
optimality measure which naturally extends the proximal gradient mapping
to nonsmooth and nonconvex settings.

Motivated by block coordinate gradient descent (BCGD) \cite{RN18},
we propose a new randomly permuted coordinate descent (RPCD) for nonsmooth
and nonconvex optimization. Our primary innovation is to alternate
RPCD between linearizing the concave part $h(x)$ and successively
updating all the block-coordinate variables based on some cycling
order. The cycling order can be either deterministic or randomly shuffled,
provided that each block of variables is updated once in each loop.
We provide asymptotic convergence of BCGD method to critical points.
For a certain case ($\phi(x)=0$), we also establish a sublinear rate
of convergence of the subgradient norm.

We next extend accelerated coordinate descent methods to the nonsmooth and nonconvex setting by
considering the difference-of-convex representation of Problem \eqref{main-problem}.
We propose an ACD-based proximal DC (ACPDC) algorithm by transforming
$F(x)$ into a sequence of strongly convex functions that are approximately
minimized by the ACD method. We show that ACPDC is sufficiently fast
that only a few rounds of ACD are needed in each iteration. Hence,
ACPDC offers significant improvements compared to the classic DC
algorithm, which requires exact optimal solutions to the subproblems.
ACPDC also offers advantages to the proximal DC algorithm, an extension
of the DC algorithm that performs one proximal gradient descent step
to minimize the majorized function. Taking advantages of the fast
convergence of ACD, ACPDC is more efficient than gradient-based DC
algorithm in exploiting the problem structure, offering a much better trade-off
between iteration complexity and running time.

Finally, we draw attention to minimization of weakly convex functions,
namely, the nonconvex functions with bounded negative curvature, and
propose a new ACD-based proximal point method (ACPP) for solving such
problems. By assuming that the objective function is weakly convex,
faster rates of convergence can be attained. Specifically, we show
that the complexity rate of ACPP can be significantly better than
that of classic CD approaches for ill-conditioned problems; ill-conditioned
problems are those that have a relatively high ratio between Lipschitz
constant to negative curvature.

\paragraph{Related work}
There is a significant body of work on CD methods for nonconvex optimization.
We refer to \cite{hong2016a} for some general strategies
to develop block update algorithms for nonconvex optimization. \cite{xu2017a}
proposed proximal cyclic CD methods for minimizing composite functions
with nonconvex  separable regularizers. However, their
work didn't include the nonconvex and nonsmooth function described in
\eqref{main-problem}. See such an example in Section \ref{sec:Applications}.
A recent work \cite{hien2019inertial}
extended the cyclic block mirror descent to the nonsmooth
setting, and guaranteed asymptotic convergence to block-wise optimality for minimizing Problem \eqref{main-problem}. In contrast, our work directly guarantees convergence
to the critical points by employing a different linearization technique to handle the nonsmooth part $h(x)$. 
The work \cite{beck2018optimization} proposed efficient CD-type algorithms for achieving  specific coordinate-wise optimality for  problems with group sparsity terms;
they proved that the proposed coordinate-wise optimality is more restrictive than stationarity for such nonconvex problems. 
Apart from these works, a number of  CD methods with complexity guarantees
have also been developed. For example, \cite{patrascu2015efficient}
proposed randomized coordinate descent for nonconvex composite problems,
possibly with a linear constraint. Their algorithm 1-RCD can be viewed
as a special case of our first algorithm RCSD when $h(x)$ is void
and uniform sampling is adopted. Additionally, \cite{RN66} proposed
randomized block mirror descent for stochastic optimization of convex
and nonconvex composite problems. However, none of these proposals
consider a concave nonsmooth part in the objective, nor do they improve
on algorithm efficiency for the ill-conditioned nonconvex problem;
we fully address both of these challenges for coordinate descent methods
in this paper.

Another research line relevant to our study is the so-called DC optimization
(see \cite{thi2018dc,yuille2002concave,gotoh2018dc,thi2015dc}). Specified
for minimizing the difference-of-convex (DC) functions, a DC algorithm
alternates between linearizing the concave part and optimizing some
convex surrogate of DC function by applying convex algorithms. Lately,
much progress has been made in developing more efficient DC algorithms
and in applying DC algorithms to machine learning and statistics (see
\cite{gotoh2018dc,RN343,wen2018proximal,xu2018stochastic,nouiehed2018on}).
We refer to the recent work \cite{thi2018dc} for a general review
of DC algorithms and the applications. Notably, as a special case
of DC functions, the weakly convex
function has been increasingly popular due to its importance in machine
learning and statistics (\cite{fan2001variable,zhang2012a,drusvyatskiy2018efficiency}).
While it is possible to optimize weakly convex functions by directly
generalizing convex methods to the nonconvex settings (see \cite{saeed-lan-nonconvex-2013,RN97,patrascu2015efficient,RN66}),
much stronger efficiency guarantees can be obtained by indirect approaches
such as proximal point methods and prox-linear methods (see \cite{lan2018accelerated,kong2018complexity,RN323,davis2019proximally,duchi2018stochastic,drusvyatskiy2018efficiency}).
However, for the weakly-convex and the more general DC problems, it
remains to develop efficient CD-type methods that are scalable to high-dimensional
and large-scale data.

\paragraph{Outline of the paper}

Section \ref{sec:prelim} introduces notations and preliminaries.
Section \ref{sec:rcd} and Section \ref{sec:rand-perm-cd} present
RCSD and RPCD for nonsmooth nonconvex optimization, and then establish
their convergence results. Section \ref{sec:acd-dc} presents a new
DC algorithm based on ACD and demonstrates its asymptotic convergence
to critical points and its convergence complexity of a proposed optimality
measure. Section \ref{sec:acd-pp} considers the nonconvex problem
with bounded negative curvature and presents an even faster proximal
point algorithm based on using ACD. Section \ref{sec:Applications}
discusses the applications of proposed methods on sparsity-inducing
machine learning models, and present preliminary experiments to demonstrate
the advantages of our proposed CD methods. Finally, Section \ref{sec:Conclusion}
draws conclusions.

\section{Notations and Preliminaries\label{sec:prelim}}

We denote $[m]=\left\{ 1,2,...,m\right\} $. Let $d$ be a positive integer and $\Ibf_{d}$
be the $d\times d$ identity matrix. Assume that matrix $\Ubf_{i}\in\Rbb^{d\times d_{i}}$
($i\in[m]$) satisfies: $\left[\Ubf_{1},\Ubf_{2},...,\Ubf_{m}\right]=\Ibf_{d}$
where $\sum_{i=1}^{m}d_{i}=d$. Let $x_{i}=\Ubf_{i}^{T}x$ be the
restriction of $x$ to the $i$-th block, and we hereby express $x=\sum_{i=1}^{m}\Ubf_{i}x_{\i}$.
Let $\|\cdot\|_{\i}$ be the standard Euclidean norm on $\mbb R^{d_{i}}$,
and the norm $\norm{\cdot}$ on $\Rbb^{d}$ is denoted by $\|x\|=\sqrt{\sum_{i=1}^{m}\|x_{\i}\|_{\i}^{2}}$
. We say that $f$ is block-coordinate-wise (or block-wise) Lipschitz
smooth, if there exist constants $L_{1},L_{2},...,L_{m}>0$ such that
for any $x,x+\Ubf_{i}t\in\mbb R^{d_{i}}$ and $t\in\mbb R^{d_{i}}$,
$i\in[m]$, we have

\[
\|\nabla_{i}f(x)-\nabla_{i}f(x+\Ubf_{i}t)\|_{\i}\le L_{i}\|t\|_{\i}.
\]
For any $s\in[0,1]$, we define $T_{s}=\sum_{i=1}^{m}L_{i}^{s}$ and
$\norm x_{[s]}=\sqrt{\sum_{i=1}^{m}L_{i}^{s}\|x_{\i}\|_{\i}^{2}}.$
The dual norm is defined by $\|y\|_{[s],*}=\sqrt{\sum_{i=1}^{m}L_{i}^{-s}\|y_{\i}\|_{\i}^{2}}$.
Let $L_{\max}=\max_{1\le i\le m}L_{i}$ and $L_{\min}=\min_{1\le i\le m}L_{i}$.

We say that a function $f(x)$ is $\mu_{s}$-strongly convex with
respect to $\norm{\cdot}_{[s]}$ if 
\begin{equation}
f(y)\ge f(x)+\left\langle \nabla f(x),y-x\right\rangle +\frac{\mu_{s}}{2}\norm{x-y}_{[s]}^{2}.\label{eq:strong-cvx}
\end{equation}
It immediately follows that $\mu_{s}\in[0,1]$.

Given a proper lower semi-continuous (lsc) function $f:\Rbb^{d}\raw\Rbb$,
for any $x\in\dom(f)$, the limiting subdifferential of $f$ at $x$
is defined as
\begin{align*}
\partial f(x) & =\bigl\{ u:\exists x^{k}\raw x,u^{k}\raw u,\ \text{with }f(x^{k})\raw f(x)\,\\
 & \quad\text{and }\liminf_{y\neq x,y\raw x^{k}}\tfrac{f(y)-f(x^{k})-\left\langle u^{k},y-x^{k}\right\rangle }{\|y-x^{k}\|}\ge0\,\text{as }k\raw\infty\bigr\}.
\end{align*}
Using the limiting subdifferential, we can define the optimality measure
of the proposed algorithms. A point $x$ is known as a critical point
of Problem \eqref{main-problem} if $$\left[\nabla f(x)+\partial\phi(x)\right]\cap\partial h(x)\neq\emptyset,$$
and it is known as a stationary point of Problem \eqref{main-problem}
if $$\partial h(x)\subseteq\nabla f(x)+\partial\phi(x).$$ 
While it
can be readily seen that critical points are weaker than stationary
points, these two notions coincide when $h(x)$ is smooth: $\partial h(x)=\{\nabla h(x)\}$.

Throughout this paper, we make the following assumptions regarding
Problem \eqref{main-problem}.
\begin{enumerate}
\item $f(x)$ is continuously differentiable, and it is block-wise Lipschitz
smooth with parameters $L_{1},L_{2},...,L_{m}$.
\item $\phi(x)$ is convex block-wise separable. Specifically, $\phi(x)=\sum_{i=1}^{m}\phi_{i}(x_{(i)})$,
where $\phi_{i}:\mbb R^{d_{i}}\rightarrow\mbb R$ ($i\in[m]$) is
a proper convex lsc function. Furthermore, $\phi_{i}(x)$ has a simple
structure such that for $\gamma>0$, and $g,y\in\Rbb^{d_{i}}$, it
is relatively easy to solve the proximal problem: 
$
\min_{x\in\Rbb^{d_{i}}}\{\left\langle g,x\right\rangle +\phi_{i}(x)+\tfrac{\beta}{2}\|x-y\|_{i}^{2}\}.
$
\item $h(x)$ is a convex continuous function.
\item $F(x)$ is level-bounded in the sense that the lower level set $\{x:F(x)\;\le r\}$
is bounded for any $r\in\Rbb$.
\item There exists an optimal solution $x^{*}$ such that $F(x^{*})=\min_{x}F(x)>-\infty$.
\end{enumerate}

\section{Randomized Coordinate Subgradient Descent\label{sec:rcd}}

Our goal in this section is to develop the proposed randomized coordinate
subgradient descent (RCSD) for the nonconvex problem \eqref{main-problem}
in Algorithm \ref{alg:rcsd}. This method can be regarded as a block-wise
proximal-subgradient-type algorithm that iteratively updates some
random coordinates while keeping the other coordinates fixed. 
Let $\gamma=[\gamma_{1},\gamma_{2},...,\gamma_{m}]^{T}$
be a positive vector. For any $y\in\Rbb^{d_{i}}$, the composite block
proximal mapping is given by
\begin{equation}
\Pcal_{i}\left(\bar{x}_{i},y_{i},\gamma_{i}\right)=\argmin_{x\in\Rbb^{d_{i}}}\left\{ \left\langle y_{i},x\right\rangle +\phi_{i}(x)+\tfrac{\gamma_{i}}{2}\|\bar{x}_{\i}-x\|_{\i}^{2}\right\} .\label{eq:mapping}
\end{equation}
Furthermore, we denote $\Pcal\left(\bar{x},y,\gamma\right)=\sum_{i=1}^{m}\Ubf_{i}\Pcal_{i}\left(\bar{x}_{i},y_{i},\gamma_{i}\right)$.

Algorithm
\ref{alg:rcsd} is broad enough to cover a variety of first order
methods for nonsmooth nonconvex optimization. For example, when setting
block number $m=1$, we recover Algorithm 2 in \cite{RN343}. When
assuming that $h(x)$ is void, we recover Algorithm 1-RCD in \cite{patrascu2015efficient}.

\begin{algorithm2e}
\KwIn{$x^{0}$;}

\For{k=0,1,2,...K}{

Sample $i_{k}\in[m]$ randomly with $\text{prob}(i_{k}=i)=p_{i}$\;

Compute $\nabla_{i_{k}}f(x^{k})$ and $v_{\ik}^{k}$ where $v^{k}\in\partial h(x^{k})$\;

$x_{\ik}^{k+1}=\Pcal_{i_{k}}\br{x_{\ik}^{k},\nabla_{i_{k}}f(x^{k})-v_{\ik}^{k},\gamma_{i_{k}}}$\;

$x_{j}^{k+1}=x_{j}^{k}$ if $j\neq i_{k}$\;

}

\caption{RCSD \label{alg:rcsd}}
\end{algorithm2e}

In order to analyze the convergence property of RCSD, we must first
establish some optimality measure. Let $v\in\partial h(x)$, we define the composite block subgradient
as
\begin{equation}
g_{i}(x,\nabla_{\i}f(x)-v_{i},\gamma_{i})=\gamma_{i}\left(x_{\i}-\Pcal_{i}(x_{i},\nabla_{\i}f(x)-v_{i},\gamma_{i})\right),\label{eq:comp-subgrad}
\end{equation}
and we define the composite subgradient as
\begin{equation}
g(x,\nabla f(x)-v,\gamma)=\tsum_{i=1}^{m}\Ubf_{i}g_{i}(x,\nabla_{\i}f(x)-v_{i},\gamma_{i}).
\end{equation}
The notations $g(x,\nabla f(x)-v,\gamma)$ and $g(x)$ are used interchangeably
when there is no ambiguity. According to the above definition, $\|g(\cdot)\|_{*}$
measures the progress in solving the subproblem. It can be seen that
$\|g(x)\|_{[s],*}\neq0$ when $x$ is a non-critical point, and $\|g(x)\|_{[s], *}=0$ for some $v\in\partial h(x)$
when $x$ is a critical point. This result is proven in the following
proposition:
\begin{proposition}
$\bar{x}$ is a critical point of \eqref{main-problem} if and only
if there exists $v\in\partial h(\bar{x})$ such that $g(\bar{x},\nabla f(\bar{x})-v,\gamma)=0$.
\end{proposition}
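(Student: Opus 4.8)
The plan is to unwind both sides through the definition of the proximal mapping $\Pcal_i$ and its optimality (first-order) conditions. The key observation is that $g(\bar x,\nabla f(\bar x)-v,\gamma)=0$ if and only if $g_i(\bar x,\nabla_i f(\bar x)-v_i,\gamma_i)=0$ for every block $i$, which by \eqref{eq:comp-subgrad} is equivalent to $\bar x_i=\Pcal_i(\bar x_i,\nabla_i f(\bar x)-v_i,\gamma_i)$ for all $i$; i.e.\ $\bar x_i$ is the minimizer of $\langle \nabla_i f(\bar x)-v_i,x\rangle+\phi_i(x)+\tfrac{\gamma_i}{2}\|\bar x_i-x\|_i^2$ over $x\in\Rbb^{d_i}$.

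**First I would** write the first-order optimality condition for that (strongly convex, hence uniquely solved) proximal subproblem. Since $x=\bar x_i$ achieves the minimum, $0\in\nabla_i f(\bar x)-v_i+\partial\phi_i(\bar x_i)+\gamma_i(\bar x_i-\bar x_i)=\nabla_i f(\bar x)-v_i+\partial\phi_i(\bar x_i)$, so the quadratic term drops out and the condition is simply $v_i\in\nabla_i f(\bar x)+\partial\phi_i(\bar x_i)$. Conversely, if $v_i\in\nabla_i f(\bar x)+\partial\phi_i(\bar x_i)$, then $\bar x_i$ satisfies the optimality condition of the strongly convex proximal subproblem and is therefore its unique minimizer, giving $g_i=0$. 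Assembling over all blocks and using the block separability of $\phi$ (Assumption 2, $\partial\phi(\bar x)=\sum_i\Ubf_i\partial\phi_i(\bar x_i)$), this shows $g(\bar x,\nabla f(\bar x)-v,\gamma)=0$ $\iff$ $v\in\nabla f(\bar x)+\partial\phi(\bar x)$.

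**Then I would** close the loop with the definition of a critical point: $\bar x$ is critical iff $[\nabla f(\bar x)+\partial\phi(\bar x)]\cap\partial h(\bar x)\neq\emptyset$, i.e.\ iff there exists $v$ lying in \emph{both} $\partial h(\bar x)$ and $\nabla f(\bar x)+\partial\phi(\bar x)$. Combined with the equivalence from the previous step, this is exactly the statement that there exists $v\in\partial h(\bar x)$ with $g(\bar x,\nabla f(\bar x)-v,\gamma)=0$.

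**The only mild subtlety** — not really an obstacle — is making sure the block-separability identity $\partial\phi(\bar x)=\sum_{i=1}^m\Ubf_i\partial\phi_i(\bar x_i)$ is invoked cleanly (it follows from Assumption 2 and the standard sum/product rule for separable convex functions), and noting that the proximal subproblem is strongly convex so its minimizer is unique and fully characterized by its first-order condition; everything else is a direct substitution. I would present the argument as a short chain of "if and only if" equivalences rather than splitting into two directions.
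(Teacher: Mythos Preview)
Your proposal is correct and follows essentially the same route as the paper: both unwind the definition of $g$ to the fixed-point condition $\bar x_i=\Pcal_i(\bar x_i,\nabla_i f(\bar x)-v_i,\gamma_i)$ and then pass to the first-order optimality condition $0\in\nabla_i f(\bar x)-v_i+\partial\phi_i(\bar x_i)$. The only cosmetic difference is in the reverse implication: the paper argues that $\bar x_i$ simultaneously minimizes the linear-plus-$\phi_i$ part (from \eqref{eq:opt-subprob}) and the quadratic part, hence minimizes their sum, whereas you invoke directly that the proximal subproblem is strongly convex and therefore its unique minimizer is characterized by the first-order condition; your version is marginally cleaner but the content is the same.
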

\begin{proof}
If $g(\bar{x},\nabla f(\bar{x})-v,\gamma)=0$, then $\bar{x}=\Pcal(\bar{x},\nabla f(\bar{x})-v,\gamma)$. Due to the
optimality condition for (\ref{eq:mapping}), we have $0\in\nabla_{i}f(\bar{x})-v_{i}+\partial\phi_{i}(\bar{x}_{i})$,
$i\in[m]$; hence $\bar{x}$ is a critical point. On the other hand,
if $\bar{x}$ is a critical point, then there exists $v\in\partial h(\bar{x})$
such that 
\begin{equation}
0\in\nabla_{i}f(\bar{x})-v_{i}+\partial\phi_{i}(\bar{x}_{i}),\quad i\in[m].\label{eq:opt-subprob}
\end{equation}
 For brevity, let us denote $u=\nabla f(\bar{x})-v$. We have
\begin{equation}
\begin{aligned}\left\langle u_{i},\bar{x}_{\i}\right\rangle +\phi_{i}(\bar{x}_{\i}) & \ge\min_{x\in\Rbb^{d_{i}}}\left\{ \left\langle u_{i},x\right\rangle +\phi_{i}(x)+\frac{\gamma_{i}}{2}\|\bar{x}_{\i}-x\|_{\i}^{2}\right\} \\
 & \ge\min_{x\in\Rbb^{d_{i}}}\bigl\{\left\langle u_{i},x\right\rangle +\phi_{i}(x)\bigr\}+\min_{x\in\Rbb^{d_{i}}}\frac{\gamma_{i}}{2}\|\bar{x}_{\i}-x\|_{\i}^{2}\\
 & =\left\langle u_{i},\bar{x}_{\i}\right\rangle +\phi_{i}(\bar{x}_{\i}),
\end{aligned}
\label{prop:middle-bound}
\end{equation}
where the final equality follows from the optimality condition (\ref{eq:opt-subprob}).
Hence we have strict equality in (\ref{prop:middle-bound}), and it
follows that
\[
\bar{x}_{i}=\argmin_{x\in\Rbb^{d_{i}}}\left\{ \left\langle u_{i},x\right\rangle +\phi_{i}(x)+\frac{\gamma_{i}}{2}\|\bar{x}_{\i}-x\|_{\i}^{2}\right\} ,\quad i\in[m].
\]
\end{proof}
We next present an important result that is related to the proximal
subproblem, presenting the proof for completeness.
\begin{lemma}
\label{lem:optimality-prox-map} Let $\psi(\cdot)$ be a convex lsc
function and define $z=\argmin_{y}\left\langle g,y\right\rangle +\frac{\gamma}{2}\|x-y\|^{2}+\psi(y),$
then we have
\begin{equation}
\left\langle g,z-y\right\rangle +\psi(z)-\psi(y)\le\frac{\gamma}{2}\,\bigl[\|x-y\|^{2}-\|z-y\|^{2}-\|x-z\|^{2}\bigr].\label{eq:3-point}
\end{equation}
\end{lemma}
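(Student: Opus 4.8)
The plan is to read off the inequality from the $\gamma$-strong convexity of the function being minimized. Set $Q(y)=\left\langle g,y\right\rangle+\tfrac{\gamma}{2}\|x-y\|^{2}+\psi(y)$. Since $y\mapsto\tfrac{\gamma}{2}\|x-y\|^{2}$ is $\gamma$-strongly convex (with respect to the Euclidean norm appearing in the statement), $\left\langle g,y\right\rangle$ is affine, and $\psi$ is convex lsc, the sum $Q$ is $\gamma$-strongly convex; coercivity and strict convexity also make $z=\argmin_y Q(y)$ well defined, so the statement is meaningful.

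Next I would invoke the standard consequence of strong convexity at a minimizer: if $Q$ is $\gamma$-strongly convex with minimizer $z$, then $Q(y)\ge Q(z)+\tfrac{\gamma}{2}\|y-z\|^{2}$ for every $y$ (this is \eqref{eq:strong-cvx}-type bound applied to $Q$, using $0\in\partial Q(z)$). Writing out both sides with the explicit form of $Q$ gives
\[
\left\langle g,y\right\rangle+\tfrac{\gamma}{2}\|x-y\|^{2}+\psi(y)\ \ge\ \left\langle g,z\right\rangle+\tfrac{\gamma}{2}\|x-z\|^{2}+\psi(z)+\tfrac{\gamma}{2}\|y-z\|^{2}.
\]
Rearranging — moving the $z$-terms to the left, the $\|x-y\|^{2}$-term to the right, and using $\|y-z\|=\|z-y\|$ — yields exactly
\[
\left\langle g,z-y\right\rangle+\psi(z)-\psi(y)\ \le\ \tfrac{\gamma}{2}\bigl[\|x-y\|^{2}-\|z-y\|^{2}-\|x-z\|^{2}\bigr],
\]
which is \eqref{eq:3-point}.

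If one prefers to avoid citing the strong-convexity-at-a-minimizer bound as a black box, an equivalent route is to use the first-order optimality condition $0\in g+\gamma(z-x)+\partial\psi(z)$, pick $\xi\in\partial\psi(z)$ with $\xi=-g-\gamma(z-x)$, apply the subgradient inequality $\psi(y)\ge\psi(z)+\left\langle\xi,y-z\right\rangle$, and then substitute the polarization identity $2\left\langle x-z,z-y\right\rangle=\|x-y\|^{2}-\|x-z\|^{2}-\|z-y\|^{2}$. Either way there is essentially no real obstacle; the only point demanding a little care is consistency of the norm (the same inner-product norm must be used in $Q$, in the optimality condition, and in the polarization identity), which is automatic here.
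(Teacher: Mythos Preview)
Your proposal is correct, and the alternative route you sketch at the end---optimality condition $0\in g+\gamma(z-x)+\partial\psi(z)$, subgradient inequality for $\psi$, then the three-point identity---is exactly the paper's proof. Your primary route via $\gamma$-strong convexity of $Q$ at its minimizer is just a repackaging of the same computation, so there is no substantive difference.
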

\begin{proof}
Using the optimality condition, there exists $\xi\in\partial\psi(z)$
such that $g+\xi+\gamma(z-x)=0$. Plugging this into the convexity
condition$\psi(y)\ge\psi(z)+\left\langle \xi,y-z\right\rangle $,
we have $\psi(y)+\left\langle g+\gamma(z-x),y-z\right\rangle \ge\psi(z).$
Furthermore, for any $x,y,z$ we have the equality 
\[
\left\langle z-x,y-z\right\rangle =\frac{1}{2}\|x-y\|^{2}-\frac{1}{2}\|z-y\|^{2}-\frac{1}{2}\|x-z\|^{2}.
\]
Combining the above two results immediately yields (\ref{eq:3-point}).
\end{proof}
We present the general convergence property of RCSD in the following
theorem.
\begin{theorem}
\label{thm:rcd}Let $x^{0},x^{1},x^{2},...x^{k}$ be a sequence generated
by Algorithm \ref{alg:rcsd}.
\begin{enumerate}
\item Assume that $\gamma_{i}>\frac{L_{i}}{2}$. The sequence $\{x^{k}\}$
is bounded, and almost surely (a.s.), every limit point of the sequence
is a critical point.
\item Assume that $\gamma_{i}=L_{i}$ ($i\in[m]$) and that blocks are sampled
with probability $p_{i}\propto L_{i}^{1-s}$ ($0\le s\le1$). Then
\[
\min_{0\le k\le K}\Ebb\norm{g(x^{k},\nabla f(x^{k})-v^{k},\gamma)}_{[s],*}^{2}\le\tfrac{2(\sum_{j=1}^{m}L_{j}^{1-s})[F(x^{0})-F(x^{*})]}{K+1},
\]
where the expectation is with respect to $i_{0},i_{1},...,i_{K}.$
\end{enumerate}
\end{theorem}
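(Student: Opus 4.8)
The plan is to reduce both claims to a single one-step descent estimate for $F$ along the iterates of Algorithm~\ref{alg:rcsd}. Fix iteration $k$ with sampled block $i_k$ and abbreviate $g^k_{i_k}:=g_{i_k}(x^k,\nabla_{i_k}f(x^k)-v^k_{i_k},\gamma_{i_k})$, so that by \eqref{eq:comp-subgrad} the update satisfies $x^{k+1}_{i_k}=x^k_{i_k}-\gamma_{i_k}^{-1}g^k_{i_k}$ and $x^{k+1}_j=x^k_j$ for $j\neq i_k$. I would combine three facts: (i) block-wise Lipschitz smoothness of $f$, giving $f(x^{k+1})\le f(x^k)+\langle\nabla_{i_k}f(x^k),x^{k+1}_{i_k}-x^k_{i_k}\rangle+\tfrac{L_{i_k}}{2}\|x^{k+1}_{i_k}-x^k_{i_k}\|_{i_k}^2$; (ii) Lemma~\ref{lem:optimality-prox-map} applied to the subproblem \eqref{eq:mapping} defining $x^{k+1}_{i_k}$, with $\psi=\phi_{i_k}$, $g=\nabla_{i_k}f(x^k)-v^k_{i_k}$, and $x=y=x^k_{i_k}$, $z=x^{k+1}_{i_k}$, giving $\langle\nabla_{i_k}f(x^k)-v^k_{i_k},x^{k+1}_{i_k}-x^k_{i_k}\rangle+\phi_{i_k}(x^{k+1}_{i_k})-\phi_{i_k}(x^k_{i_k})\le-\gamma_{i_k}\|x^{k+1}_{i_k}-x^k_{i_k}\|_{i_k}^2$; and (iii) convexity of $h$ with $v^k\in\partial h(x^k)$, together with $x^{k+1}-x^k$ being supported on block $i_k$, giving $\langle v^k_{i_k},x^{k+1}_{i_k}-x^k_{i_k}\rangle=\langle v^k,x^{k+1}-x^k\rangle\le h(x^{k+1})-h(x^k)$. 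Summing (i) and (ii), inserting (iii), and collapsing the $\phi_{i_k}$ terms into $\phi(x^{k+1})-\phi(x^k)$ by block separability, the gradient and $\phi$ contributions cancel and, using $\|x^{k+1}_{i_k}-x^k_{i_k}\|_{i_k}^2=\gamma_{i_k}^{-2}\|g^k_{i_k}\|_{i_k}^2$, I obtain
\[
F(x^{k+1})\le F(x^k)-\frac{\gamma_{i_k}-L_{i_k}/2}{\gamma_{i_k}^{2}}\,\|g^k_{i_k}\|_{i_k}^2 .
\]

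For part~1, when $\gamma_i>L_i/2$ this coefficient is strictly positive, so $\{F(x^k)\}$ is nonincreasing pathwise; Assumption~4 then forces $\{x^k\}$ to be bounded and Assumption~5 makes $\{F(x^k)\}$ bounded below, hence convergent. Writing $\Ebb_k$ for conditional expectation given $i_0,\dots,i_{k-1}$, the descent estimate yields $\Ebb_k[F(x^{k+1})]\le F(x^k)-c\,\|g(x^k)\|^2$ with $c=\min_i p_i(\gamma_i-L_i/2)/\gamma_i^2>0$ (using $\|g(x^k)\|^2=\sum_i\|g_i(x^k)\|_i^2$); the Robbins--Siegmund supermartingale convergence theorem then gives $\sum_k\|g(x^k)\|^2<\infty$ a.s., so $g(x^k)\raw 0$ a.s. For any limit point $\bar x=\lim_j x^{k_j}$, the subgradients $v^{k_j}\in\partial h(x^{k_j})$ stay bounded by local boundedness of a finite convex subdifferential on the bounded set $\{x^k\}$, so a further subsequence satisfies $v^{k_j}\raw\bar v$ with $\bar v\in\partial h(\bar x)$ by outer semicontinuity of $\partial h$; continuity of $\nabla f$ and of the proximal map $\Pcal_i(\cdot,\cdot,\gamma_i)$ then gives $g_i(x^{k_j},\nabla_i f(x^{k_j})-v^{k_j}_i,\gamma_i)\raw g_i(\bar x,\nabla_i f(\bar x)-\bar v_i,\gamma_i)$, which must be $0$, so $\bar x$ is a critical point by the preceding proposition.

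For part~2, with $\gamma_i=L_i$ the coefficient equals $1/(2L_i)$, so $F(x^{k+1})\le F(x^k)-\tfrac{1}{2L_{i_k}}\|g^k_{i_k}\|_{i_k}^2$; taking $\Ebb_k$ with $p_i=L_i^{1-s}/T_{1-s}$ gives $\Ebb_k[F(x^{k+1})]\le F(x^k)-\tfrac{1}{2T_{1-s}}\sum_i L_i^{-s}\|g_i(x^k)\|_i^2=F(x^k)-\tfrac{1}{2T_{1-s}}\norm{g(x^k)}_{[s],*}^2$ by definition of the dual norm. Taking total expectations, telescoping over $k=0,\dots,K$ and using $\Ebb[F(x^{K+1})]\ge F(x^*)$ yields $\sum_{k=0}^K\Ebb\norm{g(x^k)}_{[s],*}^2\le 2T_{1-s}\,[F(x^0)-F(x^*)]$, and bounding the minimum over $k\le K$ by the average gives exactly the stated bound since $T_{1-s}=\sum_{j=1}^m L_j^{1-s}$. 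The descent estimate and this telescoping are routine; the main obstacle is part~1, where the per-step decrease depends on the random block so pathwise telescoping only controls the sampled coordinate (necessitating the supermartingale argument), and where identifying limit points as critical points is delicate because $v^k$ need not converge, forcing a subsequential argument that leans on outer semicontinuity of $\partial h$ and continuity of the composite proximal map.
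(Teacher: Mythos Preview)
Your proof is correct and follows the same overall architecture as the paper's: both establish the pathwise descent inequality $F(x^{k+1})\le F(x^k)-(\gamma_{i_k}-L_{i_k}/2)\|x^{k+1}_{i_k}-x^k_{i_k}\|_{i_k}^2$ (you via Lemma~\ref{lem:optimality-prox-map} with $x=y$, the paper via the first-order optimality condition combined with convexity of $\phi$ and $h$), then apply the supermartingale convergence theorem for part~1 and telescope in expectation for part~2. The one noteworthy difference is in how limit points are shown to be critical. The paper introduces the full proximal point $\wtil x^{k+1}$ and, in order to invoke graph closedness of the limiting subdifferential of $\phi$, first verifies the $f$-attentive convergence $\phi(\wtil x^{k+1})\to\phi(\bar x)$ via an upper-semicontinuity argument based on the optimality inequality for $\wtil x^{k+1}$; it then explicitly extracts $\bar u\in\partial\phi(\bar x)$ and $\bar v\in\partial h(\bar x)$ with $\nabla f(\bar x)+\bar u=\bar v$. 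You instead exploit that each $\Pcal_i(\cdot,\cdot,\gamma_i)$ is continuous (indeed nonexpansive, since it is the proximal operator of a convex lsc function), so that $g(x^{k_j},\nabla f(x^{k_j})-v^{k_j},\gamma)\to g(\bar x,\nabla f(\bar x)-\bar v,\gamma)=0$ directly, and conclude by the proposition characterizing critical points. Your route is shorter and sidesteps the $\phi$-convergence check entirely; the paper's route is more self-contained in that it does not appeal to prox nonexpansiveness, and it makes the witnessing subgradients explicit rather than hiding them inside the fixed-point characterization.
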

\begin{proof}
First, due to the optimality condition, there exists $\xi_{\ik}^{k+1}\in\partial\phi_{\ik}(x_{\ik}^{k+1})$
such that
\begin{equation}
\nabla_{i_{k}}f(x^{k})-v_{\ik}^{k}+\xi_{\ik}^{k+1}=-\gamma_{i_{k}}\bigl(x_{\ik}^{k+1}-x_{\ik}^{k}\bigr).\label{eq:optimality-bcd-update}
\end{equation}
Using the convexity of $\phi(\cdot)$ and $h(\cdot)$, we obtain
\begin{align}
\phi(x^{k+1})-h(x^{k+1}) & \le\phi(x^{k})+\left\langle \xi^{k+1},x^{k+1}-x^{k}\right\rangle -h(x^{k})-\left\langle v^{k},x^{k+1}-x^{k}\right\rangle \nonumber \\
 & =\phi(x^{k})-h(x^{k})+\left\langle \xi_{\ik}^{k+1}-v_{\ik}^{k},x_{\ik}^{k+1}-x_{\ik}^{k}\right\rangle .\label{eq:convexity-bound}
\end{align}
In view of the relation (\ref{eq:optimality-bcd-update}), (\ref{eq:convexity-bound}),
and the block-wise smoothness of $f(x)$, we have
\begin{align}
F(x^{k+1}) & =f(x^{k+1})+\phi(x^{k+1})-h(x^{k+1})\nonumber \\
 & \le f(x^{k})+\left\langle \nabla_{\ik}f(x^{k}),x_{\ik}^{k+1}-x_{\ik}^{k}\right\rangle +\tfrac{L_{i_{k}}}{2}\|x_{\ik}^{k+1}-x_{\ik}^{k}\|_{\ik}^{2}\nonumber \\
 & \quad +\phi(x^{k+1})-h(x^{k+1})\nonumber \\
 & \le f(x^{k})+\left\langle \nabla_{\ik}f(x^{k})-v_{\ik}^{k}+\xi_{\ik}^{k+1},x_{\ik}^{k+1}-x_{\ik}^{k}\right\rangle +\tfrac{L_{i_{k}}}{2}\|x_{\ik}^{k+1}-x_{\ik}^{k}\|_{\ik}^{2}\nonumber \\
 & \quad +\phi(x^{k})-h(x^{k}) \nonumber \\
 & \le F(x^{k})+\left(\tfrac{L_{i_{k}}}{2}-\gamma_{i_{k}}\right)\|x_{\ik}^{k+1}-x_{\ik}^{k}\|_{\ik}^{2}. \label{eq:rbcd-recur}
\end{align}
Together with the identity $\|x_{\ik}^{k+1}-x_{\ik}^{k}\|_{\ik}^{2}=\|x^{k+1}-x^{k}\|^{2}$,
we have $F(x^{k+1})+\left(\gamma_{i_{k}}-\tfrac{L_{i_{k}}}{2}\right)\|x^{k+1}-x^{k}\|^{2}\le F(x^{k}).$
Let $\bar{\gamma}=\min_{1\le i\le m}\left(\gamma_{i}-\frac{L_{i}}{2}\right)$,
then we immediately see that $\{F(x^{k+1})+\bar{\gamma}\|x^{k+1}-x^{k}\|^{2}\}$
is a non-increasing sequence. Hence $\{x^{k}\}$ is a bounded sequence
due to the level-boundedness of $F(\cdot)$. Moreover, summing up
the relation (\ref{eq:rbcd-recur}) over $k=0,1,2,...,K$, we obtain
\begin{equation}
\bar{\gamma}\tsum_{k=0}^{K}\|x^{k+1}-x^{k}\|^{2}\le F(x^{0})-F(x^{K+1}).\label{eq:sq-sum-rbcd}
\end{equation}
Consequently, we conclude that $\lim_{k\rightarrow\infty}\|x^{k}-x^{k+1}\|=0$.

For brevity, let $g_{\i}^{k}=g_{\i}(x^{k},\nabla_{\ik}f(x^{k})-v_{\i}^{k},\gamma_{\i})$
and $g^{k}=\sum_{i=1}^{m}\Ubf g_{\i}^{k}$. We have 
\begin{equation}\label{eq:rbcd-mid-001}
\Ebb_{i_{k}}\left[\left(\gamma_{i_{k}}-\tfrac{L_{i_{k}}}{2}\right)\|x_{\ik}^{k+1}-x_{\ik}^{k}\|_{\ik}^{2}\right] =\tsum_{i=1}^{m}p_{i}\tfrac{\gamma_{i}-L_{i}/2}{\gamma_{i}^{2}}\|g_{i}^{k}\|^{2}\ge\beta\,\|g^{k}\|^{2},
\end{equation}
where $\beta=\min_{1\le i\le m}\{p_{i}\frac{\gamma_{i}-L_{i}/2}{\gamma_{i}^{2}}\}$.
In view of \eqref{eq:rbcd-recur} and \eqref{eq:rbcd-mid-001}, we have $$E_{i_{k}}[F(x^{k+1})-F^{*}]+\beta\,\|g^{k}\|^{2}\le F(x^{k})-F^{*}.$$
According to the supermartingale convergence theorem, the sequence
$\{F(x^{k+1})-F^{*}\}$ converges a.s. and $\sum_{k=0}^{\infty}\|g^{k}\|^{2}<\infty$
a.s. It follows that $\lim_{k\rightarrow\infty}g^{k}=0$ a.s. Let
us define
\begin{equation}
\wtil x^{k+1}=\argmin_{x}\{\left\langle \nabla f(x^{k})-v^{k},x\right\rangle +\phi(x)+\tsum_{i=1}^{m}\tfrac{\gamma_{i}}{2}\|x_{\i}^{k}-x_{\i}\|_{\i}^{2}\}.\label{eq:xtilde}
\end{equation}
Since $g^{k}=\sum_{i}^{m}\gamma_{i}\Ubf_{i}(x_{\i}^{k}-\wtil x_{\i}^{k+1})$,
we have $\lim_{k\raw+\infty}\wtil x^{k+1}-x^{k}=0$ a.s. Based on
the optimality of $\wtil x^{k+1}$, we have
\begin{equation}
\phi(\wtil x^{k+1})\le\phi(x)+\left\langle \nabla f(x^{k})-v^{k},x-\wtil x^{k+1}\right\rangle +\tsum_{i=1}^{m}\tfrac{\gamma_{i}}{2}\|x_{\i}^{k}-x_{\i}\|_{\i}^{2}-\tsum_{i=1}^{m}\tfrac{\gamma_{i}}{2}\|x_{\i}^{k}-\wtil x_{\i}^{k+1}\|_{\i}^{2}.\label{eq:phi-tilde-bound}
\end{equation}
Let $\bar{x}$ be a limit point of the sequence $\left\{ x^{k}\right\} $.
Passing to a subsequence if necessary, we have $\lim_{k\raw\infty}x^{k}=\bar{x}$.
By the continuity of $\nabla f(x)$, we have $\lim_{k\rightarrow\infty}\nabla f(x^{k})=\nabla f(\bar{x})$.
Taking $k\raw\infty$ in (\ref{eq:phi-tilde-bound}) and $x=\bar{x}$,
we have 
\[
\limsup_{k}\phi(\wtil x^{k+1})\le\phi(\bar{x}),\quad a.s.
\]
because $\lim_{k}\wtil x^{k+1}=\lim_{k}x^{k}=\bar{x}$ a.s. Since $\phi(x)$ is a lsc function, we have $\lim_{k\raw\infty}\phi(\wtil x^{k+1})=\phi(\bar{x})$.

Moreover, by the optimality condition of (\ref{eq:xtilde}), we have
\[
0=\nabla f(x^{k})-v^{k}+\tsum_{i=1}^{m}\gamma_{i}\Ubf_{i}(\wtil x_{\i}^{k+1}-x_{\i}^{k})+u^{k+1},
\]
for some $u^{k+1}\in\partial\phi(\wtil x^{k+1})$. Due to the boundedness
of $\{x^{k}\}$ and continuity of $h(x)$, $\{v^{k}\}$ is also a
bounded sequence. Passing to a subsequence if necessary, we have $\lim_{k\rightarrow\infty}v^{k}=\bar{v}$
a.s. for some $\bar{v}$, and $\lim_{k\rightarrow\infty}u^{k+1}=\bar{v} -\nabla f(\bar{x})=\bar{u}$
a.s. By graph continuity of limiting subdifferentials, we have $\bar{u}\in\partial\phi(\bar{x})$
and $\bar{v}\in\partial h(\bar{x})$ . Therefore, we conclude that
$\bar{x}$ is a.s. a critical point.

For the second part, summing up (\ref{eq:rbcd-recur}) over $k=0,1,2,...,K$
and rearranging the terms accordingly, we have 
\begin{equation}
\tsum_{k=0}^{K}\Bigl(\gamma_{i_{k}}-\tfrac{L_{i_{k}}}{2}\Bigr)\|x_{\ik}^{k+1}-x_{\ik}^{k}\|_{\ik}^{2}\le F(x^{0})-F(x^{K+1}).\label{eq:rcsd-sum-sq-dist}
\end{equation}
Since we assume that $\gamma_{i}=L_{i}$ and $p_{i}=L_{i}^{1-s}/\bigl(\sum_{j=1}^{m}L_{j}^{1-s}\bigr)$,
the following identity holds
\begin{equation}
\Ebb_{i_{k}}\Bigl[\Bigl(\gamma_{i_{k}}-\tfrac{L_{\ik}}{2}\Bigr)\|x_{\ik}^{k+1}-x_{\ik}^{k}\|_{\ik}^{2}\Bigr]=\tsum_{i=1}^{m}\tfrac{p_{i}}{2L_{i}}\|g_{\i}\|_{\i}^{2}=\tfrac{1}{2T_{1-s}}\|g^{k}\|_{[s],*}^{2}.\label{eq:expect-square-rbcd}
\end{equation}
 Taking the expectation of (\ref{eq:rcsd-sum-sq-dist}) with respect
to $\{i_{k}\}$ and using (\ref{eq:expect-square-rbcd}),we have 
\[
\tsum_{k=0}^{K}\Ebb\norm{g^{k}}_{[s],*}^{2}\le2T_{1-s}[F(x^{0})-F(x^{K+1})].
\]
\end{proof}
\begin{remark}
Notice that the sublinear rate in Theorem \ref{thm:rcd} is typical
for first order methods on nonsmooth and nonconvex problems. For instance,
if $h(x)$ is void and uniform sampling ($s=1$) is performed, we
recover the rate obtained for 1-RCD in \cite{patrascu2015efficient}.
Another difference between our work and \cite{patrascu2015efficient}
is that our analysis also adapts to the strategy of nonuniform sampling
$(s<1)$, whereby the composite subgradient is measured by the norm
$\|\cdot\|_{[s],*}$. Suppose that our goal is to have some $\varepsilon$-accurate
solution ($\min_{k}\Ebb\bigl[\|g^{k}\|^{2}\bigr]\le\varepsilon$),
the total number of iterations required by non-uniform sampling RCSD
(with $s=0$) is $O\bigl(\tfrac{\sum_{i=1}^{m}L_{i}}{\vep}\bigl[F(x^{0})-F(x^{*})\bigr]\bigr)$.
In contrast, since $\norm{g^{k}}_{[1],*}^{2}\ge\tfrac{1}{L_{\max}}\norm{g^{k}}^{2}$,
the bound provided by uniform sampling RCSD ($s=1$) is $O\bigl(\tfrac{L_{\max}m}{\vep}\bigl[F(x^{0})-F(x^{*})\bigr]\bigr)$.
\end{remark}

\section{Randomly Permuted Coordinate Descent\label{sec:rand-perm-cd}}

In this section, our goal is to develop a randomly permuted coordinate
descent (RPCD) method in Algorithm \ref{alg:rpcd}. When analyzing
the convergence of cyclic or permuted CD, we normally require the
triangle inequality to bound the gradient norm by the sum of point
distances. However, this may be difficult to achieve in the nonsmooth
setting since subgradient is not necessarily Lipschitz continuous.
To avoid this problem, Algorithm \ref{alg:rpcd} computes the subgradient
of $h(x)$ only once after updating all the blocks but it always
uses the new block gradient of $f(x)$ while updating the block
variables. The visiting order of block-coordinate variables can be
either deterministic or randomly shuffled, provided that all the blocks
are updated in each outer loop of the algorithm. The following theorem
summarizes the main convergence property of RPCD.

\begin{algorithm2e}
\KwIn{$x^{0}$;}

\For{k=0,1,2,...K-1}{

Compute $v^{k}\in\partial h(x^{k})$ and set $\wtil x^{0}=x^{k}$\;

Generate permutation: $\pi_{0},\pi_{1},\pi_{2},\ldots\pi_{m-1}$\;

\For{$t=0,2,...,m-1$}{

Set $\wtil x_{\pt}^{t+1}=\Pcal_{\pi_{t}}\bigl(\wtil x_{\pt}^{t},\nabla_{\pi_{t}}f\bigl(\wtil x^{t}\bigr)-v_{\pt}^{k},\gamma_{\pi_{t}}\bigr)$\;

Set $\wtil x_{j}^{t+1}=\wtil x_{j}^{t}$ if $j\neq\pi_{t}$\;

}

Set $x^{k+1}=\wtil x^{m}$\;

}

\caption{RPCD \label{alg:rpcd}}
\end{algorithm2e}

\begin{theorem}
Let $x^{1},x^{2},...,x^{K}$ be the generated sequence in Algorithm
\ref{alg:rpcd}.
\begin{enumerate}
\item If $\gamma_{i}\ge L_{i}$ ($1\le i\le m$), then the sequence $\{x^{k}\}$
is bounded, $\lim_{k\rightarrow\infty}\|x^{k}-x^{k+1}\|<\infty,$
and every limit point is a critical point.
\item If we assume $\phi(x)=0$ and
set $\gamma_{i}=L_{i}$, then
\[
\min_{0\le k\le K}\|\nabla f(x^{k})-v^{k}\|^{2}\le4\left(L_{\max}+\tfrac{mL^{2}}{L_{\min}}\right)\tfrac{F(x^{0})-F(x^{*})}{(K+1)}.
\]
\end{enumerate}
\end{theorem}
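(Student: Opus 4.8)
The plan is to mirror the structure of the proof of Theorem \ref{thm:rcd}, with one essential new ingredient: within a single outer iteration the subgradient $v^k\in\partial h(x^k)$ is frozen, while $\nabla f$ is refreshed at every inner step. The backbone is a sufficient-decrease estimate over one full cycle. Fix $k$, set $\tilde x^0=x^k$, and for inner step $t$ let $\xi^{t+1}_{\pi_t}\in\partial\phi_{\pi_t}(\tilde x^{t+1}_{\pi_t})$ be the multiplier from the optimality condition of the proximal mapping \eqref{eq:mapping}, so that $\nabla_{\pi_t}f(\tilde x^t)-v^k_{\pi_t}+\xi^{t+1}_{\pi_t}=-\gamma_{\pi_t}(\tilde x^{t+1}_{\pi_t}-\tilde x^t_{\pi_t})$. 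Combining block-wise smoothness of $f$ on the one-block change $\tilde x^t\to\tilde x^{t+1}$, convexity and block-separability of $\phi$, and this optimality relation gives
\[
(f+\phi)(\tilde x^{t+1})\le (f+\phi)(\tilde x^{t})+\langle v^k_{\pi_t},\tilde x^{t+1}_{\pi_t}-\tilde x^t_{\pi_t}\rangle+\Bigl(\tfrac{L_{\pi_t}}{2}-\gamma_{\pi_t}\Bigr)\|\tilde x^{t+1}_{\pi_t}-\tilde x^t_{\pi_t}\|^2 .
\]
Summing over $t=0,\dots,m-1$ telescopes $f+\phi$, and since each block is visited exactly once, $\sum_t\langle v^k_{\pi_t},\tilde x^{t+1}_{\pi_t}-\tilde x^t_{\pi_t}\rangle=\langle v^k,x^{k+1}-x^k\rangle$; this term is cancelled exactly by $-h(x^{k+1})\le -h(x^k)-\langle v^k,x^{k+1}-x^k\rangle$ from convexity of $h$. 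Hence, when $\gamma_i\ge L_i$ so that $\tfrac{L_{\pi_t}}{2}-\gamma_{\pi_t}\le -\tfrac{\gamma_{\pi_t}}{2}$, I obtain $F(x^{k+1})\le F(x^k)-\tfrac12\tsum_{t=0}^{m-1}\gamma_{\pi_t}\|\tilde x^{t+1}_{\pi_t}-\tilde x^t_{\pi_t}\|^2$, the analogue of \eqref{eq:rbcd-recur}. The point of evaluating $v^k$ only once per cycle is precisely to make this cancellation go through.

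Part 1 then follows routinely. Monotonicity of $F(x^k)$ with level-boundedness gives boundedness of $\{x^k\}$; summing the decrease yields $\tsum_k\|x^{k+1}-x^k\|^2<\infty$, hence $\|x^{k+1}-x^k\|\to0$, and therefore all inner iterates $\tilde x^t$ of cycle $k$ converge to the same limit as $x^k$. For a limit point $\bar x$ along $x^{k_j}\to\bar x$, the sequence $v^{k_j}\in\partial h(x^{k_j})$ is bounded ($h$ continuous convex, $\partial h$ locally bounded), so after a further subsequence $v^{k_j}\to\bar v\in\partial h(\bar x)$. Writing $t_i(k)$ for the step at which block $i$ is updated in cycle $k$, the optimality relation reads $\xi^{k+1}_i=-\nabla_i f(\tilde x^{t_i(k)})+v^k_i-\gamma_i(x^{k+1}_i-x^k_i)$ with $\xi^{k+1}_i\in\partial\phi_i(x^{k+1}_i)$; since $\tilde x^{t_i(k_j)}\to\bar x$, $x^{k_j+1}_i\to\bar x_i$, and $x^{k_j+1}_i-x^{k_j}_i\to0$, closedness of the limiting subdifferential gives $-\nabla_i f(\bar x)+\bar v_i\in\partial\phi_i(\bar x_i)$ for every $i$, i.e. $\bar v\in\nabla f(\bar x)+\partial\phi(\bar x)$, so $\bar x$ is critical. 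The only subtlety is that the gradient in the optimality relation sits at the intermediate point $\tilde x^{t_i(k)}$, not at $x^k$ — harmless since they share the same limit.

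For Part 2, $\phi=0$ and $\gamma_i=L_i$ turn each inner step into the gradient step $\tilde x^{t+1}_{\pi_t}-\tilde x^t_{\pi_t}=-\tfrac{1}{L_{\pi_t}}\br{\nabla_{\pi_t}f(\tilde x^t)-v^k_{\pi_t}}$, so $\|\nabla_i f(\tilde x^{t_i(k)})-v^k_i\|=L_i\|x^{k+1}_i-x^k_i\|$. To pass from this to $\|\nabla f(x^k)-v^k\|^2=\tsum_i\|\nabla_i f(x^k)-v^k_i\|^2$, I split $\|\nabla_i f(x^k)-v^k_i\|\le L_i\|x^{k+1}_i-x^k_i\|+\|\nabla_i f(x^k)-\nabla_i f(\tilde x^{t_i(k)})\|$ and bound the staleness term: because $\tilde x^{t_i(k)}$ differs from $x^k$ only in blocks already visited (on which it agrees with $x^{k+1}$), one has $\|x^k-\tilde x^{t_i(k)}\|\le\|x^{k+1}-x^k\|$, and with $L$ the Lipschitz constant of $\nabla f$, $\|\nabla_i f(x^k)-\nabla_i f(\tilde x^{t_i(k)})\|\le L\|x^{k+1}-x^k\|$. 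Squaring, summing over $i$, using $\tsum_i L_i^2\|x^{k+1}_i-x^k_i\|^2\le L_{\max}\tsum_i L_i\|x^{k+1}_i-x^k_i\|^2$, and plugging in the sufficient-decrease bound $\tsum_i L_i\|x^{k+1}_i-x^k_i\|^2\le 2\br{F(x^k)-F(x^{k+1})}$ together with $\|x^{k+1}-x^k\|^2\le\tfrac1{L_{\min}}\tsum_i L_i\|x^{k+1}_i-x^k_i\|^2$ gives $\|\nabla f(x^k)-v^k\|^2\le 4\br{L_{\max}+\tfrac{mL^2}{L_{\min}}}\br{F(x^k)-F(x^{k+1})}$; telescoping over $k=0,\dots,K$ and using $F(x^{K+1})\ge F(x^*)$ closes the estimate. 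The hard part is exactly this last piece — bookkeeping the gradient-staleness error so that only a single power of $m$ and the ratio $L^2/L_{\min}$ appear — and the observation that unlocks it is that the distance from $x^k$ to any intermediate iterate of the cycle is controlled by $\|x^{k+1}-x^k\|$, which the per-cycle decrease already bounds.
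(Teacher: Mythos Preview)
Your proposal is correct and follows essentially the same approach as the paper: the per-cycle sufficient-decrease estimate, the limit-point argument via optimality conditions and closedness of the subdifferential, and the Part~2 gradient-staleness bound all match the paper's structure and constants. The only cosmetic differences are that you handle the limit-point argument block-by-block (the paper introduces a full-dimensional auxiliary prox $y^{t+1}$ and works at $t=m-1$) and that you bound $\|x^k-\tilde x^{t_i(k)}\|\le\|x^{k+1}-x^k\|$ directly rather than via the partial sum $\sum_{s<t}$; both shortcuts are valid and lead to the identical final bound.
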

\begin{proof}
First, using the optimality condition, there exists $\xi^{k+1}\in\partial\phi(\wtil x^{t+1})$
such that
\begin{equation}
\nabla_{\pi_{t}}f(x^{k})-v_{\pt}^{k}+\xi_{\pt}^{k+1}=\gamma_{\pi_{t}}\bigl(\wtil x_{\pt}^{t}-\wtil x_{\pt}^{t+1}\bigr).\label{eq:optimality-bcd-update-1}
\end{equation}
For the $k$-th subproblem, let $\wtil F(x)=f(x)+\phi(x)-\left\langle v^{k},x-x^{k}\right\rangle -h(x^{k})$
be the surrogate function. Due to the convexity of $\wtil F(x)$,
we obtain
\[
\wtil F(x)=f(x)+\phi(x)-\left\langle v^{k},x-x^{k}\right\rangle -h(x^{k})\ge f(x)+\phi(x)-h(x)=F(x),\quad\forall x.
\]
This bound is tight at $x^{k}$: $\wtil F(x^{k})=F(x^{k})$. Next
we develop some relation about the surrogate functions. We have 
\begin{align*}
\wtil F(\wtil x^{t+1}) & \le f(\wtil x^{t})+\langle \nabla_{\pi_{t}}f\br{\wtil x^{t}},\wtil x_{\pt}^{t+1}-\wtil x_{\pt}^{t}\rangle +\tfrac{L_{\pi_{t}}}{2}\|\wtil x_{\pt}^{t+1}-\wtil x_{\pt}^{t}\|_{\pt}^{2} \\
 & \quad -h(x^{k})-\langle v^{k},\wtil x^{t+1}-x^{k}\rangle +\phi\br{\wtil x^{t+1}}\\
 & \le f(\wtil x^{t})-\gamma_{\pi_{t}}\langle \wtil x_{\pt}^{t+1}-\wtil x_{\pt}^{t},\wtil x_{\pt}^{t+1}-\wtil x_{\pt}^{t}\rangle +\tfrac{L_{\pi_{t}}}{2}\|\wtil x_{\pt}^{t+1}-\wtil x_{\pt}^{t}\|_{\pt}^{2}\\
 & \quad\langle v_{\pt}^{k}-\xi_{\pt}^{k+1},\wtil x_{\pt}^{t+1}-\wtil x_{\pt}^{t}\rangle -h(x^{k})-\langle v^{k},\wtil x^{t+1}-x^{k}\rangle +\phi\br{\wtil x^{t+1}}\\
 & \le f(\wtil x^{t})+\br{\tfrac{L_{\pi_{t}}}{2}-\gamma_{\pi_{t}}}\|\wtil x_{\pt}^{t+1}-\wtil x_{\pt}^{t}\|_{\pt}^{2}-h(x^{k})-\langle v^{k},\wtil x^{t}-x^{k}\rangle \\
 & \quad +\phi\br{\wtil x^{t+1}}+\langle\xi_{\pt}^{k+1},\wtil x_{\pt}^{t}-\wtil x_{\pt}^{t+1}\rangle\\
 & \le\wtil F(\wtil x^{t})+\br{\tfrac{L_{\pi_{t}}}{2}-\gamma_{\pi_{t}}}\|\wtil x_{\pt}^{t+1}-\wtil x_{\pt}^{t}\|_{\pt}^{2}\\
 & \le\wtil F(\wtil x^{t})-\tfrac{L_{\pi_{t}}}{2}\|x_{\pt}^{k+1}-x_{\pt}^{k}\|_{\pt}^{2},
\end{align*}
where the last inequality uses the fact that $\|\wtil x_{\pt}^{t+1}-\wtil x_{\pt}^{t}\|_{\pt}^{2}=\|x_{\pt}^{k+1}-x_{\pt}^{k}\|_{\pt}^{2}$
and $\gamma_{i}-\frac{L_{i}}{2}\ge\frac{L_{i}}{2}$. Summing up the
above relation over $t=0,1,...,m-1$, we have
\begin{align}
\tfrac{1}{2}\norm{x^{k+1}-x^{k}}_{[1]}^{2} & \le\tsum_{t=0}^{m-1}\left[\wtil F(\wtil x^{t})-\wtil F(\wtil x^{t+1})\right] =\wtil F(x^{0})-\wtil F(x^{k+1}) \nonumber \\
   & \le F(x^{k})-F(x^{k+1}).\label{rpcd:square-dist-bound}
\end{align}
Therefore, $\{F(x^{k})\}$ is a non-increasing sequence and $\lim_{k\raw\infty}F(x^{k})$
exists. From the bounded level set assumption, we immediately have
that the sequence $\{x^{k}\}$ is bounded. Summing up the above inequality,
we obtain 
\begin{equation}
\tfrac{1}{2}\tsum_{k=0}^{K}\|x^{k+1}-x^{k}\|_{[1]}^{2}\le F(x^{0})-F(x^{K+1})\le F(x^{0})-F(x^{*})<+\infty.\label{rpcd:sum-square-dist-bound}
\end{equation}
Therefore, we have $\lim_{k\rightarrow\infty}\|x^{k}-x^{k+1}\|_{[1]}=0.$

Let $\bar{x}$ be a limiting point of $\left\{ x^{k}\right\} $. Passing
to a subsequence if necessary, we have $\lim_{k\rightarrow\infty}x^{k}=\bar{x}$.
Let us denote $y^{t+1}=\argmin_{x}\Bigl\{\bigl\langle\nabla f(\wtil x^{t})-v^{k},x\bigr\rangle+\phi(x)+\sum_{i=1}^{m}\frac{\gamma_{i}}{2}\|x_{\i}-\wtil x_{\i}^{t}\|_{\i}^{2}\Bigr\}.$
By this definition,  for $t=0,1,...,m-1$ and any $x$, we have
\begin{equation}
\phi(y^{t+1})\le\phi(x)+\bigl\langle\nabla f(\wtil x^{t})-v^{k},x-y^{t+1}\bigr\rangle+\tsum_{i=1}^{m}\tfrac{\gamma_{i}}{2}\|x_{\i}-\wtil x_{\i}^{t}\|_{\i}^{2}-\tsum_{i=1}^{m}\tfrac{\gamma_{i}}{2}\|y_{\i}^{t+1}-\wtil x_{\i}^{t}\|_{\i}^{2}.\label{rpcd:ytilde-bound}
\end{equation}
Notice that $y^{m}=x^{k+1}$. Taking $x=\bar{x}$, $t=m-1$ and letting
$k\raw\infty$ in (\ref{rpcd:ytilde-bound}), we have $\limsup_{k\raw\infty}\phi(x^{k+1})\le\phi(\bar{x})$.
According to the lower semicontinuity of $\phi(\cdot)$, we have $\lim_{k\raw\infty}\phi(x^{k+1})=\phi(\bar{x})$.
In addition, due to the optimality condition, we have 
\begin{equation}
\nabla f(\wtil x^{t})-v^{k}+u^{t+1}+\tsum_{i=1}^{m}\gamma_{i}\Ubf_{i}(y_{\i}^{t+1}-\wtil x_{\i}^{t})=0,\quad t=0,1,...,m-1,\label{rpcd:ytilde-opt}
\end{equation}
where $u^{t+1}\in\partial\phi(y^{t+1})$. Taking $t=m-1$ and letting
$k\rightarrow\infty$ in (\ref{rpcd:ytilde-opt}), since $\lim_{k\rightarrow\infty}y^{m}-\wtil x^{m-1}=0$,
we have $\lim_{k\raw\infty}\nabla f(\wtil x^{m-1})-v^{k}+u^{m}=0$,
where $u^{m}\in\partial\phi(y^{m})=\partial\phi(x^{k+1})$. By continuity
of $\nabla f(\cdot)$ we have $\lim_{k\rightarrow\infty}\nabla f(\wtil x^{m-1})=\nabla f(\bar{x})$.
Since $x^{k}$ is bounded, $v^{k}\in\partial h(x^{k})$ is also bounded.
Passing to a subsequence if necessary, we have $\lim_{k\raw\infty}v^{k}=\bar{v}$
and therefore $\lim_{k\raw\infty}u^{m}=\bar{v}-\nabla f(\bar{x})$.
Since $x^{k}\overset{f}{\rightarrow}\bar{x}$, $x^{k+1}\overset{\phi}{\rightarrow}\bar{x}$,
we have $\bar{v}\in\partial h(\bar{x})$ and $\bar{u}\in\partial\phi(\bar{x})$
based on graph continuity of limiting subdifferentials. Therefore, we have $\bar{v}\in\nabla f(\bar{x})+\partial\phi(\bar{x})$
and we conclude that $\bar{x}$ is a critical point.

For the second part, assume that $\phi(x)=0$ and $\gamma_{i}=L_{i}$.
From the smoothness of $f(x)$, we have
\[
\|\nabla_{\pi_{t}}f(\wtil x^{t})-\nabla_{\pi_{t}}f(x^{k})\|_{\pt}^{2}\le\|\nabla f(\wtil x^{t})-\nabla f(x^{k})\|^{2}\le L^{2}\|\wtil x^{t}-x^{k}\|^{2},\quad t=0,1,2,...,m.
\]
For any vector $a,b$ we have the inequality
\[
\norm a^{2}\le\br{\norm b+\norm{a-b}}^{2}\le2\norm b^{2}+2\norm{a-b}^{2}.
\]
Hence we can bound the squared subgradient norm by:
\begin{align*}
\|\nabla_{\pi_{t}}f(x^{k})-v_{\pt}^{k}\|_{\pt}^{2} & \le2\|\nabla_{\pi_{t}}f(\wtil x^{t})-v_{\pt}^{k}\|_{\pi_{t}}^{2}+2\|\nabla_{\pi_{t}}f(\wtil x^{t})-\nabla_{\pi_{t}}f(x^{k})\|_{\pt}^{2}\\
 & \le2\|\nabla_{\pi_{t}}f(\wtil x^{t})-v_{\pt}^{k}\|_{\pt}^{2}+2L^{2}\|\wtil x^{t}-x^{k}\|^{2}.
\end{align*}
Summing up the above relation over $\pi_{0},\pi_{1},\pi_{2},\ldots\pi_{m-1}$,
we have 
\begin{align}
\|\nabla f(x^{k})-v^{k}\|^{2} & \le\tsum_{t=0}^{m-1}\left[2\|\nabla_{\pi_{t}}f(\wtil x^{t})-v_{\pt}^{k}\|_{\pt}^{2}+2L^{2}\|\wtil x^{t}-x^{k}\|^{2}\right]\nonumber \\
 & =\tsum_{t=0}^{m-1}\left[2L_{\pi_{t}}^{2}\|\wtil x_{\pi_{t}}^{t+1}-\wtil x_{\pi_{t}}^{t}\|_{\pt}^{2}+2L^{2}\|\wtil x^{t}-x^{k}\|^{2}\right]\nonumber \\
 & \le\tsum_{t=0}^{m-1}\left[2L_{\pi_{t}}^{2}\|\wtil x_{\pt}^{t+1}-\wtil x_{\pt}^{t}\|_{\pt}^{2}+2\tfrac{L^{2}}{L_{\min}}\tsum_{s=0}^{t-1}L_{\pi_{s}}\|\wtil x_{\pi_{s}}^{s+1}-\wtil x_{\pi_{s}}^{s}\|_{s}^{2}\right]\nonumber \\
 & \le2\Bigl(L_{\max}+\tfrac{mL^{2}}{L_{\min}}\Bigr)\tsum_{t=0}^{m-1}L_{\pi_{t}}\|\wtil x_{\pt}^{t+1}-\wtil x_{\pt}^{t}\|_{\pt}^{2} \nonumber \\
 & \le2\Bigl(L_{\max}+\tfrac{mL^{2}}{L_{\min}}\Bigr)\norm{x^{k+1}-x^{k}}_{[1]}^{2}.\label{cyclic-sq-grad}
\end{align}
Here, the first equality is due to the equality $\nabla_{\pi_{t}}f(\wtil x^{t})-v_{\pi_{t}}^{t}=\gamma_{\pi_{t}}\left(\wtil x_{\pt}^{t}-\wtil x_{\pt}^{t+1}\right)$, and the second inequality is due to the
block Lipschitz smoothness. Putting (\ref{rpcd:sum-square-dist-bound})
and (\ref{cyclic-sq-grad}) together, we obtain
\begin{equation*}
\tsum_{k=0}^{K}\norm{\nabla f(x^{k})-v^{k}}^{2}  \le4\Bigl(L_{\max}+\tfrac{mL^{2}}{L_{\min}}\Bigr)\left[F(x^{0})-F(x^{*})\right].
\end{equation*}
\end{proof}

\begin{remark}
Although the complexity rate of RPCD has a larger multiplicative constant
than that of RCSD, RPCD and RCSD are based on substantially different
optimality measures: the rate of RPCD is deterministic and the rate
of RCSD is on expectation. Nevertheless, in our experiments (presented
in Section \ref{sec:Experiments}) , we observe that RPCD and RCSD
have very similar convergence performance. We note that similar observations
(\cite{RN18}) have been made when comparing RCD and RBGD in convex
setting.
\end{remark}

\section{Randomized Proximal DC method For Nonconvex Optimization\label{sec:acd-dc}}

In this section we will develop a new randomized proximal DC algorithm
based on ACD. Our method is based on the simple observation that $F(x)$
can be reformulated as a difference-of-convex function. Specifically,
suppose that $f(x)$ has a bounded negative curvature ($\mu$-weakly
convex):
\[
f(x)\ge f(y)+\left\langle \nabla f(y),x-y\right\rangle -\frac{\mu}{2}\|x-y\|^{2}.
\]
By this definition, $f(x)+\frac{\mu}{2}\|x\|^{2}$ is convex and hence
$f(x)$ can be expressed as a difference-of-convex function: $f(x)=\Bigl(f(x)+\frac{\mu}{2}\|x\|^{2}\Bigr)-\frac{\mu}{2}\|x\|^{2}$.
Therefore, we express $F(x)$ in the following DC form: 
\[
F(x)=f(x)+\frac{\mu}{2}\|x\|^{2}+\phi(x)-\Bigl(\frac{\mu}{2}\|x\|^{2}+h(x)\Bigr).
\]
For simplicity, we assume that $f(x)$ is convex for the remainder
of this section.

\begin{algorithm2e}
\KwIn{$x^{0}$\;}

\For{k=0,1,2,...K}{

Compute $v^{k}\in\partial h(x^{k})$ \;

Set
\begin{equation}
x^{k+1}=\argmin_{x}\wtil F(x)=f(x)+\phi(x)-h(x^{k})-\left\langle v^{k},x-x^{k}\right\rangle .\label{eq:dc-update}
\end{equation}

}

\caption{The DC algorithm\label{alg:DC}}
\end{algorithm2e}

There is a wealth of literature on DC optimization. For simplicity,
we summarize the most general DC algorithm (DCA) in Algorithm \ref{alg:DC}.
This approach is an iterative procedure that alternates between linearizing
the concave part ($-h(\cdot)\le-h(x^{k})-\left\langle v^{k},\cdot-x^{k}\right\rangle $)
and minimizing the majorized function ($\wtil F(\cdot)$) to specific
accuracy. To handle the subproblem (\ref{eq:dc-update}) more efficiently,
DCA often employs some external solvers, such as gradient
methods and interior point methods to obtain high-precision solutions.
However, the main drawback of this approach is that exactly minimizing
$\wtil F(x)$ can be potentially slow for many large-scale problems.
To avoid this difficulty, recent works (such as \cite{wen2018proximal,gotoh2018dc,RN340})
propose using a proximal DC algorithm (pDCA) by performing one step
of proximal gradient descent to solve (\ref{eq:dc-update}). pDCA
can be interpreted as an application of Algorithm \ref{alg:DC}  based on a different DC representation:
$
F(x)=\left[\frac{L}{2}\|x\|^{2}+\phi(x)\right]-\left[h(x)+\frac{L}{2}\|x\|^{2}-f(x)\right].
$
 It follows from the DC algorithm that we obtain $x^{k+1}$ by
\begin{align*}
x^{k+1} & =\argmin_{x}\Bigl\{\frac{L}{2}\|x\|^{2}+\phi(x)-\bigl\langle v^{k}+Lx^{k}-\nabla f(x^{k}),x\bigr\rangle\Bigr\}\\
 & =\argmin_{x}\Bigl\{\bigl\langle\nabla f(x^{k})-v^{k},x\bigr\rangle+\phi(x)+\frac{L}{2}\|x-x^{k}\|^{2}\Bigr\}.
\end{align*}
Computing the above proximal mapping can be much easier than solving
(\ref{eq:dc-update}), provided that the function $\phi(x)$ has specific
simple form. For example, when $\phi(x)$ is the lasso or elastic-net
penalty, $x^{k+1}$ 	is computed by the so-called soft-thresholding.
While pDCA offers significant improvements on the per-iteration computational
time for convergence to approximate critical point solutions, it appears
that pDCA does not efficiently exploit the convex structure of $f(x)+\phi(x)$.
For example, consider the extreme case that $h(x)=0$, then pDCA is
exactly the proximal gradient descent for minimizing convex composite
function. However, in such case, it is well known that proximal gradient
descent has suboptimal worst-case complexity and the optimal complexity
is achieved by Nesterov's accelerated methods.

\begin{algorithm2e}
\KwIn{$x_{0}$, $\mu$, $t$;}

Compute $\wtil{\mu}$\;

\For{k=0,1,2,...K}{

Compute $v^{k}\in\partial h(x^{k})$ \;

Set 
\begin{equation}
F_{k}(x)=f(x)+\phi(x)-h(x^{k})-\left\langle v^{k},x-x^{k}\right\rangle +\frac{\mu}{2}\|x-x^{k}\|_{[1]}^{2}\label{eq:Fk-pdc}
\end{equation}

Obtain $x^{k+1}$ from running Algorithm \ref{alg:APCG} with input
$F_{k}(\cdot)$, $x^{k}$ , $\wtil{\mu}$ and $t$\;

}

\caption{ACPDC\label{alg:acpdc}}
\end{algorithm2e}

To overcome these drawbacks, in Algorithm \ref{alg:acpdc}, we propose
a new randomized proximal DC method (ACPDC) by using ACD efficiently
for the DC subproblem. This algorithm is based on the idea that, at
the $k$-th iteration, we first form a majorized function $F_{k}(x)$
by linearizing $h(x)$ and adding a strongly convex function 
\[
\frac{\mu}{2}\|x-x^{k}\|_{[1]}^{2}=\frac{\mu}{2}\tsum_{i=1}^m L_{i}\|x_{\i}-x_{\i}^{k}\|_{i}^{2},
\]
and we then apply ACD to approximately minimize $F_{k}(x)$ to specific
accuracy. As we will show later, there is no need to obtain high precision
for the subproblems, because running a few iterations (order $O(m)$)
of ACD is sufficient to guarantee fast convergence. Using ACD allows
us to exploit the convex structure of $f(x)$ more efficiently than
by using proximal gradient descent, while at the same time retaining
the same $O\bigl(\frac{1}{\epsilon}\bigr)$ iteration complexity.
We confirm the empirical advantage of using ACDPC in selected real
applications in Section \ref{sec:Applications}.

\subsection*{The convex subproblem}

Before developing the main convergence result of ACPDC, let us discuss
the convex subproblem in Algorithm \ref{alg:acpdc} and examine its
complexity. This subproblem to minimize $F_{k}(x)$ can be described
in the following general form:
\begin{equation}
\min_{x}\wtil F(x)=\wtil f(x)+\phi(x)+\frac{\mu}{2}\|x-\bar{x}\|_{[1]}^{2},\label{cvx-subprob}
\end{equation}
where $\tilde{f}(x)$ is convex and continuously differentiable. In
the context of (\ref{eq:Fk-pdc}), $\wtil F(\cdot)$ has the following
properties:
\begin{enumerate}
\item $\wtil f(x)+\frac{\mu}{2}\|x-\bar{x}\|_{[1]}^{2}$ is block-wise Lipschitz
smooth with constant $\wtil L_{i}=(1+\mu)L_{i}$, $i=1,2,...,m$.
\item $\wtil F(\cdot)$ is $\wtil{\mu}$-strongly convex w.r.t. $\|\cdot\|_{\tone}$,
where $\wtil{\mu}=\frac{\mu}{1+\mu}$ and the norm is defined by $\norm x_{\tone}=\sqrt{\tsum_{i=1}^{m}\wtil L_{i}\norm{x_{\i}}_{\i}^{2}}$.
\end{enumerate}
In order to approach this problem efficiently, we adopt the accelerated
randomized proximal coordinate gradient method (APCG, \cite{lin2015an})
and express it in Algorithm \ref{alg:APCG}; APCG is a specific ACD
method for strongly convex and composite optimization. We refer to
\cite{lin2015an} for the complete convergence analysis of APCG, and
we rephrase one important result in the following theorem.
\begin{algorithm2e}
\KwIn{$\wtil F(x)$, $x^{0}$, $\wtil{\mu}$, $K$;}

Initialize $z^{0}=x^{0}$ and choose $\gamma_{0}\in[\wtil{\mu},1]$\;

\For{k=0,1,2,...,K-1}{

Compute $\alpha_{k}\in(0,\frac{1}{m}]$ from $m^{2}\alpha_{k}^{2}=(1-\alpha_{k})\gamma_{k}+\alpha_{k}\mu$ and
set $\gamma_{k+1}=(1-\alpha_{k})\gamma_{k}+\alpha_{k}\wtil{\mu}$,
$\beta_{k}=\frac{\alpha_{k}\mu}{\gamma_{k+1}}$\;

Set $y^{k}=\tfrac{1}{\alpha_{k}\gamma_{k}+\gamma_{k+1}}(\alpha_{k}\gamma_{k}x^{k}+\gamma_{k+1}z^{k})$\;

Sample $i_{k}$ uniformly at random from $\{1,2,...,m\}$ and update
\[
z^{k+1}=\argmin_{x}\{\tfrac{m\alpha_{k}}{2}\|x-(1-\beta_{k})z^{k}-\beta_{k}y^{k}\|_{[1]}^{2}+\langle\nabla_{i_{k}}f(y^{k}),x_{\ik}\rangle+\phi_{i_{k}}(x_{\ik})\};
\]

Set $x^{k+1}=y^{k}+m\alpha_{k}(z^{k+1}-z^{k})+\frac{\wtil{\mu}}{m}(z^{k}-y^{k})$\;

}

\caption{APCG\label{alg:APCG}}
\end{algorithm2e}

\begin{theorem}
\label{thm:apcg}Let $x^{*}$ be the optimal solution of Problem (\ref{cvx-subprob})
and assume that $\wtil F(x)$ is $\wtil{\mu}$-strongly convex with
$\norm{\cdot}_{\wtil{[1]}}$. In Algorithm \ref{alg:APCG}, we have
\[
\Ebb[\wtil F(x^{K})-\wtil F(x^{*})]\le\left(1-\tfrac{\sqrt{\wtil{\mu}}}{m}\right)^{K}\left(\wtil F(x^{0})-\wtil F(x^{*})+\tfrac{\wtil{\mu}}{2}\|x^{0}-x^{*}\|_{\wtil{[1]}}^{2}\right).
\]
\end{theorem}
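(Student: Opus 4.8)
The plan is to prove Theorem~\ref{thm:apcg} by the estimate-sequence (Lyapunov-function) argument that is standard for accelerated coordinate descent, following \cite{lin2015an}. I would track a potential of the form
\[
\Phi_k \;=\; \wtil F(x^k)-\wtil F(x^*)\;+\;\tfrac{\gamma_{k}}{2}\,\norm{z^k-x^*}_{\tone}^2 ,
\]
and establish the one-step relation $\Ebb_{i_k}[\Phi_{k+1}]\le(1-\alpha_k)\,\Phi_k$. Telescoping this over $k=0,\dots,K-1$ and verifying that $\alpha_k\ge\sqrt{\wtil\mu}/m$ for all $k$ then gives the claimed bound, the initial potential being controlled by taking $\gamma_0=\wtil\mu$ (admissible since $\gamma_0\in[\wtil\mu,1]$) together with $z^0=x^0$.

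The first step is to analyze the $z$-update. Since $z^{k+1}$ minimizes a $(m\alpha_k)$-strongly convex proximal problem (in $\norm{\cdot}_{\tone}$) restricted to block $i_k$, I would apply Lemma~\ref{lem:optimality-prox-map} block-wise to obtain, for any $x$, a three-point inequality bounding $\langle\nabla_{i_k}\wtil f(y^k),\,z^{k+1}_{i_k}-x_{i_k}\rangle+\phi_{i_k}(z^{k+1}_{i_k})-\phi_{i_k}(x_{i_k})$ by $\tfrac{m\alpha_k}{2}\bigl[\norm{\wh z^k-x}_{\tone}^2-\norm{z^{k+1}-x}_{\tone}^2-\norm{\wh z^k-z^{k+1}}_{\tone}^2\bigr]$, where $\wh z^k=(1-\beta_k)z^k+\beta_k y^k$ is the prox center. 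Taking $\Ebb_{i_k}$ turns $\nabla_{i_k}\wtil f(y^k)$ into $\tfrac1m\nabla\wtil f(y^k)$ and, by separability of $\phi$, converts the block terms into their full-dimensional counterparts; this is where the main care is needed, since $z^{k+1}$ and $z^k$ differ in a single block, so $\Ebb_{i_k}[\phi(z^{k+1})]$ and $\Ebb_{i_k}\norm{z^{k+1}-z^k}_{\tone}^2$ must be re-expressed in terms of a full proximal step via an auxiliary point and the convexity of $\phi$. Next I would invoke block Lipschitz smoothness of $\wtil f+\tfrac{\mu}{2}\norm{\cdot-\bar x}_{[1]}^2$ (constants $\wtil L_i=(1+\mu)L_i$) to get a descent estimate for $\wtil F(x^{k+1})$ in terms of $\wtil f(y^k)$ and the increment $x^{k+1}-y^k=m\alpha_k(z^{k+1}-z^k)+\tfrac{\wtil\mu}{m}(z^k-y^k)$; the balance $m^2\alpha_k^2=(1-\alpha_k)\gamma_k+\alpha_k\wtil\mu$ is exactly what makes the quadratic increment term cancel against the $\norm{\wh z^k-z^{k+1}}^2$ term from the three-point inequality. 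Combining these with convexity of $\wtil f$ (to pass from $\wtil f(y^k)$ to the $\alpha_k$/$(1-\alpha_k)$-weighted combination of $\wtil f(x^k)$ and $\wtil f(x^*)$), the definition $y^k=\tfrac{1}{\alpha_k\gamma_k+\gamma_{k+1}}(\alpha_k\gamma_k x^k+\gamma_{k+1}z^k)$, and the $\wtil\mu$-strong convexity of $\wtil F$ (to absorb the residual $\norm{z^k-x^*}^2$ into $\Phi_k$), I would assemble the one-step recursion $\Ebb_{i_k}[\Phi_{k+1}]\le(1-\alpha_k)\Phi_k$.

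Finally, a short induction on the parameter recursion $\gamma_{k+1}=(1-\alpha_k)\gamma_k+\alpha_k\wtil\mu$ and $m^2\alpha_k^2=(1-\alpha_k)\gamma_k+\alpha_k\wtil\mu$ shows $\gamma_k\ge\wtil\mu$ for all $k$, hence $m^2\alpha_k^2\ge\wtil\mu$ and $\alpha_k\ge\sqrt{\wtil\mu}/m$; consequently $\prod_{j=0}^{K-1}(1-\alpha_j)\le(1-\sqrt{\wtil\mu}/m)^K$. Unwinding the telescoped recursion and bounding $\Phi_0=\wtil F(x^0)-\wtil F(x^*)+\tfrac{\gamma_0}{2}\norm{x^0-x^*}_{\tone}^2\le \wtil F(x^0)-\wtil F(x^*)+\tfrac{\wtil\mu}{2}\norm{x^0-x^*}_{\tone}^2$ yields the theorem. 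The main obstacle I anticipate is not any single estimate but the coordinate-randomization bookkeeping for the nonsmooth term $\phi$ described above, combined with carrying the algebra consistently through the three interleaved sequences $\{x^k\},\{y^k\},\{z^k\}$ and the parameters $\alpha_k,\gamma_k,\beta_k$; since the full argument is carried out in \cite{lin2015an}, for the present paper it suffices to invoke that reference.
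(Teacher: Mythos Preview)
Your proposal is correct and aligns with the paper's own treatment: the paper does not give an independent proof of Theorem~\ref{thm:apcg} but simply invokes \cite{lin2015an}, exactly as you conclude. Your sketch of the Lyapunov/estimate-sequence argument is faithful to that reference and in fact supplies more detail than the paper itself, which only rephrases the result.
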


\subsection*{Analysis of ACPDC}

To develop the complexity result of the overall procedure, we need
to define some terminating criterion. %
Let us denote 
\begin{equation}
F_{\mu}(y,x,v)=f(y)+\phi(y)-h(x)-\left\langle v,y-x\right\rangle +\frac{\mu}{2}\|y-x\|_{[1]}^{2},\quad v\in\partial h(x),\label{eq:F-mu-surrogate}
\end{equation}
and define $\bar{x}=\argmin_{y}F_{\mu}(y,x,v)$. We define the prox-mapping
$$p(x,v,\mu)=\mu\tsum_{i=1}^{m}\left[L_{i}\Ubf_{i}(x_{i}-\bar{x}_{i})\right].$$
Based on the definition,  $x$ is exactly a critical
point when $\|p(x,v,\mu)\|_{[1]}=0$, and the norm of $\|p(x,v,\mu)\|_{[1]}$
can be viewed as a measure of the optimality of $x$. Moreover, notice
that the norm $\|p(x,v,\mu)\|_{[1]}$ is also related to the accuracy
of $x$ for minimizing $F_{\mu}(\cdot,x,v)$. Assume that $F(x)=F_{\mu}(x,x,v)\le F_{\mu}(\bar{x},x,v)+\epsilon$,
then we have
\[
F_{\mu}(x,x,v)\ge\frac{\mu}{2}\|x-\bar{x}\|_{[1]}^{2}+F_{\mu}(\bar{x},x,v)\ge\frac{\mu}{2}\|x-\bar{x}\|_{[1]}^{2}+F_{\mu}(x,x,v)-\epsilon,
\]
where the first inequality is due to the strong convexity of $F_{\mu}(\cdot,x,v)$
and the optimality of $\bar{x}$. Therefore, we conclude
\begin{equation}
\frac{1}{2\mu}\|p(x,v,\mu)\|_{[1],*}^{2}=\frac{\mu}{2}\|x-\bar{x}\|_{[1]}^{2}\le\epsilon.\label{eq:prox-bound-eps}
\end{equation}

We next justify the usage of $\|p(x,v,\mu)\|_{[1],*}$ by showing
that this criterion is quantitatively equivalent to the earlier proposed
criterion of using composite subgradient at $x$. Here, the equivalence
means that the two values are different up to some constant factors. Before
making more rigorous argument, we first simplify notations. Throughout
this section we denote the composite mapping as $\wtil x=\argmin_{y}F^{\gamma}(y,x,v)$
($\gamma>0$) where we define
\begin{equation}
F^{\gamma}(y,x,v)=\bigl\langle\nabla f(x)-v,y-x\bigr\rangle+\phi(y)+\frac{\gamma}{2}\|y-x\|_{[1]}^{2},\quad v\in\partial h(x).\label{eq:F-gm-surrogate}
\end{equation}
In this way, the earlier proposed stepsize parameter $\gamma_{i}$
in Algorithm \ref{alg:rcsd} takes the form $\gamma_{i}=\gamma L_{i}$.
We still denote the composite subgradient by $g(x,\nabla f(x)-v,\gamma)$,
slightly abusing the notation when there is no ambiguity in the context.
Then we quantify the relations between these two criteria in the following
theorem.

\begin{theorem}
Let $\gamma\in[\mu,3\mu)$, then we have 
\begin{align}
\|g(x,\nabla f(x)-v,\gamma)\|_{[1],*} & \le\left(1+\tfrac{L}{\mu L_{\min}}\right)\left(1+\sqrt{\tfrac{L}{2L_{\min}\mu+L}}\right)\|p(x,v,\mu)\|_{[1],*},\label{prop:bound-p-bigger-g}\\
\|g(x,\nabla f(x)-v,\gamma)\|_{[1],*} & \ge\frac{\gamma}{\mu}\Bigl(1+\sqrt{\tfrac{\gamma-\mu+L/L_{\min}}{3\mu-\gamma}}\Bigr)^{-1}\|p(x,v,\mu)\|_{[1],*}.\label{prop:bound-g-bigger-p}
\end{align}
\end{theorem}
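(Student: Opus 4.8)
The plan is to rephrase both estimates as a purely scalar two–sided comparison of the distances between the three relevant points: $x$, the composite mapping $\tilde x:=\argmin_y F^\gamma(y,x,v)$ from (\ref{eq:F-gm-surrogate}), and the prox point $\bar x:=\argmin_y F_\mu(y,x,v)$ from (\ref{eq:F-mu-surrogate}). First I would write the first–order optimality conditions for both minimizers: there exist $\xi\in\partial\phi(\tilde x)$ and $\eta\in\partial\phi(\bar x)$ with $\nabla f(x)-v+\xi=-\gamma\sum_{i=1}^m L_i\Ubf_i(\tilde x_i-x_i)$ and $\nabla f(\bar x)-v+\eta=-\mu\sum_{i=1}^m L_i\Ubf_i(\bar x_i-x_i)$. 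By the definitions of $g$ (recalling $\gamma_i=\gamma L_i$) and of $p(x,v,\mu)$, these read $g(x,\nabla f(x)-v,\gamma)=\gamma\sum_i L_i\Ubf_i(x_i-\tilde x_i)$ and $p(x,v,\mu)=\mu\sum_i L_i\Ubf_i(x_i-\bar x_i)$, so that $\|g(x,\nabla f(x)-v,\gamma)\|_{[1],*}=\gamma\|x-\tilde x\|_{[1]}$ and $\|p(x,v,\mu)\|_{[1],*}=\mu\|x-\bar x\|_{[1]}$. Writing $U:=\|x-\tilde x\|_{[1]}$, $W:=\|x-\bar x\|_{[1]}$, $D:=\|\tilde x-\bar x\|_{[1]}$, and $\langle a,b\rangle_{[1]}:=\sum_i L_i\langle a_i,b_i\rangle_i$ (so $\langle a,a\rangle_{[1]}=\|a\|_{[1]}^2$), the theorem becomes exactly a two–sided bound between $\gamma U$ and $\mu W$.

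Next I would extract two scalar inequalities among $U,W,D$. The first uses monotonicity of $\partial\phi$: subtracting the two optimality inclusions gives $\langle g-p,\tilde x-\bar x\rangle\ge\langle\nabla f(x)-\nabla f(\bar x),\tilde x-\bar x\rangle$; expanding the left side in $\langle\cdot,\cdot\rangle_{[1]}$ and using $\tilde x-\bar x=(x-\bar x)-(x-\tilde x)$ gives $(\gamma+\mu)\langle x-\tilde x,x-\bar x\rangle_{[1]}-\gamma U^2-\mu W^2$, while on the right I discard the nonnegative term $\langle\nabla f(x)-\nabla f(\bar x),x-\bar x\rangle$ (convexity of $f$) and bound the remaining term by $\|\nabla f(x)-\nabla f(\bar x)\|_{[1],*}\,U\le\tfrac{L}{L_{\min}}WU$ (using $\|\cdot\|_{[1]}^2\ge L_{\min}\|\cdot\|^2$ and $L$-Lipschitzness of $\nabla f$). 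Cauchy--Schwarz on $\langle x-\tilde x,x-\bar x\rangle_{[1]}$ then produces a homogeneous quadratic inequality in $(U,W)$. The second inequality comes from the surrogate gaps: apply Lemma \ref{lem:optimality-prox-map} to $F^\gamma(\cdot,x,v)$ at test point $\bar x$, apply $\mu$-strong convexity of $F_\mu(\cdot,x,v)$ at test point $\tilde x$, and subtract; the $\phi$-terms and the $v$-linear terms cancel, leaving on the left the Bregman quantity $D_f(\tilde x,x)-D_f(\bar x,x)$ with $D_f(y,x):=f(y)-f(x)-\langle\nabla f(x),y-x\rangle$, and on the right a quadratic form in $U,W,D$. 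Bounding $0\le D_f(\bar x,x)$ and $D_f(\tilde x,x)\le\tfrac{L}{2L_{\min}}U^2$ (descent lemma plus the same norm comparison) turns this into a quadratic relation linking $D$ to $U$ and $W$.

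Finally I would combine these two quadratics with the triangle inequalities $U\le W+D$ and $W\le U+D$ and with the hypothesis $\mu\le\gamma<3\mu$, solving the resulting one–variable quadratics for $U/W$ and for $W/U$ and reading off the two branches: the upper branch yields (\ref{prop:bound-p-bigger-g}) and the lower branch yields (\ref{prop:bound-g-bigger-p}). The main obstacle is the bookkeeping that makes the constants come out exactly as stated: the $L/L_{\min}$ factors are artifacts of passing between $\|\cdot\|$, $\|\cdot\|_{[1]}$ and $\|\cdot\|_{[1],*}$; the square roots are the discriminants of the quadratics above; and both the disappearance of $\gamma$ from the right side of (\ref{prop:bound-p-bigger-g}) and the appearance of the denominator $3\mu-\gamma$ in (\ref{prop:bound-g-bigger-p}) hinge on invoking $\gamma<3\mu$ at precisely the step where $D$ is eliminated. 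Each individual manipulation is elementary; the only delicate choice is which of the two quadratics to lean on in each direction so that the stated constants — not merely some constants — emerge.
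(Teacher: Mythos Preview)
Your setup (reducing to the scalar quantities $U=\|x-\tilde x\|_{[1]}$, $W=\|x-\bar x\|_{[1]}$, $D=\|\tilde x-\bar x\|_{[1]}$) and your ``second inequality'' are exactly what the paper does: applying Lemma~\ref{lem:optimality-prox-map} to both $F^\gamma(\cdot,x,v)$ at $y=\bar x$ and to $F_\mu(\cdot,x,v)$ at $y=\tilde x$ and adding yields
\[
\tfrac{\gamma+\mu}{2}D^2 \;\le\; \tfrac{\gamma-\mu}{2}W^2 + \tfrac{L/L_{\min}-(\gamma-\mu)}{2}U^2,
\]
which is the paper's relation~(\ref{eq:bound-dist-prox}). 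For the lower bound~(\ref{prop:bound-g-bigger-p}) the paper then uses $W^2\le 2U^2+2D^2$ inside this inequality to isolate $D^2\le\tfrac{\gamma-\mu+L/L_{\min}}{3\mu-\gamma}U^2$ and finishes with $W\le U+D$. Your plan here is essentially identical. Your ``first inequality'' via monotonicity of $\partial\phi$ is a valid alternative route to \emph{some} two-sided bound on $U/W$, but it is not used by the paper and does not reproduce the stated constants.

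The genuine gap is in the upper bound~(\ref{prop:bound-p-bigger-g}). The paper does \emph{not} obtain it by solving a quadratic in $U/W$ from your two inequalities at the given $\gamma$. Instead it introduces an auxiliary point $\tilde z=\argmin_y F^{\tilde\gamma}(y,x,v)$ with the specific choice $\tilde\gamma=\mu+L/L_{\min}$, picked precisely so that the $U^2$ coefficient in~(\ref{eq:bound-dist-prox}) vanishes; this gives $\|\tilde z-\bar x\|_{[1]}\le\sqrt{\tfrac{L}{2L_{\min}\mu+L}}\,W$ and hence $\|\tilde z-x\|_{[1]}\le\bigl(1+\sqrt{\tfrac{L}{2L_{\min}\mu+L}}\bigr)W$. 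The transfer from $\tilde z$ to $\tilde x$ is then done via strong convexity of $F^\gamma(\cdot,x,v)$, which yields $\gamma\|\tilde x-x\|_{[1]}\le\tilde\gamma\|\tilde z-x\|_{[1]}$; bounding $\tilde\gamma/\gamma\le\tilde\gamma/\mu=1+L/(\mu L_{\min})$ via $\gamma\ge\mu$ produces exactly the factor $\bigl(1+\tfrac{L}{\mu L_{\min}}\bigr)$. Without this auxiliary-point device your quadratics deliver different (and in some regimes tighter) constants, but not the ones stated. Relatedly, your diagnosis that the disappearance of $\gamma$ from the right side of~(\ref{prop:bound-p-bigger-g}) comes from invoking $\gamma<3\mu$ is off: the upper bound uses only $\gamma\ge\mu$; the hypothesis $\gamma<3\mu$ is needed solely for the denominator $3\mu-\gamma$ in~(\ref{prop:bound-g-bigger-p}).
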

\begin{proof}
Using the optimality of $\wtil x$ and $\bar{x}$ for optimizing (\ref{eq:F-gm-surrogate})
and (\ref{eq:F-mu-surrogate}), respectively, and using Lemma \ref{lem:optimality-prox-map},
we have
\begin{equation}
\bigl\langle\nabla f(x)-v,\wtil x-y\bigr\rangle+\phi(\wtil x)-\phi(y)\le\tfrac{\gamma}{2}\|x-y\|_{[1]}^{2}-\tfrac{\gamma}{2}\|x-\wtil x\|_{[1]}^{2}-\tfrac{\gamma}{2}\|y-\wtil x\|_{[1]}^{2},\label{prop:opt1}
\end{equation}
and
\begin{equation}
f(\bar{x})+\phi(\bar{x})-\bigl\langle v,\bar{x}-y\bigr\rangle-f(y)-\phi(y)\le\tfrac{\mu}{2}\|x-y\|_{[1]}^{2}-\tfrac{\mu}{2}\|x-\bar{x}\|_{[1]}^{2}-\tfrac{\mu}{2}\|y-\bar{x}\|_{[1]}^{2},\label{prop:opt2}
\end{equation}
Plugging in $y=\bar{x}$ into the (\ref{prop:opt1}) and plugging
$y=\wtil x$ into (\ref{prop:opt2}) respectively and then adding
the two inequalities together, we have
\[
f(\bar{x})-f(\wtil x)+\bigl\langle\nabla f(x),\wtil x-\bar{x}\bigr\rangle\le\tfrac{\gamma-\mu}{2}\|x-\bar{x}\|_{[1]}^{2}-\tfrac{\gamma-\mu}{2}\|x-\wtil x\|_{[1]}^{2}-\tfrac{\mu+\gamma}{2}\|\wtil x-\bar{x}\|_{[1]}^{2},
\]
Moreover, due to the convexity and the Lipschitz smoothness of $f(x)$,
we deduce
\begin{equation*}
\begin{aligned}
& f(\bar{x})-f(\wtil x)+\bigl\langle\nabla f(x),\wtil x-\bar{x}\bigr\rangle \\
& = f(\bar{x})-f(x)-\bigl\langle\nabla f(x),\bar{x}-x\bigr\rangle-[f(\wtil x)-f(x)-\bigl\langle\nabla f(x),\wtil x-x\bigr\rangle]\\
& \ge -\tfrac{L}{2}\|x-\wtil x\|^{2}.
\end{aligned}
\end{equation*}
Combining the above two inequalities, we have 
\begin{equation}
\tfrac{\mu+\gamma}{2}\|\wtil x-\bar{x}\|_{[1]}^{2}\le\tfrac{\gamma-\mu}{2}\|x-\bar{x}\|_{[1]}^{2}+\tfrac{L}{2}\|x-\wtil x\|^{2}-\tfrac{\gamma-\mu}{2}\|x-\wtil x\|_{[1]}^{2}.\label{eq:bound-dist-prox}
\end{equation}
Moreover, due to the inequality $\|a+b\|^{2}\le2(\|a\|^{2}+\|b\|^{2})$,
it follows that 
\begin{equation*}
\begin{aligned}
\tfrac{\mu+\gamma}{2}\|\wtil x-\bar{x}\|_{[1]}^{2} & \le(\gamma-\mu)\left(\|x-\wtil x\|_{[1]}^{2}+\|\wtil x-\bar{x}\|_{[1]}^{2}\right)+\tfrac{L}{2}\|x-\wtil x\|^{2}-\tfrac{\gamma-\mu}{2}\|x-\wtil x\|_{[1]}^{2}\\
 & \le(\gamma-\mu)\|\wtil x-\bar{x}\|_{[1]}^{2}+\tfrac{\gamma-\mu+L/L_{\min}}{2}\|x-\wtil x\|_{[1]}^{2}.
\end{aligned}
\end{equation*}
Therefore, we have 
\[
\tfrac{3\mu-\gamma}{2}\|\wtil x-\bar{x}\|_{[1]}^{2}\le\tfrac{\gamma-\mu+L/L_{\min}}{2}\|x-\wtil x\|_{[1]}^{2}.
\]
Using the triangle inequality, we have 
\[
\|\bar{x}-x\|_{[1]}\le\|\wtil x-x\|_{[1]}+\|\wtil x-\bar{x}\|_{[1]}\le\Bigl(1+\sqrt{\tfrac{\gamma-\mu+L/L_{\min}}{3\mu-\gamma}}\Bigr)\|\wtil x-x\|_{[1]}.
\]
Notice that we have $\|g(x,\nabla f(x)-v,\gamma)\|_{[1],*}=\gamma\|\wtil x-x\|_{[1]}$
and $\|p(x,v,\mu)\|_{[1],*}=\mu\|\bar{x}-x\|_{[1]}$ by definition.
Then the result (\ref{prop:bound-g-bigger-p}) immediately follows.

Furthermore, let $\wtil z=\argmin F^{\wtil{\gamma}}(y,x, v)$ for $\wtil{\gamma}=\mu+\tfrac{L}{L_{\min}}$.
Placing $\gamma=\wtil{\gamma}$ and $\wtil x=\wtil z$ in relation
(\ref{eq:bound-dist-prox}), we have 
\begin{align*}
\tfrac{\mu+\wtil{\gamma}}{2}\|\wtil z-\bar{x}\|_{[1]}^{2} & \le\tfrac{\wtil{\gamma}-\mu}{2}\|x-\bar{x}\|_{[1]}^{2}+\tfrac{L}{2}\|\wtil z-x\|^{2}-\tfrac{\wtil{\gamma}-\mu}{2}\|x-\wtil z\|_{[1]}^{2}\\
 & \le\tfrac{L}{2L_{\min}}\|x-\bar{x}\|_{[1]}^{2}+\tfrac{L}{2L_{\min}}\|\wtil z-x\|_{[1]}^{2}-\tfrac{\wtil{\gamma}-\mu}{2}\|x-\wtil z\|_{[1]}^{2}\\
 & \le\tfrac{L}{2L_{\min}}\|x-\bar{x}\|_{[1]}^{2}.
\end{align*}
We similarly obtain
\begin{equation}
\|\wtil z-x\|_{[1]}\le\|\wtil z-\bar{x}\|_{[1]}+\|\bar{x}-x\|_{[1]}\le\left(1+\sqrt{\tfrac{L}{2L_{\min}\mu+L}}\right)\,\|\bar{x}-x\|_{[1]}.\label{eq:zbound-xbar}
\end{equation}

Next we derive a bound of $\|\wtil x-x\|_{[1]}$ in terms of $\|\wtil z-x\|_{[1]}$.
Due to the optimality of $\wtil z$ ($0\in\partial F^{\wtil{\gamma}}(\wtil z,x, v)$),
we have 
\[
0\in\nabla f(x)-v+\partial\phi(\wtil z)+\wtil{\gamma}\tsum_{i\in[m]}L_{i}\Ubf_{i}(\wtil z_{i}-x_{i}),
\]
let us denote a subgradient $\nabla F^{\gamma}(\wtil z,x)\in\partial F^{\gamma}(\wtil z,x)$
such that $0=\nabla F^{\gamma}(\wtil z,x)+(\wtil{\gamma}-\gamma)\tsum_{i\in[m]}\Ubf_{i}L_{i}(\wtil z_{i}-x_{i})$,
hence we have $\|\nabla F^{\gamma}(\wtil z,x)\|_{[1],*}=(\wtil{\gamma}-\gamma)\|\wtil z-x\|_{[1]}$.
Then using strong convexity of $F^{\gamma}(y,x)$ and optimality of
$\wtil x$ ($0\in\partial F^{\gamma}(\wtil x,x)$) , we have
\[
(\wtil{\gamma}-\gamma)\|\wtil z-x\|_{[1]}=\|\nabla F^{\gamma}(\wtil z,x)\|_{[1],*}\ge\gamma\|\wtil z-\wtil x\|_{[1]}\ge\gamma\bigl[\|\wtil x-x\|_{[1]}-\|\wtil z-x\|_{[1]}\bigr].
\]
We conclude that $\wtil{\gamma}\|\wtil z-x\|_{[1]}\ge\gamma\|\wtil x-x\|_{[1]}$.
In view of relation (\ref{eq:zbound-xbar}), we have

\[
\|\wtil x-x\|_{[1]}\le\left(\tfrac{\mu}{\gamma}+\tfrac{L}{L_{\min}\gamma}\right)\left(1+\sqrt{\tfrac{L}{2L_{\min}\mu+L}}\right)\,\|\bar{x}-x\|_{[1]}.
\]
Then relation (\ref{prop:bound-p-bigger-g}) follows.
\end{proof}
We are now ready to present the main convergence result of Algorithm
\ref{alg:acpdc} in the following theorem.
\begin{theorem}
\label{thm:acdc}Assume that $f(x)$ is convex, and that there exists
$M>0$ such that $M=\sup_{v\in\partial h(x)}\norm v<+\infty.$
In Algorithm \ref{alg:acpdc} if we set $t=t_0=\Bigl\lceil\ln4\tfrac{m}{\sqrt{\mu/(1+\mu)}}\Bigr\rceil$,
then
\begin{enumerate}
\item Every limit point of the sequence is a critical point, a.s.;
\item We have 
\[
\min_{1\le k\le K}\Ebb\left[\|p(x^{k},v^{k},\mu)\|_{[1],*}^{2}\right]\le\tfrac{2\mu\bigl[F(x^{0})-F(x^{*})+4M\|x^{0}-x^{*}\|+\mu\|x^{0}-x^{*}\|_{[1]}^{2}\bigr]}{K}.
\]
\end{enumerate}
\end{theorem}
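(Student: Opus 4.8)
The plan is to distill from Algorithm \ref{alg:acpdc} a single expected sufficient-decrease inequality for $F$ along $\{x^{k}\}$, and then feed it into (i) a supermartingale argument for the asymptotic claim and (ii) a telescoping sum for the rate. Write $\Ebb_{k}[\cdot]$ for the conditional expectation given $x^{0},\dots,x^{k},v^{k}$, let $\bar{x}^{k+1}=\argmin_{x}F_{k}(x)$ be the \emph{exact} minimizer of the $k$-th surrogate \eqref{eq:Fk-pdc}, and put $F_{k}^{*}=F_{k}(\bar{x}^{k+1})$. Two elementary facts will be used repeatedly: $F_{k}(x^{k})=F(x^{k})$ (the linearization of $h$ is exact at $x^{k}$ and the added term vanishes there), and $F_{k}(x)\ge F(x)$ for every $x$ (linearizing the convex $h$ at $x^{k}$ underestimates it, and $\tfrac{\mu}{2}\norm{\cdot}_{[1]}^{2}\ge0$); moreover $\norm{p(x^{k},v^{k},\mu)}_{[1],*}=\mu\norm{x^{k}-\bar{x}^{k+1}}_{[1]}$ directly from the definition of $p$. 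The target estimate is
\begin{equation}
\Ebb_{k}\bigl[F(x^{k+1})\bigr]\le\tfrac{1}{2}F(x^{k})+\tfrac{1}{2}F_{k}^{*}\le F(x^{k})-\tfrac{1}{4\mu}\norm{p(x^{k},v^{k},\mu)}_{[1],*}^{2},\qquad k\ge0.\label{eq:plan-descent}
\end{equation}
To prove \eqref{eq:plan-descent}, note $F_{k}$ is convex, block-wise Lipschitz smooth with constants $\wtil L_{i}=(1+\mu)L_{i}$, and $\wtil{\mu}$-strongly convex w.r.t.\ $\norm{\cdot}_{\tone}$ with $\wtil{\mu}=\mu/(1+\mu)$, so Theorem \ref{thm:apcg} applies to the APCG run started at $x^{k}$ and gives, after $t=t_{0}$ inner iterations, $\Ebb_{k}[F_{k}(x^{k+1})-F_{k}^{*}]\le(1-\sqrt{\wtil{\mu}}/m)^{t_{0}}\bigl(F_{k}(x^{k})-F_{k}^{*}+\tfrac{\wtil{\mu}}{2}\norm{x^{k}-\bar{x}^{k+1}}_{\tone}^{2}\bigr)$. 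The choice $t_{0}=\bigl\lceil\ln 4\cdot m/\sqrt{\wtil{\mu}}\,\bigr\rceil$ with $1-u\le e^{-u}$ yields $(1-\sqrt{\wtil{\mu}}/m)^{t_{0}}\le\tfrac{1}{4}$, while strong convexity of $F_{k}$ evaluated at its own optimum gives $\tfrac{\wtil{\mu}}{2}\norm{x^{k}-\bar{x}^{k+1}}_{\tone}^{2}=\tfrac{\mu}{2}\norm{x^{k}-\bar{x}^{k+1}}_{[1]}^{2}\le F_{k}(x^{k})-F_{k}^{*}$; hence $\Ebb_{k}[F_{k}(x^{k+1})-F_{k}^{*}]\le\tfrac{1}{2}(F_{k}(x^{k})-F_{k}^{*})$. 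Combining with $F(x^{k+1})\le F_{k}(x^{k+1})$, $F(x^{k})=F_{k}(x^{k})$, and the prox-gap bound $F_{k}^{*}\le F(x^{k})-\tfrac{1}{2\mu}\norm{p(x^{k},v^{k},\mu)}_{[1],*}^{2}$ --- which is \eqref{eq:prox-bound-eps} with $\epsilon=F(x^{k})-F_{k}^{*}$ --- gives \eqref{eq:plan-descent}.

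For part 1, apply the supermartingale convergence theorem to $Y_{k}=F(x^{k})-F(x^{*})\ge0$ using \eqref{eq:plan-descent}: $\{Y_{k}\}$ converges a.s.\ and $\sum_{k}\norm{p(x^{k},v^{k},\mu)}_{[1],*}^{2}<\infty$ a.s., hence $\norm{x^{k}-\bar{x}^{k+1}}_{[1]}\to0$ a.s. Since $\{F(x^{k})\}$ is a.s.\ bounded, level-boundedness of $F$ makes $\{x^{k}\}$ a.s.\ bounded. Along a convergent subsequence $x^{k_{j}}\to\bar{x}$ we get $\bar{x}^{k_{j}+1}\to\bar{x}$, and, passing to a further subsequence, $v^{k_{j}}\to\bar{v}$ with $\norm{v^{k_{j}}}\le M$. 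Writing the optimality condition for $\bar{x}^{k_{j}+1}$,
\[
0\in\nabla f(\bar{x}^{k_{j}+1})-v^{k_{j}}+\partial\phi(\bar{x}^{k_{j}+1})+\mu\tsum_{i=1}^{m}L_{i}\Ubf_{i}(\bar{x}_{i}^{k_{j}+1}-x_{i}^{k_{j}}),
\]
and letting $j\to\infty$ --- using continuity of $\nabla f$, $\norm{x^{k_{j}}-\bar{x}^{k_{j}+1}}\to0$, lower semicontinuity of $\phi$ so that $\phi(\bar{x}^{k_{j}+1})\to\phi(\bar{x})$ exactly as in the proof of Theorem \ref{thm:rcd}, and the closed graphs of $\partial h$ and $\partial\phi$ --- yields $\bar{v}\in\partial h(\bar{x})$ and $\bar{v}-\nabla f(\bar{x})\in\partial\phi(\bar{x})$, i.e.\ $\bigl[\nabla f(\bar{x})+\partial\phi(\bar{x})\bigr]\cap\partial h(\bar{x})\neq\emptyset$, so $\bar{x}$ is a critical point.

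For part 2, take total expectations in \eqref{eq:plan-descent}, sum over $k=1,\dots,K$, and use $F(x^{K+1})\ge F(x^{*})$ to get $\tfrac{1}{4\mu}\sum_{k=1}^{K}\Ebb\bigl[\norm{p(x^{k},v^{k},\mu)}_{[1],*}^{2}\bigr]\le\Ebb[F(x^{1})]-F(x^{*})$. The first outer iteration is handled separately: from $F(x^{1})\le F_{0}(x^{1})$ and the bound above, $\Ebb[F(x^{1})]\le F_{0}^{*}+\tfrac{1}{2}(F_{0}(x^{0})-F_{0}^{*})$; here $F_{0}(x^{0})-F_{0}^{*}=F(x^{0})-F_{0}^{*}\le F(x^{0})-F(x^{*})$ since $F_{0}\ge F\ge F(x^{*})$ forces $F_{0}^{*}\ge F(x^{*})$, while $F_{0}^{*}\le F_{0}(x^{*})=F(x^{*})+\bigl[h(x^{*})-h(x^{0})-\langle v^{0},x^{*}-x^{0}\rangle\bigr]+\tfrac{\mu}{2}\norm{x^{0}-x^{*}}_{[1]}^{2}\le F(x^{*})+2M\norm{x^{0}-x^{*}}+\tfrac{\mu}{2}\norm{x^{0}-x^{*}}_{[1]}^{2}$, using that $\partial h$ is bounded by $M$ (so $h$ is $M$-Lipschitz and $\norm{v^{0}}\le M$). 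Hence $\Ebb[F(x^{1})]-F(x^{*})\le\tfrac{1}{2}(F(x^{0})-F(x^{*}))+2M\norm{x^{0}-x^{*}}+\tfrac{\mu}{2}\norm{x^{0}-x^{*}}_{[1]}^{2}$, and multiplying the summed inequality by $4\mu/K$ and using $\min_{1\le k\le K}\le\tfrac{1}{K}\sum_{k=1}^{K}$ gives exactly the stated bound.

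The main obstacle is establishing \eqref{eq:plan-descent}: one must correctly chain the APCG guarantee (which is itself an expectation over the \emph{inner} coordinate randomness, conditioned on $\mathcal{F}_{k}$), check that $t_{0}$ is large enough that one contraction factor $\tfrac{1}{4}$ survives the factor-$2$ loss from the initial-distance term, and then convert the residual surrogate gap $F_{k}(x^{k})-F_{k}^{*}$ into the optimality measure via \eqref{eq:prox-bound-eps}. Once \eqref{eq:plan-descent} is in hand, parts 1 and 2 are routine, the only finesse being the first-iteration bookkeeping in part 2, which is precisely where the $M$- and $\mu$-dependent terms of the bound originate.
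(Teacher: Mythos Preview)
Your proof is correct, and it takes a somewhat different (and cleaner) route from the paper's.

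The paper does not isolate a single descent inequality in $F$. Instead it works throughout with the surrogate gap $F_{k}(x^{k})-F_{k}(x^{k+1^{*}})$: from the APCG contraction it derives the recursion
\[
\Ebb\bigl[F_{k}(x^{k})-F_{k}(x^{k+1^{*}})\bigr]\le\Ebb\bigl[F_{k-1}(x^{k^{*}})-F_{k}(x^{k+1^{*}})\bigr]+2\lambda\,\Ebb\bigl[F_{k-1}(x^{k-1})-F_{k-1}(x^{k^{*}})\bigr],
\]
using the link $F_{k}(x^{k})=F(x^{k})\le F_{k-1}(x^{k})$. Summing this over $k$ makes the terms $F_{k-1}(x^{k^{*}})-F_{k}(x^{k+1^{*}})$ telescope, leaving $F_{0}(x^{1^{*}})-\Ebb[F_{K}(x^{K+1^{*}})]$ plus a $2\lambda$-weighted boundary term; bounding $F_{0}(x^{1^{*}})\le F(x^{*})+2M\|x^{0}-x^{*}\|+\tfrac{\mu}{2}\|x^{0}-x^{*}\|_{[1]}^{2}$ and $F_{K}(x^{K+1^{*}})\ge F(x^{*})$ then produces the rate. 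Your approach instead absorbs the APCG contraction and the strong-convexity bound into the single inequality \eqref{eq:plan-descent} on $F$ itself, so the telescoping is in $\Ebb[F(x^{k})]$ and the only ``boundary'' work is the first-iteration estimate on $\Ebb[F(x^{1})]-F(x^{*})$, which you handle exactly as the paper handles $F_{0}(x^{1^{*}})$. The two derivations yield the same constants; your version is more direct, avoids carrying the auxiliary sequence $\{F_{k}(x^{k+1^{*}})\}$ through the sum, and makes the supermartingale for part~1 immediate from the same inequality. The paper's route, on the other hand, generalizes more readily to the variant in Section~\ref{sec:acd-pp}, where the gap $F_{k}(x^{k})-F_{k}(x^{k+1^{*}})$ is reused to bound both $\|x^{\hat k}-x^{\hat k^{*}}\|$ and $\dis(0,\partial F(x^{\hat k^{*}}))$.
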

\begin{proof}
Based on the earlier discussion, $F_{k}(x)$ is block-wise Lipschitz smooth with
constant $(1+\mu)L_{i}$, and  $\wtil{\mu}$-strongly convex ($\wtil{\mu}=\frac{\mu}{1+\mu}$)
with $\norm{\cdot}_{\tone}$, where $
\norm x_{\tone}=\sqrt{\sum_{i=1}^{m}(1+\mu)L_{i}\norm{x_{\i}}_{\i}^{2}}$.
For brevity, we denote
$\lambda=(1-{\sqrt{\wtil{\mu}}}/m){}^{t}$. 
Let $x^{k+1^{*}}$ be the optimal solution for minimizing $F_k(\cdot)$.
After running Algorithm \ref{alg:APCG} we obtain the following convergence
relation
\begin{align}
\Ebb[F_{k}(x^{k+1})-F_{k}(x^{k+1^{*}})] & \le\Bigl(1-\tfrac{\sqrt{\wtil{\mu}}}{m}\Bigr)^{t}\Bigl[F_{k}(x^{k})-F_{k}(x^{k+1^{*}})+\tfrac{\wtil{\mu}}{2}\|x^{k}-x^{k+1^{*}}\|_{\tone}^{2}\Bigr]\nonumber \\
 & \le2\lambda\left[F_{k}(x^{k})-F_{k}(x^{k+1^{*}})\right],\label{eq:subprob-conv}
\end{align}
where the expectation is over $x^{k+1}$. It then follows that
\begin{align*}
	\Ebb[F_{k}(x^{k+1})] & \le F_k(x^k)-(1-2\lambda)[F_k(x^k)-F_k(x^{k+1^{*}})]\\
		& \le F(x^k)-\tfrac{\mu(1-2\lambda)}{2}\|x^k-x^{k+1^{*}}\|^2_{[1]} \\
		& \le f(x^k)+\phi(x^k) -h(x^{k-1})-\langle v^{k-1},x^k-x^{k-1} \rangle -\tfrac{\mu(1-2\lambda)}{2}\|x^k-x^{k+1^{*}}\|^2_{[1]} \\
		& = F_{k-1}(x^k) -\tfrac{\mu}{2}\|x^k-x^{k-1}\|^2_{[1]} -\tfrac{\mu(1-2\lambda)}{2}\|x^k-x^{k+1^{*}}\|^2_{[1]}.
\end{align*}
Here the second inequality is due to strong convexity of $F_k(\cdot)$, and the third inequality is due to the convexity of $h(\cdot)$.
Therefore, based on the supermartingale convergence theorem, we have that $\lim_{k\raw \infty} F_k(x^{k+1})=\pi$ for some random variable $\pi$, $\tsum_{k=0}^{\infty}\|x^{k}-x^{k+1^{*}}\|^{2}<\infty$
a.s., $\tsum_{k=0}^{\infty}\|x^{k}-x^{k-1}\|^{2}<\infty$ and, hence, $\lim_{k\rightarrow\infty}\|x^{k}-x^{k+1^{*}}\|=0$
a.s. and $\lim_{k\rightarrow\infty}\|x^{k}-x^{k-1}\|=0$. Moreover, we see that $\{F(x^k)\}$ is a bounded sequence a.s.

Let $\bar{x}$ be any limit point of  $\{x^{k}\}$, passing
to a subsequence if necessary, we therefore have $\lim_{k\raw\infty}x^{k+1^{*}}=\bar{x}$
a.s. Since $x^{k+1^{*}}$ obtains the optimum of the subproblem, we
can adopt an argument analogous to the previous analysis to show that
$\limsup_{k\raw\infty}\phi(x^{k+1^{*}})=\phi(\bar{x})$ a.s. Hence, by
lower semi-continuity of $\phi(\cdot)$, we have $\lim_{k\raw\infty}\phi(x^{k+1^{*}})=\phi(\bar{x})$ a.s.
Moreover, based on the the optimality condition for minimizing $F_{k}(\cdot)$,
we obtain
\[
0=\nabla f(x^{k+1^{*}})+u^{k+1}-v^{k}+\wtil{\mu}\tsum_{\i=1}^{m}\wtil L_{\i}\Ubf_{i}\bigl(x_{\i}^{k+1^{*}}-x_{\i}^{k}\bigr),
\]
where $u^{k+1}\in\partial\phi(x^{k+1^{*}}).$ 
Due to the continuity of $h(x)$ and almost sure boundedness of $x^k$, we have $\lim_{k\raw\infty}v^{k}=\bar{v}\in\partial h(\bar{x})$ for some $\bar{v}$.
Taking $k\raw\infty$, we have 
\[
\lim_{k\raw\infty}u^{k+1}=\bar{u}=\bar{v}-\nabla f(\bar{x}),\quad a.s.
\]
Due to the graph continuity
of limiting subdifferential  we have $\lim_{k\raw\infty}u^{k+1}=\bar{u}\in\partial\phi(\bar{x})$ for some
$\bar{u}$. Thus by definition $\bar{x}$ is a.s. a critical point.

For the second part, let us establish some relation between different iterates. Taking expectation over $x^k$, we have
\begin{align}
& \Ebb\bigl[F_{k}(x^{k})-F_{k}(x^{k+1^{*}})\bigr] \nonumber \\
& \le\Ebb\bigl[F_{k-1}(x^{k})-F_{k}(x^{k+1^{*}})\bigr]\nonumber \\
 & \le\Ebb\bigl[F_{k-1}(x^{k^{*}})-F_{k}(x^{k+1^{*}})\bigr]+2\lambda\bigl[F_{k-1}(x^{k-1})-F_{k-1}(x^{k^{*}})\bigr],\label{eq:Fk-recur}
\end{align}
where the first inequality is obtained from $F_{k}(x^{k})=F(x^{k})\le F_{k-1}(x^{k})$
and the second inequality is obtained from the subproblem convergence
(\ref{eq:subprob-conv}).
Summing up the relation \eqref{eq:Fk-recur}
over $k=1,2,...,K$ and taking expectation over $x_1,x_2,...$, we have
\begin{align}
& \tsum_{k=1}^{K}\Ebb\bigl[F_{k}(x^{k})-F_{k}(x^{k+1^{*}})\bigr] \nonumber \\
& \le\Ebb\bigl[F_{0}(x^{1^{*}})-F_{K}(x^{K+1^{*}})\bigr]+2\lambda\tsum_{k=0}^{K-1}\Ebb\bigl[F_{k}(x^{k})-F_{k}(x^{k+1^{*}})\bigr].\label{eq:sum_fk_diff}
\end{align}
We next derive bounds on  $F_{k}(x^{k+1^{*}})$, $k=0,1,2,...$.
\begin{align}
F_{k}(x^{k+1^{*}}) & \le F_k(x^*)\nonumber \\
 & = F(x^{*})+\left[h(x^{*})-h(x^{k})-\left\langle v^{k},x^{*}-x^{k}\right\rangle \right]+\tfrac{\mu}{2}\|x^{*}-x^{k}\|_{[1]}^{2}\nonumber \\
 & \le F(x^{*})+2M\|x^{k}-x^{*}\|+\tfrac{\mu}{2}\|x^{*}-x^{k}\|_{[1]}^{2}.\label{eq:Fk_upper}
\end{align}
Here, the last inequality uses the relation
\begin{align*}
h(y)-h(x)-\left\langle \nabla h(x),y-x\right\rangle \le &\left\langle \nabla h(y)-\nabla h(x),y-x\right\rangle \\
\le & (\|\nabla h(y)\|+\|\nabla h(x)\|)\,\|x-y\|\le2M\,\|x-y\|,
\end{align*}
for any $x,y$, and $\nabla h(x)\in\partial h(x),\nabla h(y)\in\partial h(y).$ Moreover, we obtain
\begin{align}
F_{k}(x^{k+1^{*}}) & =\min_{x}f(x)+\phi(x)-h(x^{k})-\left\langle v^{k},x-x^{k}\right\rangle +\tfrac{\mu}{2}\|x-x^{k}\|_{[1]}^{2}\nonumber \\
 & \ge\min_{x}f(x)+\phi(x)-h(x)\nonumber \\
 & =F(x^{*}).\label{eq:Fk_lower}
\end{align}
In view of \eqref{eq:sum_fk_diff}, \eqref{eq:Fk_upper} and \eqref{eq:Fk_lower},
we have
\begin{align*}
& (1-2\lambda)\tsum_{k=1}^{K}\Ebb[F_{k}(x^{k})-F_{k}(x^{k+1^{*}})] \\
&\le  F_{0}(x^{1^{*}})-\Ebb [F_{K}(x^{K+1^{*}})]+2\lambda\,\Ebb[F(x^{0})-F_{0}(x^{1^{*}})]\\
& \le 2M\|x^{0}-x^{*}\|+\tfrac{\mu}{2}\|x^{0}-x^{*}\|_{[1]}^{2}+2\lambda\left[F(x^{0})-F(x^{*})\right].
\end{align*}
Taking $t\ge\ln4\frac{m}{\sqrt{\tilde{\mu}}}$, we have $\lambda=(1-\sqrt{\wtil{\mu}}/m)^{t}\le\exp(-(t\sqrt{\wtil{\mu}})/m)\le\tfrac{1}{4}.$
It follows that
\[
\tsum_{k=1}^{K}\Ebb\left[F_{k}(x^{k})-F_{k}(x^{k+1^{*}})\right]\le\left[F(x^{0})-F(x^{*})+4M\|x^{0}-x^{*}\|+\mu\|x^{0}-x^{*}\|_{[1]}^{2}\right].
\]
Moreover, due to (\ref{eq:prox-bound-eps}) and strong convexity of $F_k(\cdot)$, we have 
\[
\tfrac{1}{2\mu}\|p(x^{k},v^{k},\mu)\|_{[1],*}^{2}=\tfrac{\mu}{2}\|x^{k+1^{*}}-x^{k}\|_{[1]}^{2}\le F_{k}(x^{k})-F_{k}(x^{k+1^{*}}).
\]
Putting the above two relations together, we have 
\[
\tsum_{k=1}^{K}\Ebb\left[\|p(x^{k},v^{k},\mu)\|_{[1],*}^{2}\right]\le2\mu\bigl[F(x^{0})-F(x^{*})+4M\|x^{0}-x^{*}\|+\mu\|x^{0}-x^{*}\|_{[1]}^{2}\bigr].
\]
\end{proof}
\begin{remark}
Compared with  DCA, ACPDC  only requires ($\Ocal(\ln m)$) steps 
of ACD to approximately solve each subproblem. In order to obtain
an $\varepsilon$-accurate solution ($\min_{k}\Ebb\bigl[\|p(x^{k},v^{k},\mu)\|_{[1],*}^{2}\bigr]\le\varepsilon$),
the total number of block gradient computations is bounded by 
\[
N\in \Ocal\left(\ln4\tfrac{m}{\sqrt{\mu/(1+\mu)}}\cdot\tfrac{\mu\left[F(x^{0})-F(x^{*})+4M\|x^{0}-x^{*}\|+\mu\|x^{0}-x^{*}\|_{[1]}^{2}\right]}{\varepsilon}\right).
\]
\end{remark}

\section{Randomized Proximal Point Method for Weakly Convex Problems\label{sec:acd-pp}}

In this section we develop a new randomized proximal point algorithm
based on ACD, for minimizing weakly convex functions. We first make
some important assumptions. Specifically, we assume that $h(x)$ is
void and consider the following form
\begin{equation}
\min_{x}F(x)=f(x)+\phi(x).\label{eq:weakly-convex}
\end{equation}
Furthermore we assume that $f(x)$ is $\mu$-weakly convex:
\begin{align}
f(x) & \ge f(y)+\left\langle \nabla f(y),x-y\right\rangle -\frac{\mu}{2}\|x-y\|^{2}.\label{eq:weakly-cvx-assum-1}
\end{align}
We immediately see that the notion of weak convexity is implied by
Lipschitz smoothness. Suppose that $f(x)$ is Lipschitz smooth with
constant $L$: $\|\nabla f(x)-\nabla f(y)\|\le L\|x-y\|$, it is easy
to see that $f(x)\ge f(y)+\left\langle \nabla f(y),x-y\right\rangle -\frac{L}{2}\|x-y\|^{2}$,
namely, that $f(x)$ is $L$-weakly convex. Therefore, it is more
interesting to study nontrivial case when $\mu\neq L$. Throughout
this section, we consider an ill-conditioned weakly convex function
in the sense that $\mu\ll L_{i}$ ($ i\in [m]$). Such scenarios
often arise in a variety of machine learning applications. For example,
in regularized risk minimization, by adding a small nonconvex penalty
such as SCAD and MCP, the problem is weakly convex with a relatively
small value of $\mu$. Cases such as these will be discussed in more
detail in Section \ref{sec:Applications}.

To solve the above-mentioned problem, we present a new ACD-based proximal
point method (ACPP) in Algorithm \ref{alg:acd-pp}. Specifically,
at the $k$-th iteration, given the initial point $x^{k}$, we employ
APCG to approximately solve the following convex composite problem
with some appropriate accuracy:
\[
\min_{x}\,F_{k}(x)=F(x)+\mu\|x-x^{k}\|^{2}.
\]
It can be readily seen that $f(x)+\mu\|x-x^{k}\|^{2}$ is Lipschitz
smooth with $\wtil L=L+2\mu$ and block Lipschitz smooth with $\wtil L_{i}=L_{i}+2\mu$.
Let $s\in[0,1]$ and define the norm $\|x\|_{\ts}=\sqrt{\sum_{i=1}^{m}\wtil L_{i}^{s}\|x\|_{\i}^{2}}.$
Therefore, $F_{k}$ is $\wtil{\mu}_{s}$-strongly convex with norm
$\|\cdot\|_{\text{\ensuremath{\wtil{[s]}}}}$: $F_{k}(x)\ge F_{k}(y)+\bigl\langle\nabla F_{k}(y),x-y\bigr\rangle+\frac{\wtil{\mu}_{s}}{2}\|x-y\|_{\ts}^{2},$
where $\wtil{\mu}_{s}=\frac{\mu}{\wtil L_{\max}^{s}}.$

It should be pointed out that ACPP is closely related to the proximal
DC algorithm. Specifically, consider viewing the objective (\ref{eq:weakly-convex})
as the following difference-of-convex function: 
\begin{equation}
F(x)=\bigl[f(x)+\frac{\mu}{2}\|x\|^{2}+\phi(x)\bigr]-\frac{\mu}{2}\|x\|^{2}.\label{eq:DC-form2}
\end{equation}
To apply a proximal DC algorithm for minimizing (\ref{eq:DC-form2}),
we need to approximately minimize the following function: 
\begin{align*}
\wtil F_{k}(x) & =\bigl[f(x)+\frac{\mu}{2}\|x\|^{2}+\phi(x)\bigr]+\frac{\mu}{2}\|x-x^{k}\|^{2}-\frac{\mu}{2}\|x^{k}\|^{2}-\mu\bigl\langle x^{k},x-x^{k}\bigr\rangle \\
 & =F(x)+\mu\|x-x^{k}\|^{2}=F_{k}(x).
\end{align*}
Here $\wtil F_{k}$ is exactly the function to minimize in the ACPP subproblem.
As a consequence, ACPP can be viewed as a specific proximal DC algorithm
while the earlier technique developed for ACPDC can be adapted to
analyze the convergence of ACPP. Nevertheless, taking into account
the weakly-convex structure, we develop some new convergence analysis
based on the proximal point iteration. By properly choosing the parameters
and termination criterion, we establish new rates of convergence to
approximate stationary point solutions. We show that the convergence
performance of ACPP is much better than that of RCSD and pDCA when
the problem is unconstrained, smooth and ill-conditioned. In such
case, the convergence rates of all the compared algorithms are comparable
when expressing the convergence criteria in terms of $\|\nabla F(x)\|^{2}$.
ACPP has a much better complexity rate in minimizing $\|\nabla F(x)\|^{2}$
in comparison with other single stage CD methods.

Before develop the main convergence result, let us formally define
some optimality measures for problem (\ref{eq:weakly-convex}). %
Following the setup in \cite{lan2018accelerated}, we say that a point
$x$ is an $(\vep,\delta)$-approximate stationary point if there
exists $\bar{x}$ such that 
\[
\|x-\bar{x}\|^{2}\le\delta\quad\text{and }\quad\bigl[\dis(0,\partial F(\bar{x}))\bigr]^{2}\le\varepsilon.
\]
Moreover, $x$ is a stochastic $(\vep,\delta)$-approximate stationary
point if

\[
\Ebb\|x-\bar{x}\|^{2}\le\delta\quad\text{and }\quad\Ebb\bigl[\dis(0,\partial F(\bar{x}))\bigr]^{2}\le\varepsilon.
\]
Here we define $\dis(y,X)=\inf_{x\in X}\|x-y\|$. Conceptually, an
approximate stationary point is an iterate in proximity to some nearly-stationary
point. Note that similar criteria have been proposed in \cite{davis2019proximally,drusvyatskiy2018efficiency}
for minimizing nonsmooth and weakly convex functions.

\begin{algorithm2e}
\KwIn{$x^{0}$, $\mu$, $t$;}

Compute $\wtil{\mu}_{1}$\;

\For{k=0,1,2,...,K}{

Compute $v^{k}\in\partial h(x^{k})$ \;

Set $F_{k}(x)=F(x)+\mu\|x-x^{k}\|^{2}$\;

Obtain $x^{k+1}$ from running Algorithm \ref{alg:APCG} with input
$F_{k}(x)$, $x^{k}$ , $\wtil{\mu}_{1}$ and $t$\;

}

Choose $\hat{k}$ from $\{2,3,...,K+1\}$ uniformly at random\;

\KwOut{$x^{\hat{k}}$.}

\caption{ACPP\label{alg:acd-pp}}
\end{algorithm2e}

In the following theorem, we develop the main convergence property
of Algorithm \ref{alg:acd-pp}.
\begin{theorem}
\label{thm:acd-composi}In Algorithm \ref{alg:acd-pp}, let $\kappa=\wtil L_{\max}/\wtil L_{\min}$,
$\eta=\frac{\sqrt{\wtil{\mu}_{1}}}{m}$ and assume that $\lambda=(1-\eta)^{t}<\frac{1}{2}$.
Then there exists a random $x^{\hat{k}^{*}}$ such that
\begin{align}
\Ebb\|x^{\hat{k}}-x^{\hat{k}^{*}}\|^{2} & \le\tfrac{4\kappa\lambda}{K\mu(1-2\lambda)}\Bigl\{\mu\|x^{0}-x^{*}\|^{2}+2\lambda[F(x^{0})-F(x^{*})]\Bigr\},\label{eq:acpp-pt-dist1}\\
\Ebb\Bigl[\dis(0,\partial F(x^{\hat{k}^{*}}))\Bigr]^{2} & \le\tfrac{8\kappa\mu}{K(1-2\lambda)}\Bigl\{\mu\|x^{0}-x^{*}\|^{2}+2\lambda[F(x^{0})-F(x^{*})]\Bigr\}.\label{eq:acpp-subdiff-bound1}
\end{align}
In particular, if we set $t=t_0=\bigl\lceil-\eta^{-1}\ln\wtil{\lambda}\bigr\rceil$
where $\wtil{\lambda}=\min\left\{ \tfrac{1}{4},\frac{\mu^{2}}{L^{2}},\frac{\mu}{2L}\right\} $,
then 
\begin{align}
\Ebb\|x^{\hat{k}}-x^{\hat{k}^{*}}\|^{2} & \le\tfrac{8\kappa\mu}{KL^{2}}\Bigl\{\mu\|x^{0}-x^{*}\|^{2}+\tfrac{\mu}{L}[F(x^{0})-F(x^{*})]\Bigr\},\label{eq:acpp-pt-dist2}\\
\Ebb\Bigl[\dis(0,\partial F(x^{\hat{k}^{*}}))\Bigr]^{2} & \le\tfrac{16\kappa\mu}{K}\Bigl\{\mu\|x^{0}-x^{*}\|^{2}+\tfrac{\mu}{L}[F(x^{0})-F(x^{*})]\Bigr\}.\label{eq:acpp-subdiff-bound2}
\end{align}
\end{theorem}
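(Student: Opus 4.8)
The plan is to adapt the argument behind Theorem \ref{thm:acdc} (ACPDC) to the proximal-point setting, now using that $h\equiv 0$ (so no $M$-term appears) and that $F_k(x)=F(x)+\mu\|x-x^k\|^2$ is precisely the $k$-th subproblem. Throughout write $x^{k+1^*}=\argmin_x F_k(x)$, and recall that $F_k$ is block-wise Lipschitz smooth with constants $\wtil L_i=L_i+2\mu$ and $\wtil\mu_1$-strongly convex with respect to $\|\cdot\|_{\tone}$, where $\wtil\mu_1=\mu/\wtil L_{\max}$ and hence $\wtil\mu_1\wtil L_{\min}=\mu/\kappa$. Applying Theorem \ref{thm:apcg} to the $k$-th subproblem run for $t$ APCG steps and then using $\tfrac{\wtil\mu_1}{2}\|x^k-x^{k+1^*}\|_{\tone}^2\le F_k(x^k)-F_k(x^{k+1^*})$ (strong convexity, since $0\in\partial F_k(x^{k+1^*})$) to absorb the initial-distance term gives the clean subproblem estimate
\[
\Ebb\bigl[F_k(x^{k+1})-F_k(x^{k+1^*})\bigr]\le 2\lambda\,\Ebb\bigl[F_k(x^k)-F_k(x^{k+1^*})\bigr],\qquad\lambda=(1-\eta)^{t}.
\]

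The quantitative core is a telescoping bound on $\sum_k\Ebb[F_k(x^k)-F_k(x^{k+1^*})]$, copied from the ACPDC proof. Since $F_k(x^k)=F(x^k)\le F_{k-1}(x^k)$, adding and subtracting $F_{k-1}(x^{k^*})=\min_x F_{k-1}(x)$ and using the subproblem estimate at step $k-1$ yields
\[
\Ebb\bigl[F_k(x^k)-F_k(x^{k+1^*})\bigr]\le \Ebb\bigl[F_{k-1}(x^{k^*})-F_k(x^{k+1^*})\bigr]+2\lambda\,\Ebb\bigl[F_{k-1}(x^{k-1})-F_{k-1}(x^{k^*})\bigr].
\]
Summing over $k=1,\dots,K$, the first terms telescope (the sequence $\min_x F_k(x)$), and after rearranging and invoking $F_0(x^{1^*})\le F_0(x^*)=F(x^*)+\mu\|x^0-x^*\|^2$ together with $F_0(x^{1^*}),\,F_K(x^{K+1^*})\ge F(x^*)$ (all because $F_k\ge F$ pointwise when $h\equiv 0$), I obtain
\[
\sum_{k=1}^K\Ebb\bigl[F_k(x^k)-F_k(x^{k+1^*})\bigr]\le \tfrac{1}{1-2\lambda}\bigl[\mu\|x^0-x^*\|^2+2\lambda\bigl(F(x^0)-F(x^*)\bigr)\bigr].
\]

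Next I would convert this into the two optimality measures. From $\wtil\mu_1$-strong convexity of $F_k$ at its minimizer and $\|y\|^2\le\wtil L_{\min}^{-1}\|y\|_{\tone}^2$, both $\|x^{k+1^*}-x^k\|^2$ and, via the subproblem estimate, $\Ebb\|x^{k+1}-x^{k+1^*}\|^2$ are controlled by $\tfrac{2\kappa}{\mu}$ times the corresponding $F_k$-gap; combined with the previous display this gives $\Ebb\|x^{k+1}-x^{k+1^*}\|^2\le\tfrac{4\kappa\lambda}{\mu}\Ebb[F_k(x^k)-F_k(x^{k+1^*})]$. The optimality condition $-2\mu(x^{k+1^*}-x^k)\in\partial F(x^{k+1^*})$ gives $[\dis(0,\partial F(x^{k+1^*}))]^2\le 4\mu^2\|x^{k+1^*}-x^k\|^2\le 8\kappa\mu[F_k(x^k)-F_k(x^{k+1^*})]$. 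Averaging each of these over $\hat k$ uniform on $\{2,\dots,K+1\}$, i.e.\ over $k=\hat k-1$ uniform on $\{1,\dots,K\}$, and plugging in the telescoping bound produces \eqref{eq:acpp-pt-dist1} and \eqref{eq:acpp-subdiff-bound1} with $x^{\hat k^*}=\argmin_x F_{\hat k-1}(x)$. Finally, for the last two estimates I substitute $t=t_0$: then $\lambda\le e^{-\eta t_0}\le\wtil\lambda=\min\{\tfrac14,\tfrac{\mu^2}{L^2},\tfrac{\mu}{2L}\}$, so $1-2\lambda\ge\tfrac12$, $\lambda\le\mu^2/L^2$, $\lambda^2\le\mu^3/(2L^3)$ and $2\lambda\le\mu/L$; inserting these into \eqref{eq:acpp-pt-dist1}--\eqref{eq:acpp-subdiff-bound1} and collecting terms gives \eqref{eq:acpp-pt-dist2}--\eqref{eq:acpp-subdiff-bound2}.

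The main obstacle, as in ACPDC, is the telescoping step. The naive descent-and-sum bound only controls $\sum_k\Ebb[F_k(x^k)-F_k(x^{k+1^*})]$ by $(F(x^0)-F(x^*))/(1-2\lambda)$, whereas one must carry the $F(x^0)-F(x^*)$ contribution with the smaller factor $2\lambda$ by comparing consecutive subproblem optima. That extra $2\lambda$ is precisely what, once $t_0$ is chosen, sharpens the function-value term from order $[F(x^0)-F(x^*)]/L^2$ to order $\mu[F(x^0)-F(x^*)]/L^3$, making the ill-conditioned comparison with RCSD and pDCA come out in ACPP's favor; getting the bookkeeping of the three points $x^k$, $x^{k+1}$, $x^{k+1^*}$ and the auxiliary sequence $(\min_x F_k)_k$ exactly right is the delicate part, and the remainder is routine.
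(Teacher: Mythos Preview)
Your proposal is correct and follows essentially the same route as the paper's proof: the same APCG subproblem estimate absorbed via strong convexity, the same recursion $\Ebb[F_k(x^k)-F_k(x^{k+1^*})]\le\Ebb[F_{k-1}(x^{k^*})-F_k(x^{k+1^*})]+2\lambda\,\Ebb[F_{k-1}(x^{k-1})-F_{k-1}(x^{k^*})]$ telescoped over $k$ with the bounds $F_0(x^{1^*})\le F(x^*)+\mu\|x^0-x^*\|^2$ and $F_K(x^{K+1^*})\ge F(x^*)$, and the same conversion to $\|x^{\hat k}-x^{\hat k^*}\|^2$ and $\dis(0,\partial F(x^{\hat k^*}))^2$ via $\wtil\mu_1\wtil L_{\min}=\mu/\kappa$ and the optimality condition $-2\mu(x^{\hat k^*}-x^{\hat k-1})\in\partial F(x^{\hat k^*})$. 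Your final substitution of $\lambda\le\wtil\lambda$ and the resulting simplifications likewise match the paper line by line.
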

\begin{proof}
First of all, due to the convergence of APCG (Theorem \ref{thm:apcg}),
we have 
\[
F_{k}(x^{k+1})-F_{k}(x^{k+1^{*}})\le\lambda\bigl[F_{k}(x^{k})-F_{k}(x^{k+1^{*}})+\tfrac{\tilde{\mu}_{1}}{2}\|x^{k}-x^{k+1^{*}}\|_{\wtil{[1]}}^{2}\bigr],\quad k=0,1,2,....
\]
Moreover, since $x^{k+1^{*}}$ is optimal for minimizing $F_{k}(\cdot)$,
we have the upper-bound of $F_{k}(x^{k+1^{*}})$:
\begin{equation}\label{eq:fk-upper}
F_{k}(x^{k+1^{*}}) \le F_{k}(x^{*})=F(x^{*})+\mu\|x^{k}-x^{*}\|^{2},
\end{equation}
and the lower-bound of $F_{k}(x^{k+1^{*}})$:
\begin{equation}\label{eq:fk-lower}
F_{k}(x^{k+1^{*}}) =\min_{x}f(x)+\phi(x)+\mu\|x-x^{k}\|^{2}\ge\min_{x}f(x)+\phi(x)=F(x^{*}).
\end{equation}
For any $k=0,1,2,...$, we deduce the relation
\begin{align}
& \Ebb\Bigl[F_{k}(x^{k})-F_{k}(x^{k+1^{*}})\Bigr]  \nonumber \\
& \le \Ebb\Bigl[F_{k-1}(x^{k})-F_{k}(x^{k+1^{*}})\Bigr]\nonumber \\
&  \le \Ebb\Bigl[F_{k-1}(x^{k^{*}})-F_{k}(x^{k+1^{*}})\Bigr]  +\lambda\Ebb\Bigl[F_{k-1}(x^{k-1})-F_{k-1}(x^{k^{*}})+\frac{\tilde{\mu}_{1}}{2}\|x^{k-1}-x^{k^{*}}\|_{\tone}^{2}\Bigr]\nonumber \\
&  \le \Ebb\Bigl[F_{k-1}(x^{k^{*}})-F_{k}(x^{k+1^{*}})\Bigr]+2\lambda\Ebb\left[F_{k-1}(x^{k-1})-F_{k-1}(x^{k^{*}})\right],\label{eq:sum-sq-bound}
\end{align}
where the first inequality is due to $F_{k}(x^{k})\le F(x^{k})+\mu\|x-x^{k-1}\|^{2}=F_{k-1}(x^{k})$,
the second inequality is due to the convergence of APCG (Theorem \ref{thm:apcg}),
and the third inequality is due to the strong convexity of $F_{k-1}(\cdot)$.
Summing up the above over $k=1,2,3,...K$, and then rearranging terms
appropriately, we have 
\begin{align}
& \tsum_{k=1}^{K}\Ebb\bigl[F_{k}(x^{k})-F_{k}(x^{k+1^{*}})\bigr] \nonumber \\
&\le  \tfrac{1}{1-2\lambda}\Bigl\{ F_{0}(x^{1^{*}})-\Ebb\bigl[F_{K}(x^{K+1^{*}})\bigr]+2\lambda\Ebb\bigl[F_{0}(x^{0})-F_{0}(x^{1^{*}})\bigr]\Bigr\}.\nonumber \\
&\le \tfrac{1}{1-2\lambda}\Bigl\{\mu\|x^{0}-x^{*}\|^{2}+2\lambda[F(x^{0})-F(x^{*})]\Bigr\}.
\end{align}
Here, the second inequality uses (\ref{eq:fk-upper}) and (\ref{eq:fk-lower}).
Applying the randomness of $\hat{k}$, we have 
\begin{align}
\Ebb\bigl[F_{\hat{k}-1}(x^{\hat{k}-1})-F_{\hat{k}-1}(x^{\hat{k}^{*}})\bigr] & \le\tfrac{1}{K}\tsum_{k=1}^{K}\Ebb\bigl[F_{k}(x^{k})-F_{k}(x^{k+1^{*}})\bigr]\nonumber \\
 & \le\tfrac{1}{K(1-2\lambda)}\Bigl\{\mu\|x^{0}-x^{*}\|^{2}+2\lambda[F(x^{0})-F(x^{*})]\Bigr\}.\label{eq:fkhat}
\end{align}
We next derive an upper-bound on $\Ebb\|x^{\hat{k}}-x^{k^{*}}\|^{2}$.
Taking expectation over $x^{k}$, we have
\begin{align}
\Ebb\|x^{\hat{k}}-x^{\hat{k}^{*}}\|^{2} & \le\tfrac{1}{\wtil L_{\min}}\Ebb\|x^{\hat{k}}-x^{\hat{k}^{*}}\|_{\wtil{[1]}}^{2}\nonumber \\
 & \le\tfrac{2}{\tilde{\mu}_{1}\wtil L_{\min}}\Ebb[F_{\hat{k}-1}(x^{\hat{k}})-F_{\hat{k}-1}(x^{\hat{k}^{*}})]\nonumber \\
 & \le\tfrac{2\lambda}{\tilde{\mu}_{1}\wtil L_{\min}}\bigl[F_{\hat{k}-1}(x^{\hat{k}-1})-F_{\hat{k}-1}(x^{\hat{k}^{*}})+\tfrac{\tilde{\mu}_{1}}{2}\|x^{\hat{k}-1}-x^{\hat{k}^{*}}\|_{\wtil{[1]}}^{2}\bigr]\nonumber \\
 & \le\tfrac{4\kappa\lambda}{\mu}\bigl[F_{\hat{k}-1}(x^{\hat{k}-1})-F_{\hat{k}-1}(x^{\hat{k}^{*}})\bigr].\label{eq:dist-to-near-stat}
\end{align}
In addition, by the optimality of $x^{\hat{k}^{*}}$ we have $0\in\partial F(x^{\hat{k}^{*}})+2\mu(x^{\hat{k}^{*}}-x^{\hat{k}-1})$,
it follows that
\begin{align}
\Ebb\bigl[\dis(0,\partial F(x^{\hat{k}^{*}}))\bigr]^{2} & \le4\mu^{2}\Ebb\bigl[\|x^{\hat{k}^{*}}-x^{\hat{k}-1}\|^{2}\bigr]\le\tfrac{4\mu^{2}}{\wtil L_{\min}}\Ebb\bigl[\|x^{\hat{k}-1}-x^{\hat{k}^{*}}\|_{\tone}^{2}\bigr] \nonumber \\
 & \le 8\mu\kappa\Ebb\bigl[F_{\hat{k}-1}(x^{\hat{k}-1})-F_{\hat{k}-1}(x^{\hat{k}^{*}})\bigr].\label{eq:dist-stationary}
\end{align}
Consequently, we obtain (\ref{eq:acpp-pt-dist1}) by combining (\ref{eq:fkhat})
with (\ref{eq:dist-to-near-stat}) and obtain (\ref{eq:acpp-subdiff-bound1})
by combining (\ref{eq:fkhat}) with (\ref{eq:dist-stationary}).

Finally, given that $t\ge\bigl\lceil-\eta^{-1}\ln\wtil{\lambda}\bigr\rceil$
with $\wtil{\lambda}=\min\left\{ \frac{1}{4},\frac{\mu^{2}}{L^{2}},\frac{\mu}{2L}\right\} $,
we have $\lambda\le\exp(-\eta t)\le\wtil{\lambda}$, and therefore,
$(1-2\lambda)^{-1}\le2$. The results (\ref{eq:acpp-pt-dist2}) and
(\ref{eq:acpp-subdiff-bound2}) follow immediately.
\end{proof}

\subsection*{Unconstrained smooth weakly-convex optimization}
In Algorithm \ref{alg:acd-pp-smooth}, we describe a version of ACPP
for unconstrained smooth nonconvex optimization; this is a special
case of Problem \eqref{main-problem} with $\phi(x)$ being void. Since $F_{k}(\cdot)$
is a smooth and strongly convex function, it can be more efficiently
optimized by ACD with importance sampling (\cite{allen-zhu2016even,nesterov2017efficiency}).
For the sake of completeness, we provide an extension of APCG for
smooth optimization with non-uniform sampling in Algorithm \ref{alg:ACD-nuni}.
We state the convergence result in the following theorem and present
the formal proof in the appendix for brevity.

\begin{algorithm2e}[H]
\KwIn{$x_{0}$, $\mu$, $t$, $s$;}

\For{k=0,1,2,...,K}{

Set $F_{k}(x)=F(x)+\mu\|x-x^{k}\|^{2}$\;

Obtain $x^{k+1}$ from  Algorithm \ref{alg:ACD-nuni} with
input $F_{k}(x)$, $x^{k}$ , $\wtil{\mu}_{s}$ , $s$ and $t$\;

}

Choose $\hat{k}$ from $\{2,3,...,K+1\}$ uniformly at random\;

\KwOut{$x^{\hat{k}}$.}

\caption{ACPP for smooth nonconvex optimization \label{alg:acd-pp-smooth}}
\end{algorithm2e}

\begin{algorithm2e}[H]
\KwIn{A smooth convex function $f(x)$, $x_{0}$, $\mu_{s}$, $s$,
$K$;}

\For{k=0,1,2,...K-1}{

Set $y^{k}=(1-\alpha_{k})x^{k}+\alpha_{k}z^{k}$\;

Sample random block $i_{k}\in\left\{ 1,2,3,...,m\right\} $ with probability
$p_{i}\propto L_{i}^{(1-s)/2}$\;

\[
z^{k+1}=\argmin_{x}\Bigl\{\bigl\langle\nabla_{i_{k}}f(y^{k}),x_{\i}\bigr\rangle+\frac{p_{i_{k}}\gamma}{2}\|x-y^{k}\|_{[s]}^{2}+\frac{p_{i_{k}}\beta_{k}}{2}\|x-z^{k}\|_{[s]}^{2}\Bigr\}.
\]
Set
\[
\bar{x}^{k+1}=y^{k}+\frac{\alpha_{k}}{p_{i_{k}}}\Ubf_{\ik}(z_{\ik}^{k+1}-z_{\ik}^{k});
\]

Option I: $x_{\ik}^{k+1}=\argmin_{x}\left\{ \left\langle \nabla_{i_{k}}f(\bar{x}^{k+1}),x\right\rangle +\frac{L_{i_{k}}}{2}\|x-\bar{x}_{\ik}^{k+1}\|_{\ik}^{2}\right\} $
\;

and $x_{j}^{k+1}=\bar{x}_{j}^{k+1}$ for $j\neq i_{k}$\;

Option II: $x^{k+1}=\bar{x}^{k+1}$\;

}

\KwOut{$x^{K}$\;}\caption{ACD for smooth convex optimization\label{alg:ACD-nuni}}
\end{algorithm2e}

\begin{theorem}[Informal]
\label{thm:acd-smooth-informal}Assume that $f(x)$ is
smooth with block Lipschitz constant $\left\{ L_{i}\right\} _{1\le i\le m}$
and is $\mu_{s}$-strongly convex with norm $\|\cdot\|_{[s]}$, where
$s\in[0,1]$. Denote $T_{(1-s)/2}=\sum_{i=1}^{m}L_{i}^{(1-s)/2}$.
If the coordinates are sampled with probability $p_{i}\propto L_{i}^{(1-s)/2}$,
then we have
\[
\Ebb[f(x^{K})-f(x^{*})]\le\left(1-\tfrac{\sqrt{\mu_{s}}}{\sqrt{\mu_{s}}+T_{(1-s)/2}}\right)^{K}\left[f(x^{0})-f(x^{*})+\tfrac{\mu_{s}}{2}\|x-x^{0}\|_{[s]}^{2}\right].
\]
\end{theorem}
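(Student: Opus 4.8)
The plan is to adapt the estimate‑sequence argument behind APCG in \cite{lin2015an} (which underlies Theorem~\ref{thm:apcg}) to the smooth, non‑uniformly sampled setting, replacing the norm $\norm{\cdot}_{\wtil{[1]}}$ by $\norm{\cdot}_{[s]}$ throughout. Since $\phi$ is void here, the $z$‑update in Algorithm~\ref{alg:ACD-nuni} is an exact weighted least‑squares step, and, as $f$ is $\mu_{s}$‑strongly convex with respect to $\norm{\cdot}_{[s]}$, the accelerated recursion closes with \emph{constant} parameters $\alpha_{k}\equiv\alpha$, $\gamma_{k}\equiv\gamma$, $\beta_{k}\equiv\beta$ obeying the usual acceleration identities; in this geometry they collapse to $\alpha=\sqrt{\mu_{s}}/(\sqrt{\mu_{s}}+T_{(1-s)/2})$, which is precisely the announced contraction factor $1-\alpha$, while $\gamma$ can be taken to be $\mu_{s}$ so that the natural Lyapunov coefficient below is $\mu_{s}/2$.

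The core is a one‑step decrease, in conditional expectation, of the potential $\Phi_{k}:=f(x^{k})-f(x^{*})+\tfrac{\mu_{s}}{2}\norm{z^{k}-x^{*}}_{[s]}^{2}$, built from three ingredients. \emph{(i)} Block‑wise smoothness of $f$ along the single updated block $\ik$, together with the extra coordinate gradient step of Option~I (which only lowers $f$ further, so Option~II is the worst case) and the identity $\bar x^{k+1}_{\ik}-y^{k}_{\ik}=\tfrac{\alpha}{p_{\ik}}(z^{k+1}_{\ik}-z^{k}_{\ik})$, bound $f(x^{k+1})$ by $f(y^{k})$ plus the linear term $\tfrac{\alpha}{p_{\ik}}\langle\nabla_{\ik}f(y^{k}),z^{k+1}_{\ik}-z^{k}_{\ik}\rangle$ and the quadratic term $\tfrac{\alpha^{2}L_{\ik}}{2p_{\ik}^{2}}\norm{z^{k+1}_{\ik}-z^{k}_{\ik}}_{\ik}^{2}$. \emph{(ii)} The optimality of $z^{k+1}$ in its separable, strongly convex subproblem---an exact three‑point identity because $\phi$ is void---converts $\tfrac{\alpha}{p_{\ik}}\langle\nabla_{\ik}f(y^{k}),z^{k+1}_{\ik}-x^{*}_{\ik}\rangle$ into squared‑distance terms among $z^{k}$, $z^{k+1}$ and the averaging point $\bar w$ introduced below, plus squared increments. \emph{(iii)} Strong convexity of $f$ at $y^{k}$, evaluated at $x^{*}$, combined with $y^{k}=(1-\alpha)x^{k}+\alpha z^{k}$---which splits $\langle\nabla f(y^{k}),x^{*}-y^{k}\rangle$ into a piece against $x^{k}-y^{k}$, absorbed by convexity into $f(x^{k})-f(y^{k})$, and a piece against $z^{k}-y^{k}$---supplies the remaining linear terms. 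Adding \emph{(i)}--\emph{(iii)} so that the $\langle\nabla f(y^{k}),\cdot\rangle$ contributions cancel leaves a recursion in which only squared‑norm terms remain to be balanced.

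The decisive step---and the one I expect to be the main obstacle---is taking the conditional expectation over $\ik$ and checking that the weights $p_{i}\propto L_{i}^{(1-s)/2}$ make everything fit. Let $\bar w$ be the deterministic point produced by the dense $(\gamma,\beta)$‑averaging part of the update, so that $z^{k+1}$ equals $\bar w$ off block $\ik$ and a single‑coordinate prox perturbation of $\bar w$ on block $\ik$; that perturbation scales like $1/(p_{i}L_{i}^{s})$, so multiplying by $p_{i}$ (the probability of hitting block $i$) makes the corresponding cross terms $p$‑independent---they recombine into a weight‑free inner product against $\nabla f(y^{k})$---while the $1/p_{i}$ factors in the quadratic terms cancel against the block Lipschitz constants $L_{i}$ and the $[s]$‑norm weights $L_{i}^{s}$ to produce exactly the combination $T_{(1-s)/2}^{2}$ that appears in the rate. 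This is what pins $\alpha$ to $\sqrt{\mu_{s}}/(\sqrt{\mu_{s}}+T_{(1-s)/2})$: the $\alpha^{2}T_{(1-s)/2}^{2}$‑sized quadratic error from \emph{(i)} must be dominated by the quadratic decrease extracted in \emph{(ii)}. Verifying that all residual quadratics are non‑positive under this tuning---and that the dense averaging does not spoil the telescoping of the squared‑distance terms, which requires carrying $z^{k}$ in the factored form used in \cite{lin2015an}---is the bookkeeping‑heavy part. Once $\Ebb_{\ik}[\Phi_{k+1}]\le(1-\alpha)\Phi_{k}$ is in hand, unrolling over $k=0,\dots,K-1$, taking total expectation, discarding the nonnegative $\tfrac{\mu_{s}}{2}\norm{z^{K}-x^{*}}_{[s]}^{2}$ and using $z^{0}=x^{0}$ gives $\Phi_{0}=f(x^{0})-f(x^{*})+\tfrac{\mu_{s}}{2}\norm{x^{0}-x^{*}}_{[s]}^{2}$ and hence the claimed inequality.
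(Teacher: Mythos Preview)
Your plan is correct and follows the paper's appendix proof closely: the same three ingredients (block smoothness at $y^k$, three-point optimality of the $z$-update, $\mu_s$-strong convexity at $y^k$), the same constant parameter tuning $\alpha=\sqrt{\mu_s}/(\sqrt{\mu_s}+T_{(1-s)/2})$, $\beta=\sqrt{\mu_s}\,T_{(1-s)/2}$, $\gamma=\mu_s$, and the same Lyapunov telescoping with coefficient $\mu_s/2$ on $\|z^k-x^*\|_{[s]}^2$.

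The one place you diverge is in handling the full-dimensional $z$-subproblem. You propose to split $z^{k+1}$ into the deterministic averaged point $\bar w=(\gamma y^k+\beta z^k)/(\gamma+\beta)$ plus a single-block perturbation and track distances to $\bar w$. The paper instead applies Lemma~\ref{lem:optimality-prox-map} directly to the full subproblem with its two quadratic centers, obtaining in one stroke
\[
\tfrac{\alpha}{p_{\ik}}\langle\nabla_{\ik}f(y^k),z^{k+1}_{\ik}-x_{\ik}\rangle \le \tfrac{\alpha\gamma}{2}\bigl[\|x-y^k\|_{[s]}^2-\cdots\bigr]+\tfrac{\alpha\beta}{2}\bigl[\|x-z^k\|_{[s]}^2-\cdots\bigr],
\]
and bridges the gap between the single-block linear term from ingredient~(i) and this full-dimensional bound by inserting the two zero-mean corrections $\alpha\langle\nabla f(y^k),\tfrac{1}{p_{\ik}}\Ubf_{\ik}x_{\ik}-x\rangle$ and $\alpha\langle\nabla f(y^k),z^k-\tfrac{1}{p_{\ik}}\Ubf_{\ik}z^k_{\ik}\rangle$, which vanish under $\Ebb_{\ik}$. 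This sidesteps entirely the dense-averaging bookkeeping you anticipate as the main obstacle; no factored representation of $z^k$ is needed for the \emph{proof} (that is purely an efficient-implementation device in \cite{lin2015an}). Your $\bar w$-based route would also close, but is more laborious than necessary.
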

Theorem \ref{thm:acd-smooth-informal} describes the efficiency of
solving the subproblems of ACPP using ACD method. Incorporating this
result, we present the overall complexity result of Algorithm \ref{alg:acd-pp-smooth}
in the following theorem.
\begin{theorem}
\label{thm:acpp} Assume that $f(x)$ is $\mu$-weakly convex and
that $\phi(x)=0$ in Problem (\ref{eq:weakly-convex}). In Algorithm
\ref{alg:acd-pp-smooth}, let $\kappa_{s}=\max_{i}\wtil L_{i}^{s}/\min_{i}\wtil L_{i}^{s}$,
$T_{(1-s)/2}=\sum_{i=1}^{m}\wtil L_{i}^{(1-s)/2}$, $\wtil{\mu}_{s}={\mu}/{\wtil L_{\max}^{s}}$,
$\eta=\sqrt{\tilde{\mu}_{s}}/\bigl(\sqrt{\wtil{\mu}_{s}}+T_{(1-s)/2}\bigr)$,
and assume that $\lambda=(1-\eta)^{t}<\frac{1}{2}$, we have
\begin{equation}
\Ebb\|\nabla F(x^{\hat{k}})\|^{2}\le\tfrac{8\lambda L^{2}\kappa_{s}+16\mu^{2}\kappa_{s}}{K\mu(1-2\lambda)}\left[\mu\|x^{0}-x^{*}\|^{2}+2\lambda[F(x^{0})-F(x^{*})]\right].\label{eq:smooth-acpp-convg-main}
\end{equation}
In particular, if we set $t\ge\lceil-\eta^{-1}\ln\wtil{\lambda}\rceil$
with $\wtil{\lambda}=\min\{\frac{1}{8},\frac{\mu}{4L},\frac{\mu^{2}}{L^{2}}\}$,
in order to guarantee that $\Ebb\|\nabla F(x^{\hat{k}})\|^{2}\le\vep$,
the total number of block gradient updates is at most 
\[
N_{\vep}=\Ocal\left(\left(1+\wtil L_{\max}^{s}T_{(1-s)/2}/\sqrt{\wtil{\mu}_s}\right)\log\left(\max\bigl\{8,\tfrac{4L}{\mu},\tfrac{L^{2}}{\mu^{2}}\bigr\}\right)\left(\tfrac{40\mu^2\kappa_{s}\|x^{0}-x^{*}\|^{2}}{\varepsilon}+1\right)\right).
\]
\end{theorem}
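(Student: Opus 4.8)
The plan is to reuse, almost verbatim, the telescoping bookkeeping from the proof of Theorem~\ref{thm:acd-composi} (the general ACPP analysis), and then add one extra step that converts the accumulated decrease of the subproblem values into a bound on $\|\nabla F\|^{2}$ via the $L$-smoothness of $F=f$. Write $x^{k+1^{*}}=\argmin_{x}F_{k}(x)$. Since $f$ is $\mu$-weakly convex and $\phi=0$, $F_{k}(x)=f(x)+\mu\|x-x^{k}\|^{2}$ is $\mu$-strongly convex w.r.t.\ $\|\cdot\|$, hence (as recorded in Section~\ref{sec:acd-pp}) $\wtil{\mu}_{s}$-strongly convex w.r.t.\ $\|\cdot\|_{\ts}$ with $\wtil{\mu}_{s}\wtil L_{\min}^{s}=\mu/\kappa_{s}$, and block-wise Lipschitz smooth with $\wtil L_{i}=L_{i}+2\mu$. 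Running Algorithm~\ref{alg:ACD-nuni} for $t$ steps and invoking Theorem~\ref{thm:acd-smooth-informal} on the $k$-th subproblem gives $\Ebb[F_{k}(x^{k+1})-F_{k}(x^{k+1^{*}})]\le\lambda\bigl[F_{k}(x^{k})-F_{k}(x^{k+1^{*}})+\tfrac{\wtil{\mu}_{s}}{2}\|x^{k}-x^{k+1^{*}}\|_{\ts}^{2}\bigr]\le2\lambda\bigl[F_{k}(x^{k})-F_{k}(x^{k+1^{*}})\bigr]$, using strong convexity of $F_{k}$ in the last step; this is precisely the subproblem estimate used in Theorem~\ref{thm:acd-composi}, only with its APCG rate replaced by the non-uniform ACD rate, so that $\lambda=(1-\eta)^{t}$ with the current $\eta=\sqrt{\wtil{\mu}_{s}}/(\sqrt{\wtil{\mu}_{s}}+T_{(1-s)/2})$.

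With this estimate in hand, the argument establishing \eqref{eq:sum-sq-bound} in the proof of Theorem~\ref{thm:acd-composi} goes through unchanged: using $F_{k}(x^{k})=F(x^{k})\le F_{k-1}(x^{k})$, the subproblem estimate for index $k-1$, and strong convexity of $F_{k-1}$, one gets $\Ebb[F_{k}(x^{k})-F_{k}(x^{k+1^{*}})]\le\Ebb[F_{k-1}(x^{k^{*}})-F_{k}(x^{k+1^{*}})]+2\lambda\,\Ebb[F_{k-1}(x^{k-1})-F_{k-1}(x^{k^{*}})]$. Summing over $k=1,\dots,K$, telescoping, and inserting $F_{0}(x^{1^{*}})\le F(x^{*})+\mu\|x^{0}-x^{*}\|^{2}$, $F_{K}(x^{K+1^{*}})\ge\min_{x}F(x)=F(x^{*})$, $F_{0}(x^{0})=F(x^{0})$, I would obtain $\tsum_{k=1}^{K}\Ebb[F_{k}(x^{k})-F_{k}(x^{k+1^{*}})]\le\tfrac{1}{1-2\lambda}\{\mu\|x^{0}-x^{*}\|^{2}+2\lambda[F(x^{0})-F(x^{*})]\}$; dividing by $K$ and using that $\hat{k}-1$ is uniform on $\{1,\dots,K\}$ yields $\Ebb[F_{\hat{k}-1}(x^{\hat{k}-1})-F_{\hat{k}-1}(x^{\hat{k}^{*}})]\le\tfrac{1}{K(1-2\lambda)}\{\mu\|x^{0}-x^{*}\|^{2}+2\lambda[F(x^{0})-F(x^{*})]\}$.

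The only genuinely new step is the passage to the gradient norm. Repeating the computations \eqref{eq:dist-to-near-stat} and \eqref{eq:dist-stationary}, but with $\|\cdot\|_{\ts}$ in place of $\|\cdot\|_{\tone}$ and $\kappa_{s}$ in place of $\kappa$, gives $\Ebb\|x^{\hat{k}}-x^{\hat{k}^{*}}\|^{2}\le\tfrac{4\kappa_{s}\lambda}{\mu}\,\Ebb[F_{\hat{k}-1}(x^{\hat{k}-1})-F_{\hat{k}-1}(x^{\hat{k}^{*}})]$ and, from the optimality relation $\nabla F(x^{\hat{k}^{*}})=-2\mu(x^{\hat{k}^{*}}-x^{\hat{k}-1})$ (here $\partial F=\{\nabla F\}$ because $\phi=0$), $\|\nabla F(x^{\hat{k}^{*}})\|^{2}=4\mu^{2}\|x^{\hat{k}^{*}}-x^{\hat{k}-1}\|^{2}\le 8\mu\kappa_{s}\,[F_{\hat{k}-1}(x^{\hat{k}-1})-F_{\hat{k}-1}(x^{\hat{k}^{*}})]$. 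Then $L$-smoothness of $F$ yields $\|\nabla F(x^{\hat{k}})\|^{2}\le2L^{2}\|x^{\hat{k}}-x^{\hat{k}^{*}}\|^{2}+2\|\nabla F(x^{\hat{k}^{*}})\|^{2}$; taking expectations, substituting the last two bounds, and plugging in the estimate on $\Ebb[F_{\hat{k}-1}(x^{\hat{k}-1})-F_{\hat{k}-1}(x^{\hat{k}^{*}})]$ from the previous paragraph gives exactly \eqref{eq:smooth-acpp-convg-main}.

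For the complexity statement I would set $t\ge\lceil-\eta^{-1}\ln\wtil{\lambda}\rceil$ with $\wtil{\lambda}=\min\{\tfrac18,\tfrac{\mu}{4L},\tfrac{\mu^{2}}{L^{2}}\}$, so $\lambda\le e^{-\eta t}\le\wtil{\lambda}$; then $(1-2\lambda)^{-1}\le\tfrac43$, $8\lambda L^{2}\kappa_{s}\le8\mu^{2}\kappa_{s}$, and, since $F(x^{0})-F(x^{*})\le\tfrac{L}{2}\|x^{0}-x^{*}\|^{2}$ (because $\nabla f(x^{*})=0$ at a global minimizer of the $L$-smooth $f$) and $\lambda\le\mu/(4L)$, the right-hand side of \eqref{eq:smooth-acpp-convg-main} is at most $\tfrac{40\mu^{2}\kappa_{s}}{K}\|x^{0}-x^{*}\|^{2}$. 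Taking $K=\lceil40\mu^{2}\kappa_{s}\|x^{0}-x^{*}\|^{2}/\vep\rceil$ forces $\Ebb\|\nabla F(x^{\hat{k}})\|^{2}\le\vep$; since each of the $K$ outer iterations costs $\Ocal(t)$ block-gradient evaluations with $t\in\Ocal(\eta^{-1}\ln(1/\wtil{\lambda}))$, $\eta^{-1}=1+T_{(1-s)/2}/\sqrt{\wtil{\mu}_{s}}$, and $1/\wtil{\lambda}=\max\{8,4L/\mu,L^{2}/\mu^{2}\}$, the stated bound on $N_{\vep}$ follows. I do not expect a conceptual obstacle: the telescoping skeleton is identical to Theorem~\ref{thm:acd-composi}, so the real work is just (i) invoking the non-uniform ACD guarantee of Theorem~\ref{thm:acd-smooth-informal} with the correct $\lambda$, (ii) carrying the weighted-norm constants $\kappa_{s}$ and $\wtil{\mu}_{s}\wtil L_{\min}^{s}=\mu/\kappa_{s}$ through each inequality, and (iii) choosing $\wtil{\lambda}$ so that each of the three error contributions in \eqref{eq:smooth-acpp-convg-main} is dominated by the $\mu\|x^{0}-x^{*}\|^{2}$ term.
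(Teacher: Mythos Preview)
Your proposal is correct and follows essentially the same approach as the paper: the telescoping of the subproblem gaps $F_{k}(x^{k})-F_{k}(x^{k+1^{*}})$ via the inner ACD guarantee, the bounds $F_{0}(x^{1^{*}})\le F(x^{*})+\mu\|x^{0}-x^{*}\|^{2}$ and $F_{K}(x^{K+1^{*}})\ge F(x^{*})$, and the passage to $\|\nabla F\|^{2}$ through $\|\nabla F(x^{k+1})\|^{2}\le 2L^{2}\|x^{k+1}-x^{k+1^{*}}\|^{2}+2\|\nabla F(x^{k+1^{*}})\|^{2}$ together with $\nabla F(x^{k+1^{*}})=2\mu(x^{k}-x^{k+1^{*}})$ are exactly what the paper does. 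The only cosmetic difference is ordering: the paper bounds $\|\nabla F(x^{k+1})\|^{2}$ pointwise in $k$, sums, and then averages via $\hat{k}$, whereas you first average the subproblem gap at $\hat{k}$ and then convert to a gradient bound; the resulting constants are identical.
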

\begin{proof}
Similar to relation (\ref{eq:sum-sq-bound}), we have 
\begin{align*}
&\Ebb\Bigl[F_{k}(x^{k})-F_{k}(x^{k+1^{*}})\Bigr]\\ 
&\le\Ebb\Bigl[F_{k-1}(x^{k^{*}})-F_{k}(x^{k+1^{*}})\Bigr]+2\lambda\Ebb\left[F_{k-1}(x^{k-1})-F_{k-1}(x^{k^{*}})\right].
\end{align*}
Summing up the above inequality over $k=1,2,3,...,K$ and rearranging
the terms accordingly, we arrive at
\begin{align}
& (1-2\lambda)\tsum_{k=1}^{K}\Ebb\bigl[F_{k}(x^{k})-F_{k}(x^{k+1^{*}})\bigr] \nonumber \\
& \le F_{0}(x^{1^{*}})-\Ebb\bigl[F_{K}(x^{K+1^{*}})\bigr]+2\lambda\Ebb\bigl[F_{0}(x^{0})-F_{0}(x^{1^{*}})\bigr]\nonumber \\
 & \le \mu\,\|x^{0}-x^{*}\|^{2}+2\lambda[F(x^{0})-F(x^{*})].\label{fk-res-sum}
\end{align}
In addition, taking the expectation conditioned on $x^{k}$, we have
\begin{align}
\Ebb\|x^{k+1}-x^{k+1^{*}}\|_{\ts}^{2} & \le\tfrac{2}{\wtil{\mu}_{s}}\Ebb[F_{k}(x^{k+1})-F_{k}(x^{k+1^{*}})]\nonumber \\
 & \le\tfrac{2\lambda}{\wtil{\mu}_{s}}\bigl[F_{k}(x^{k})-F_{k}(x^{k+1^{*}})+\tfrac{\tilde{\mu}_{s}}{2}\|x^{k}-x^{k+1^{*}}\|_{\ts}^{2}\bigr].\nonumber \\
\text{} & \le\tfrac{4\lambda}{\wtil{\mu}_{s}}\bigl[F_{k}(x^{k})-F_{k}(x^{k+1^{*}})\bigr].\label{eq:res-xk-bound}
\end{align}
For the $k$-th subproblem, we have 
\begin{align}
\|\nabla F(x^{k+1})\|^{2} & \le2\|\nabla F(x^{k+1})-\nabla F(x^{k+1^{*}})\|^{2}+2\|\nabla F(x^{k+1^{*}})\|^{2}\nonumber \\
 & \le2L^{2}\,\|x^{k+1}-x^{k+1^{*}}\|^{2}+8\mu^{2}\|x^{k}-x^{k+1^{*}}\|^{2}\nonumber \\
 & \le\tfrac{2L^{2}}{\wtil L_{\min}^{s}}\|x^{k+1}-x^{k+1^{*}}\|_{\ts}^{2}+\tfrac{8\mu^{2}}{\wtil L_{\min}^{s}}\|x^{k}-x^{k+1^{*}}\|_{\ts}^{2}\nonumber \\
 & \le\tfrac{8\lambda L^{2}\kappa_{s}+16\mu^{2}\kappa_{s}}{\mu}\bigl[F_{k}(x^{k})-F_{k}(x^{k+1^{*}})\bigr].\label{bound-grad-sq}
\end{align}
Here, the first inequality uses $\|a+b\|^{2}\le2\|a\|^{2}+2\|b\|^{2}$,
the second inequality uses Lipschitz continuity of $\nabla F(x)$
and the optimality condition $\nabla F\bigl(x^{k+1^{*}}\bigr)=2\mu\bigl(x^{k}-x^{k+1^{*}}\bigr)$,
the third inequality uses the relation $\|x\|_{\text{\ensuremath{\ts}}}^{2}\ge\wtil L_{\min}^{s}\|x\|^{2}$,
and the fourth inequality follows from (\ref{eq:res-xk-bound}) and
$\|x^{k}-x^{k+1^{*}}\|_{\ts}^{2}\le\frac{2}{\tilde{\mu}_{s}}\bigl[F_{k}(x^{k})-F_{k}(x^{k+1^{*}})\bigr]$.

Summing up the relation (\ref{bound-grad-sq}) over $k=0,1,2,...$
and then combining it with (\ref{fk-res-sum}), we have 
\[
\tsum_{k=1}^{K}\Ebb\|\nabla F(x^{k+1})\|^{2}\le\tfrac{8\lambda L^{2}\kappa_{s}+16\mu^{2}\kappa_{s}}{\mu(1-2\lambda)}\left[\mu\|x^{0}-x^{*}\|^{2}+2\lambda[F(x^{0})-F(x^{*})]\right].
\]
The result (\ref{eq:smooth-acpp-convg-main}) immediately follows.

In addition, $\lambda\le(1-\eta)^{t}\le\min\Bigl\{\tfrac{1}{8},\tfrac{\mu}{4L},\tfrac{\mu^{2}}{L^{2}}\Bigr\}$,
then we have
\begin{align*}
\Ebb\|\nabla F(x^{\hat{k}})\|^{2} & =\tfrac{1}{K}\tsum_{k=1}^{K}\Ebb\|\nabla F(x^{k+1})\|^{2}\le\tfrac{8\lambda L^{2}\kappa_{s}+16\mu^{2}\kappa_{s}}{\mu(1-2\lambda)K}(\mu+\lambda L)\|x^{0}-x^{*}\|^{2}\\
& \le\tfrac{40\mu^2\kappa_{s}}{K}\|x^{0}-x^{*}\|^{2},
\end{align*}
where the first inequality uses $F(x^{0})-F(x^{*})\le\frac{L}{2}\|x^{0}-x^{*}\|^{2}$.

Next we estimate the total number of block gradient computations $N_{\vep}$
for some $\varepsilon$-accurate solution. Let $K\ge\frac{40\mu\kappa_{s}}{\varepsilon}\|x^{0}-x^{*}\|^{2}$.
It suffices to choose
\[
N_{\vep}=t(K+1)\ge\left(1+\wtil L_{\max}^{s}T_{(1-s)/2}/\sqrt{\wtil{\mu}_s}\right)\log\left(\max\bigl\{8,\tfrac{4L}{\mu},\tfrac{L^{2}}{\mu^{2}}\bigr\}\right)\left(\tfrac{40\mu^2\kappa_{s}\|x^{0}-x^{*}\|^{2}}{\varepsilon}+1\right).
\]
\end{proof}
The above theorem describes the iteration complexity of ACD to obtain
an approximate stationary point when sampling probability takes the
form $p_{i}\propto\wtil L_{i}^{(1-s)/2}$. To the best of our knowledge,
the best rate of ACD (\cite{nesterov2017efficiency,allen-zhu2016even})
is achieved for $s=0$. In the following result, we develop specific
complexity result using such sampling strategy.
\begin{corollary}
\label{thm:smooth-acpp}If in Algorithm \ref{alg:acd-pp}, we set $s=0$ and choose sample probability $p_{i}\propto\sqrt{\wtil L_{i}}$,
the total number of block gradient computations is bounded by 
\begin{equation}
N_{\varepsilon}=\Ocal\left(\bigl(\tsum_{i=1}^{m}\sqrt{L_{i}}\bigr)\sqrt{\mu}\log\left(\tfrac{L}{\mu}\right)\tfrac{\mu\|x^{0}-x^{*}\|^{2}}{\varepsilon}\right).\label{eq:complex-smooth-uncon}
\end{equation}
\end{corollary}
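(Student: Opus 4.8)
The plan is to obtain the corollary as the specialization of Theorem~\ref{thm:acpp} to the exponent $s=0$, followed by a simplification of the constants under the standing ill-conditioning hypothesis $\mu\ll L_i$ ($i\in[m]$). First I would evaluate every quantity entering the complexity bound of Theorem~\ref{thm:acpp} at $s=0$. Since $\wtil L_i^{0}=1$ for all $i$, we get $\kappa_0=\max_i\wtil L_i^{0}/\min_i\wtil L_i^{0}=1$ and $\wtil L_{\max}^{0}=1$, hence $\wtil\mu_0=\mu/\wtil L_{\max}^{0}=\mu$; moreover $T_{(1-s)/2}=T_{1/2}=\sum_{i=1}^{m}\wtil L_i^{1/2}=\sum_{i=1}^{m}\sqrt{L_i+2\mu}$, and the sampling rule $p_i\propto\wtil L_i^{(1-s)/2}=\sqrt{\wtil L_i}$ is exactly the one prescribed in the statement. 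Substituting these values into the bound of Theorem~\ref{thm:acpp} yields
\[
N_\varepsilon=\Ocal\left(\left(1+\tfrac{\sum_{i=1}^{m}\sqrt{L_i+2\mu}}{\sqrt\mu}\right)\log\left(\max\bigl\{8,\tfrac{4L}{\mu},\tfrac{L^{2}}{\mu^{2}}\bigr\}\right)\left(\tfrac{40\mu^{2}\|x^{0}-x^{*}\|^{2}}{\varepsilon}+1\right)\right).
\]

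Next I would bound each of the three factors. For the first, $2\mu\le L_i$ gives $\sqrt{L_i+2\mu}\le\sqrt2\,\sqrt{L_i}$, so $\sum_i\sqrt{L_i+2\mu}\le\sqrt2\sum_i\sqrt{L_i}$; and since $\tfrac{1}{\sqrt\mu}\sum_i\sqrt{L_i+2\mu}\ge\sqrt{L_{\max}/\mu}\ge1$, the factor $1+\tfrac{\sum_i\sqrt{L_i+2\mu}}{\sqrt\mu}$ is $\Ocal\bigl(\tfrac{1}{\sqrt\mu}\sum_i\sqrt{L_i}\bigr)$. For the logarithmic factor, in the ill-conditioned regime $L/\mu$ is large, so $\max\{8,4L/\mu,L^{2}/\mu^{2}\}=L^{2}/\mu^{2}$ and the factor equals $2\log(L/\mu)=\Ocal(\log(L/\mu))$. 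For the last factor, in the relevant accuracy range where $\varepsilon\le\mu^{2}\|x^{0}-x^{*}\|^{2}$ we have $\tfrac{40\mu^{2}\|x^{0}-x^{*}\|^{2}}{\varepsilon}\ge40$, so the additive $1$ is absorbed and the factor is $\Ocal\bigl(\tfrac{\mu^{2}\|x^{0}-x^{*}\|^{2}}{\varepsilon}\bigr)$. Multiplying the three simplified bounds and using $\tfrac{\mu^{2}}{\sqrt\mu}=\sqrt\mu\cdot\mu$ gives $N_\varepsilon=\Ocal\bigl((\sum_{i=1}^{m}\sqrt{L_i})\sqrt\mu\,\log(L/\mu)\,\tfrac{\mu\|x^{0}-x^{*}\|^{2}}{\varepsilon}\bigr)$, which is \eqref{eq:complex-smooth-uncon}.

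I do not anticipate a conceptual obstacle here: the argument is arithmetic substitution into an already-established bound. The only points needing care are (i) verifying that $s=0$ collapses the block-heterogeneity constant $\kappa_s$ to $1$, so that no $\max/\min$ ratio of block constants survives in the final rate; (ii) invoking the ill-conditioning hypothesis $\mu\ll L_i$ to replace $\sqrt{\wtil L_i}=\sqrt{L_i+2\mu}$ by $\sqrt{L_i}$ up to a universal constant (so the sampling weights and $T_{1/2}$ agree with the statement) and to identify $\max\{8,4L/\mu,L^{2}/\mu^{2}\}$ with $L^{2}/\mu^{2}$; and (iii) recording the regime $\varepsilon=\Ocal(\mu^{2}\|x^{0}-x^{*}\|^{2})$ in which the additive constant is dominated, so that the bound attains the clean multiplicative form of \eqref{eq:complex-smooth-uncon}.
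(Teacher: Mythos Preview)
Your proposal is correct and follows the same approach as the paper: the paper's proof consists of the single sentence ``By setting $s=0$, we have $\kappa_{s}=1$. The result immediately follows,'' and you have made explicit precisely the arithmetic simplifications (replacing $\sqrt{L_i+2\mu}$ by $\sqrt{L_i}$, identifying the dominant term inside the logarithm, and absorbing the additive $1$) that the paper suppresses in that phrase. Your observation that $\kappa_0=1$ is the decisive simplification matches the paper exactly.
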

\begin{proof}
By setting $s=0$, we have $\kappa_{s}=1$. The result immediately
follows.
\end{proof}
\begin{remark}
It is interesting to compare ACPP and RCD for their iteration complexities.
Our earlier analysis in Section \ref{sec:rcd} implies that RCD requires
$\Ocal\bigl(\bigl(\sum_{i=1}^{m}L_{i}\bigr)/\varepsilon\bigr)$ block
gradient computations for achieving an $\varepsilon$-accurate solution.
In comparison, Theorem \ref{thm:smooth-acpp} implies that ACPP needs
much less number of such computations when the  problem is ill-conditioned
with block Lipschitz constants $L_{i}\gg\mu$ ($i\in[m]$). 
\end{remark}

\section{Applications\label{sec:Applications}}

\subsection{Sparsity-inducing machine learning\label{subsec:Sparse-learning}}

As a notable application of Problem \eqref{main-problem}, we consider
an important class of sparse learning problems that are described
in the following form:\begin{mini}
{x\in\Rbb^d}{f(x)}{}{}
\addConstraint{\|x\|_0}{\le k}{}
{\label{l0-constraint}}
\end{mini}Here, $f(x)$ is some loss function and the $l_{0}$ norm constraint
promotes sparsity. Due to the nonconvexity and noncontinuity of $l_{0}$-norm,
direct optimization of the above problem is generally intractable.
An alternative way is to translate (\ref{l0-constraint}) into a regularized
problem with some sparsity-inducing penalty $\psi(x)$.\begin{mini}
{x\in\Rbb^d}{f(x)+\psi(x)}{}{}
{\label{l0-penalty}}
\end{mini}While convex relaxation such as the $l_{1}$ penalty ($\psi(x)=\lambda\|x\|_{1}$)
has been widely studied in the literature, nonconvex regularization
has received increasing popularity recently (for example, see \cite{gong2013a,thi2015dc,gotoh2018dc}).
Here, we consider a wide class of nonconvex regularizers that take
the form of a difference-of-convex (DC) function. For example, SCAD
(\cite{fan2001variable,wen2018proximal,gong2013a}) penalty has the
separable form $\psi_{\lambda,\theta}(x)=\sum_{i=1}^{m}\left[\phi_{\lambda}(x_{\i})-h_{\lambda,\theta}(x_{\i})\right]$
where $\phi_{\lambda}(\cdot)$ and $h_{\lambda,\theta}(\cdot)$ are
defined by
{
\begin{equation}
\phi_{\lambda}(x)=\lambda|x_{\i}|,\quad\text{and}\quad h_{\lambda,\theta}(x)=\begin{cases}
0 & \text{if }|x|\le\lambda\\
\frac{x^{2}-2\lambda|x|+\lambda^{2}}{2(\theta-1)} & \text{if }\lambda<|x|\le\theta\lambda\\
\lambda|x|-\frac{1}{2}(\theta+1)\lambda^{2} & \text{if }|x|>\theta\lambda
\end{cases}\label{eq:scad}
\end{equation}
}respectively. It is routine to check that $h_{\lambda,\theta}(x)$
is Lipschitz smooth with constant $\frac{1}{\theta-1}$.

Another type of interesting sparsity-inducing regularizers arises
from direct reformulation of the $l_{0}$ term. For instance, the
constraint $\left\{ \|x_{0}\|\le k\right\} $ can be equivalently
expressed as $\{\|x\|_{1}-|||x|||{}_{k}=0\}$, where $|||x|||_{k}$
is the largest-$k$ norm, defined by $|||x|||_{k}=\sum_{i=1}^{d}\bigl|x_{j_{i}}\bigr|$,
where $j_{1},j_{2},...j_{d}$ are coordinate indices in descending
order of absolute value. \cite{gotoh2018dc} used this observation
when studying the following nonconvex composite problem
\begin{equation}
\min_{x\in\Rbb^{d}}F(x)=f(x)+\lambda\|x\|_{1}-\lambda|||x|||{}_{k}.\label{eq:relaxed-ls}
\end{equation}
Notice that existing nonconvex coordinate descent \cite{patrascu2015efficient,xu2017a}
are not applicable to (\ref{eq:relaxed-ls}), since the norm $|||\cdot|||_{k}$
is neither smooth nor separable. In contrast, our methods can be applied
to solve Problem (\ref{eq:relaxed-ls}) since the concave part of
the objective is allowed to be nonsmooth as well as inseparable. Moreover,
efficient implementations of our methods are viable: in RPCD and ACPDC,
full subgradient of $|||\cdot|||_{k}$ is computed only once a while;
in RCSD, block subgradient of $|||\cdot|||_{k}$ can be computed in
logarithmic time by maintaining the coordinates in max heaps. Therefore,
in all three algorithms, the overheads to compute subgradient are
negligible.

\subsection{Experiments \label{sec:Experiments}}

We conduct empirical experiments on the nonconvex problems described
in Subsection \ref{subsec:Sparse-learning}. We compare our algorithms
with two gradient-based methods. The first algorithm is the proximal
DC algorithm (pDCA, \cite{RN343,gotoh2018dc}) which performs a single
step of proximal gradient descent in each iteration. The second algorithm
is an enhanced proximal DC algorithm using extrapolation technique
($\text{pDCA}_{e}$ \cite{wen2018proximal}). In the paper \cite{wen2018proximal},
$\text{pDCA}_{e}$ has been reported to obtain better performance
than pDCA on a variety of nonconvex learning problems.

\paragraph{Datasets}

We use both synthetic and real data. The \texttt{synthetic}
dataset is based on the study in \cite{RN343}. Namely, we generate
an $n\times d$ matrix $A$, of which the rows are generated from
$d$-dim Gaussian distribution $\Ncal(0,\Sigma)$. Here, the covariance
matrix $\Sigma$ satisfies: $\Sigma_{ii}=1$ ($1\le i\le d$) and
$\Sigma_{ij}=0.7$ ($i\neq j$). We generate the true solution $x^{*}$
from a random binary vector, of which $s$ nonzeros are chosen uniformly
without replacement. Real data are collected from online repositories.
Among them, \texttt{E2006-tfidf}, \texttt{E2006-log1p, real-sim},
\texttt{news20.binary}, \texttt{mnist} are from \cite{CC01a}, and\texttt{
UJIndoorLoc},\texttt{ dexter} are from \cite{Dua:2019}. More details
are described in Table \ref{tab:Datasets}.

\paragraph*{Parameter setting}

In CD methods, the block number $m$ is set to 10000 for both \texttt{news20.binary}
and \texttt{E2006-log1p} and is set to $\min\{1000,d\}$ for the other
datasets. For both ACPDC and ACPP, we choose $t$ from the range $[m, t_0]$ and tune it
as a hyper-parameter. For ACPDC,
we set $\mu=0.01$ because this value consistently yields good
performance. In all the experiments, $x^{0}=0$ is used as the initial
point for all the algorithms. The performance of CD methods is reported
on the average of ten replications.

\begin{table}[H]
\centering{}%
\begin{tabular}{cccccc}
\toprule 
Datasets & $n$ & $d$ & $m$ & Sparsity & Problem type\tabularnewline
\midrule 
\texttt{synthetic} & 500 & 5000 & 1000 & 100\% & R\tabularnewline
\midrule 
\texttt{E2006-tfidf} & 16087 & 150360 & 1000 & 0.826\% & R\tabularnewline
\midrule 
\texttt{E2006-log1p} & 16087 & 4272227 & 10000 & 0.141\% & R\tabularnewline
\midrule 
\texttt{UJIndoorLoc} & 19937 & 554 & 554 & 50\% & R\tabularnewline
\midrule 
\texttt{resl-sim} & 72309 & 20959 & 1000 & 0.25\% & C\tabularnewline
\midrule 
\texttt{news20.binary} & 19996 & 1355191 & 10000 & 0.034\% & C\tabularnewline
\midrule 
\texttt{mnist} & 60000 & 718 & 718 & 21.02\% & C\tabularnewline
\midrule 
\texttt{dexter} & 600 & 20000 & 1000 & 0.848\% & C\tabularnewline
\bottomrule
\end{tabular}\caption{\label{tab:Datasets}Dataset description. $n$ is the number of samples,
$d$ is the feature dimension, $m$ is the block number. R stands
for regression and C stands for classification.}
\end{table}

\paragraph{Logistic loss + largest-$k$ norm penalty}

Our first experiment examines the performance of RCSD, RPCD and ACPDC
on logistic loss classification with largest-$k$ norm penalty:

\[
\min_{x\in\Rbb^{d}}\frac{1}{n}\sum_{i=1}^{n}\log(1+\exp(-b_{i}(a_{i}^{T}x)))+\frac{\rho}{d}\bigl(\|x\|_{1}-|||x|||{}_{k}\bigr).
\]
We use the following real datasets: \texttt{resl-sim}, \texttt{news20.binary}, \texttt{dexter} and \texttt{mnist}. 
For the \texttt{mnist} dataset, we formulate
a binary classification problem by labeling the digits 0, 4, 5, 6,
8  positive and all other digits  negative. We plot the function
objectives with respect to number of gradient evaluations for various
values of weight $\rho$; our results are shown in Figure \ref{fig:Exper-logistic}.

We now make a number of important observations from Figure \ref{fig:Exper-logistic}.
First, $\text{pDCA}_{e}$ consistently outperforms pDCA in all experiments.
This result suggests that, despite the unclear theoretical advantage
of $\text{pDCA}_{e}$ over pDCA, extrapolation indeed has empirical
advantage in nonsmooth and nonconvex optimization. Second, RPCD exhibits
at least the same (sometimes better) performance as RCSD, while both
RPCD and RCSD have superior performance when compared with gradient
methods. Third, ACPDC achieves the best performance among all the
tested algorithms.

\begin{figure}[h]
\begin{centering}
\includegraphics[scale=0.24]{./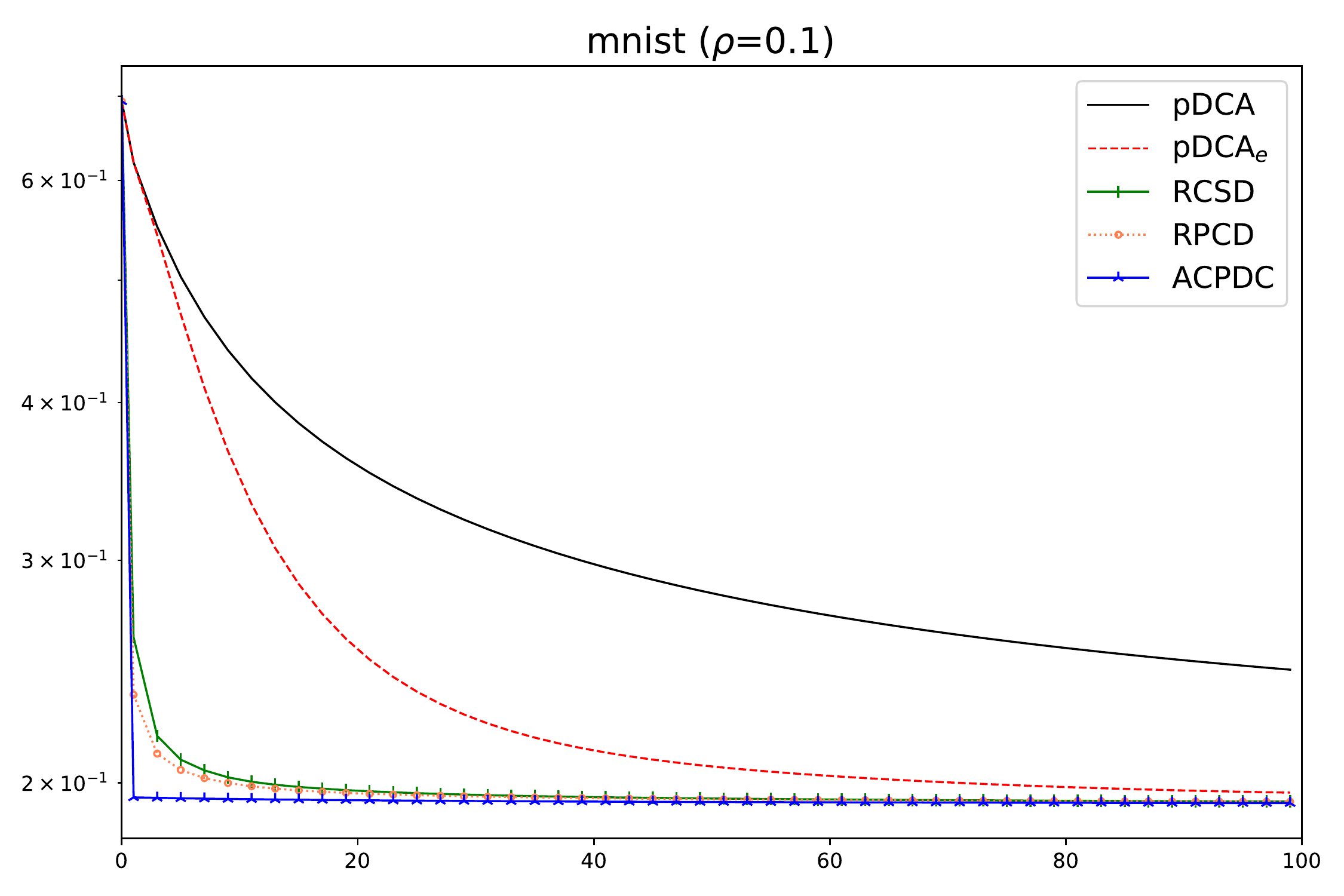}
\includegraphics[scale=0.24]{./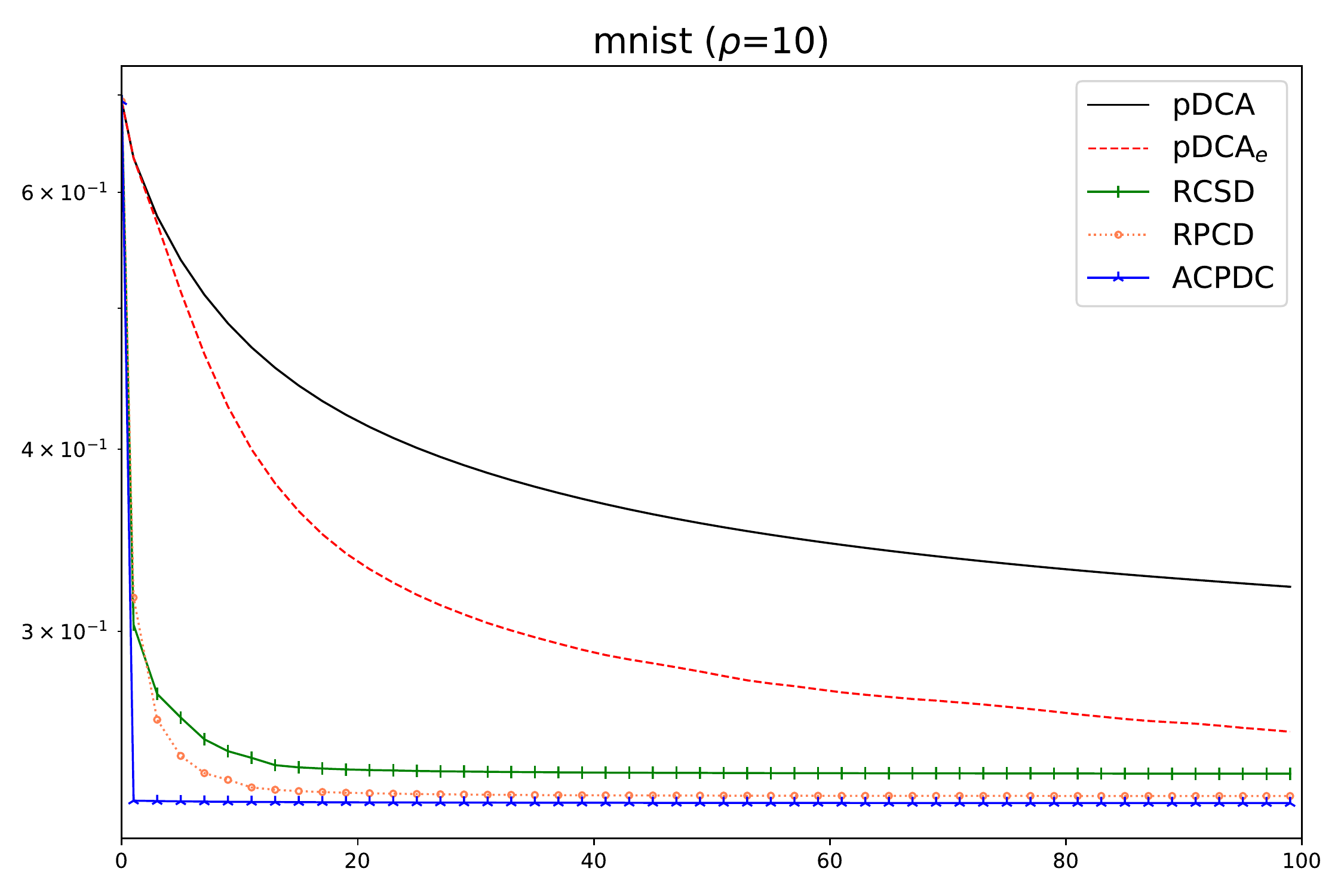}\includegraphics[scale=0.24]{./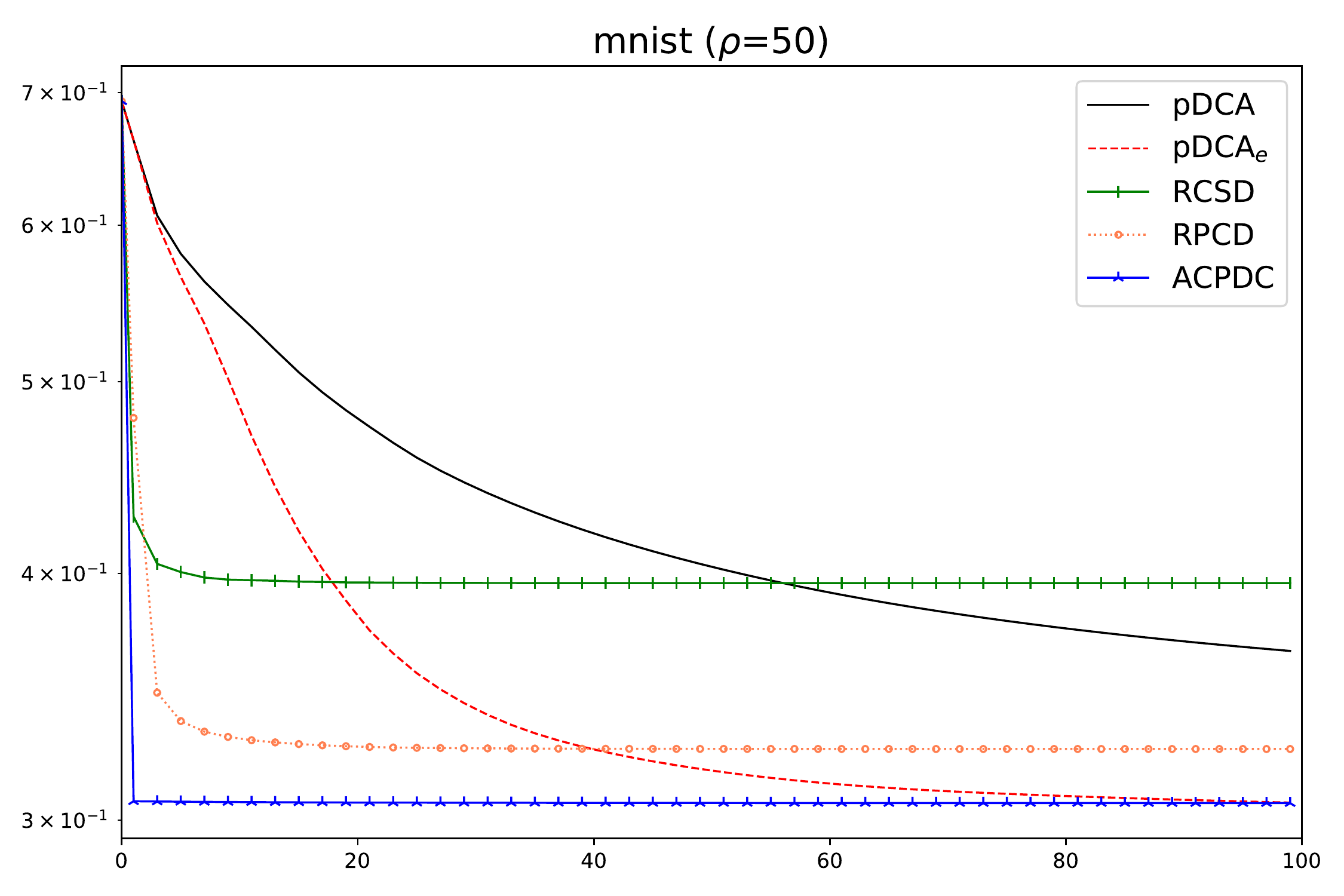}
\par\end{centering}
\begin{centering}
\includegraphics[scale=0.24]{./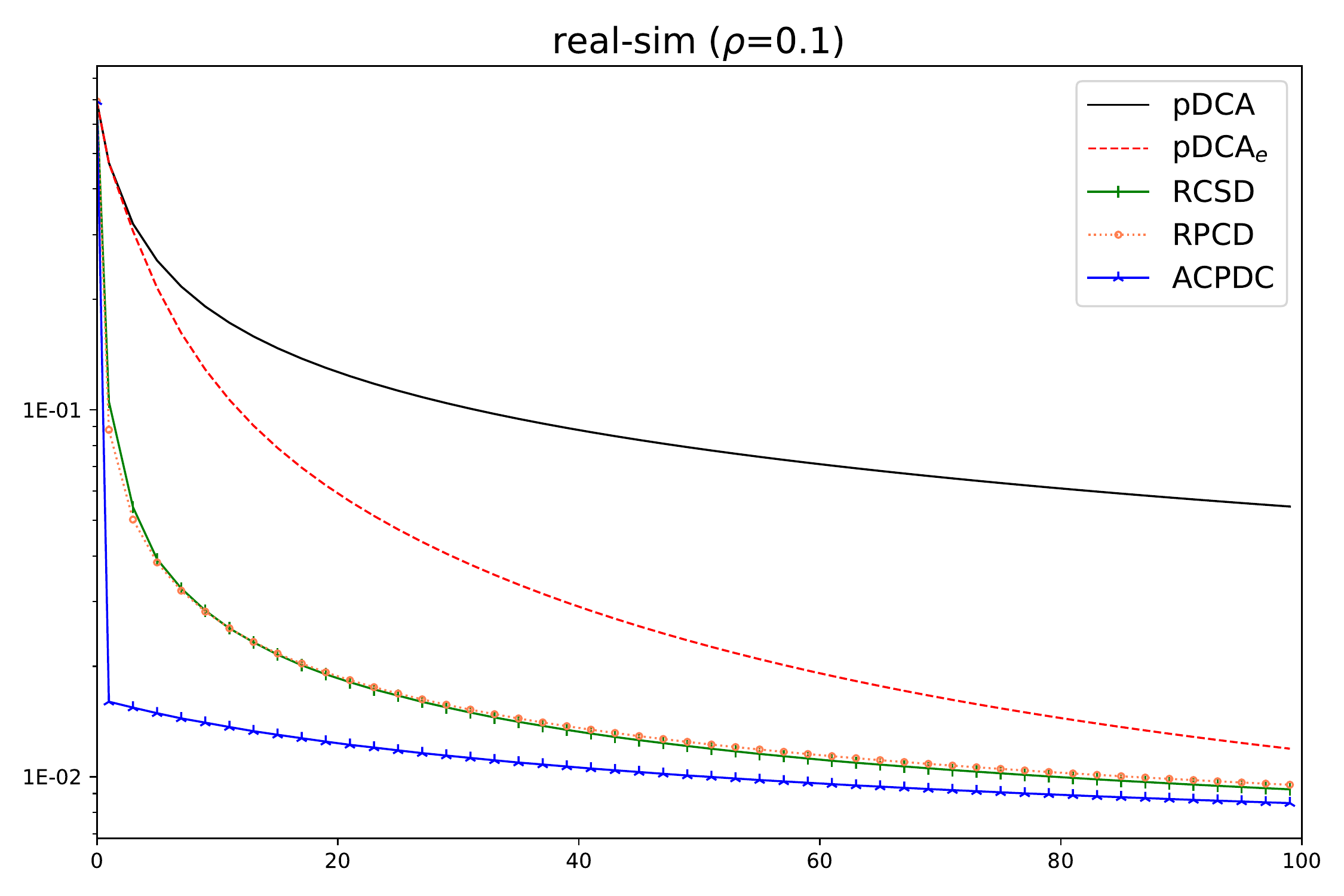}\includegraphics[scale=0.24]{./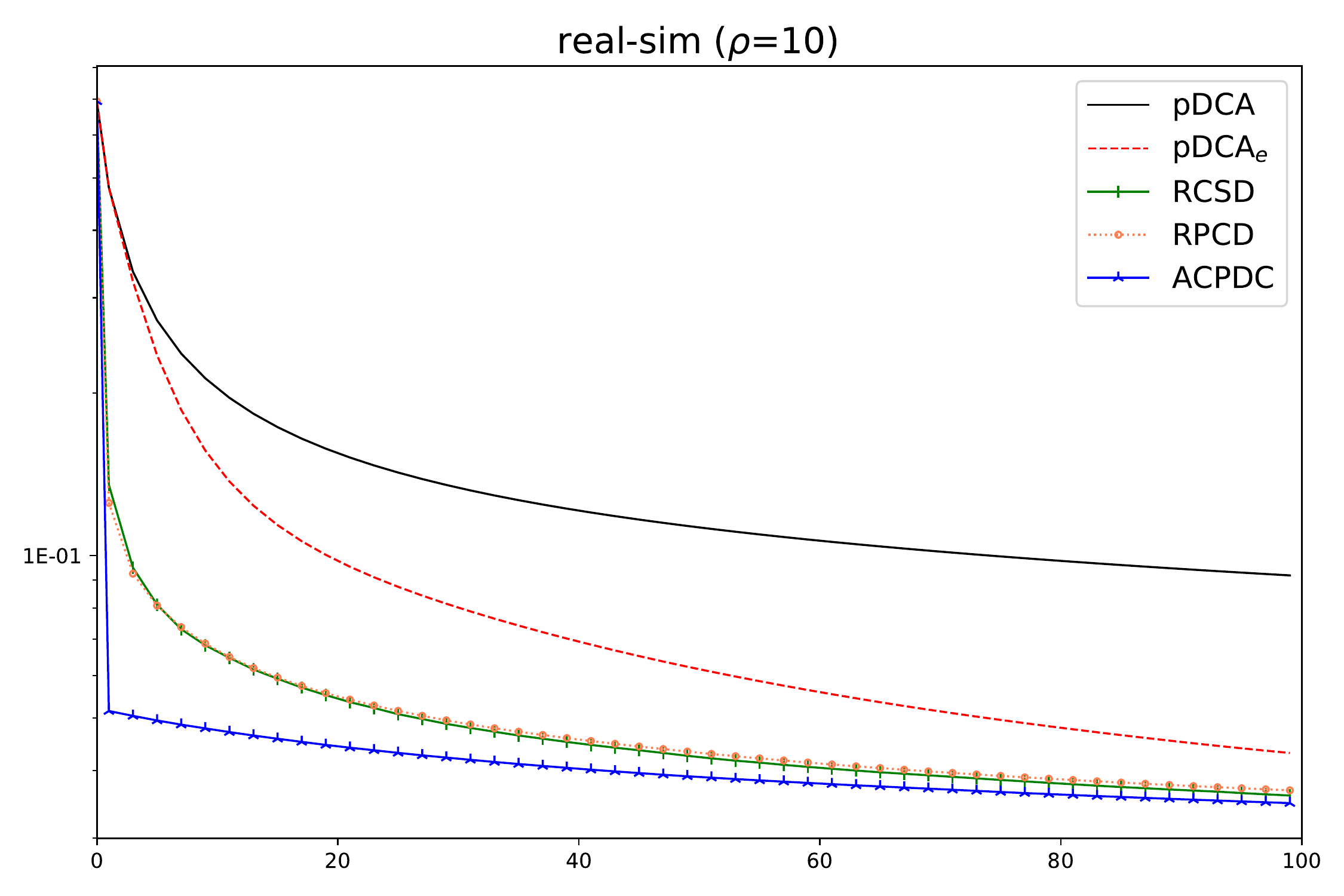}\includegraphics[scale=0.24]{./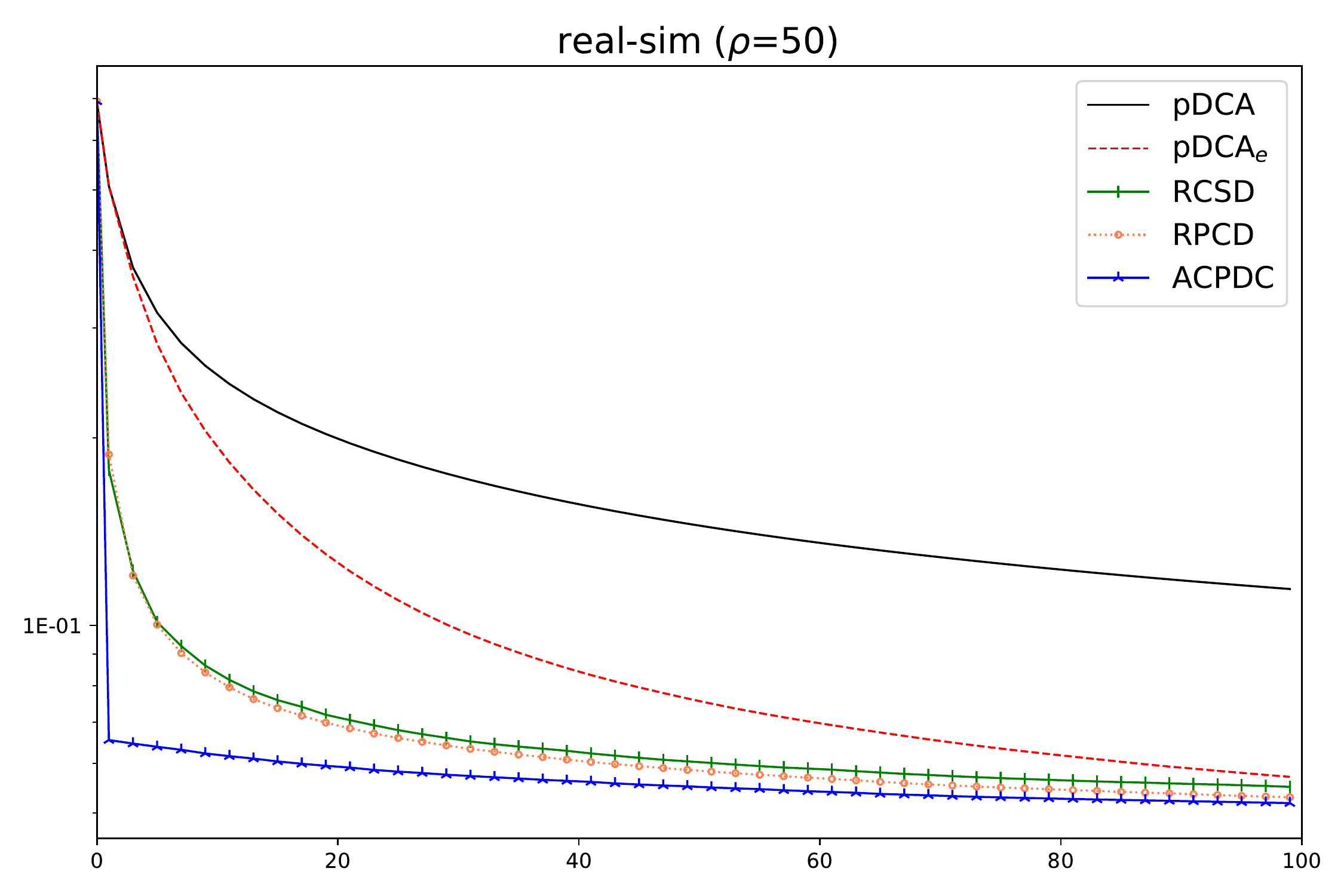}
\par\end{centering}
\begin{centering}
\includegraphics[scale=0.24]{./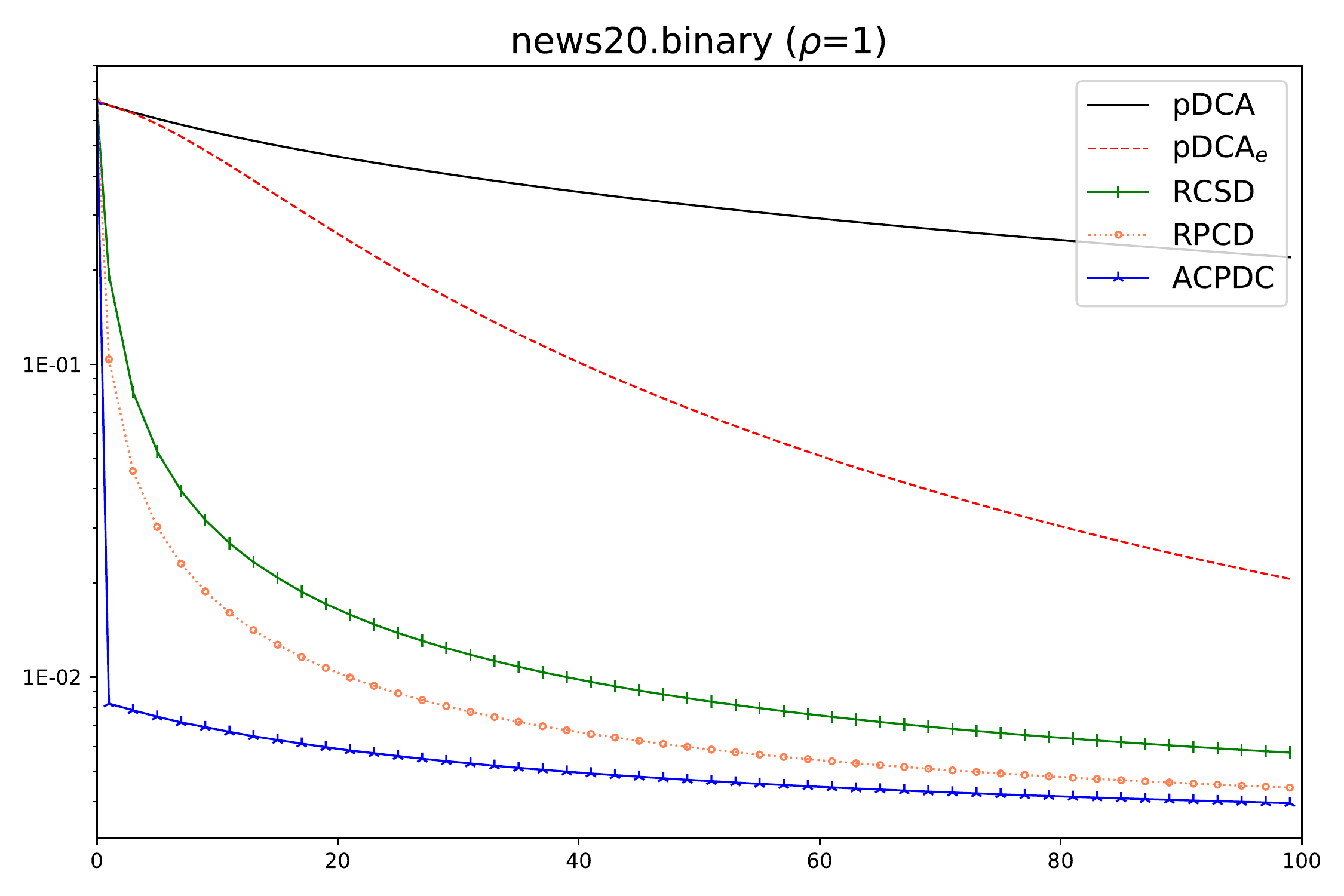}\includegraphics[scale=0.24]{./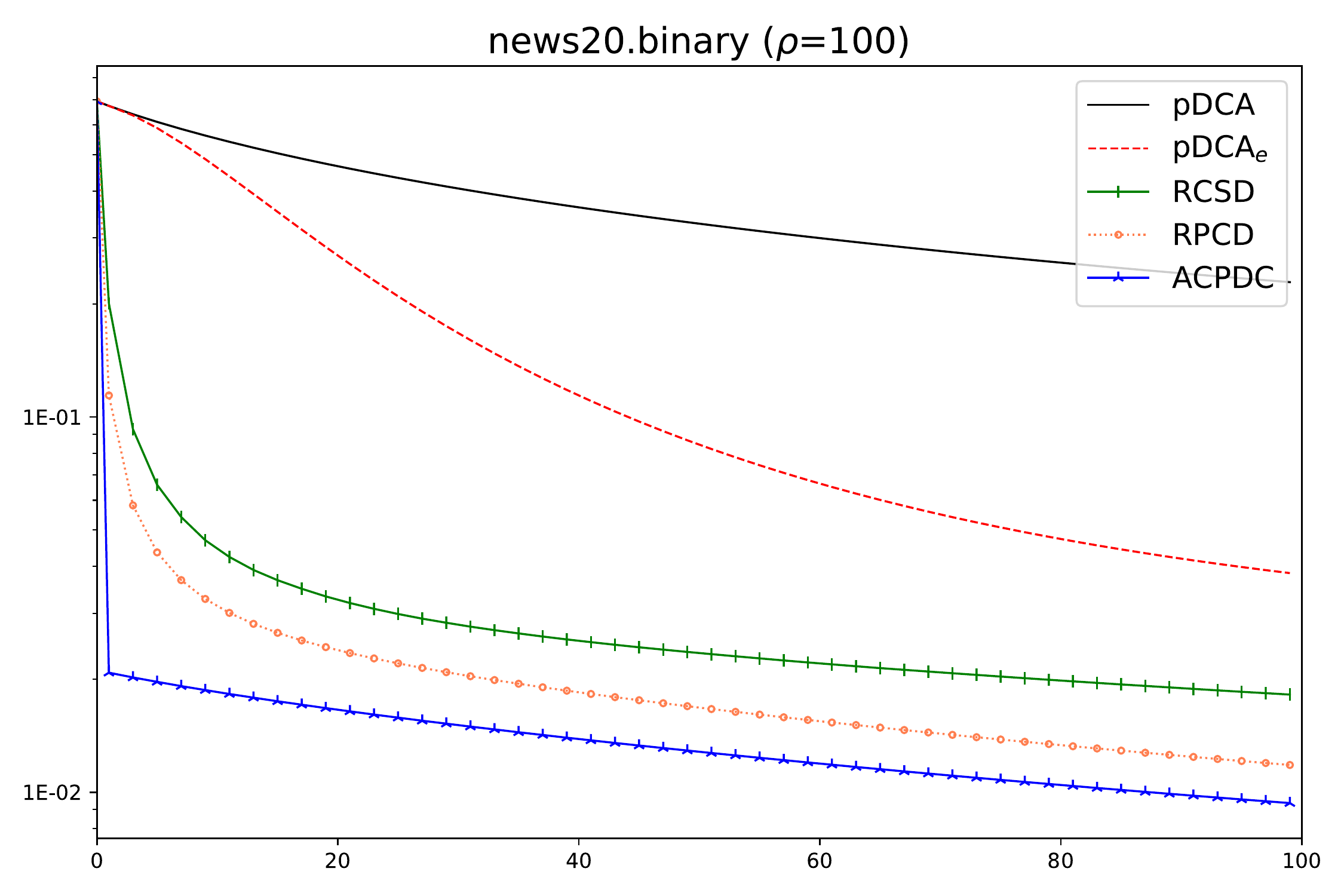}\includegraphics[scale=0.24]{./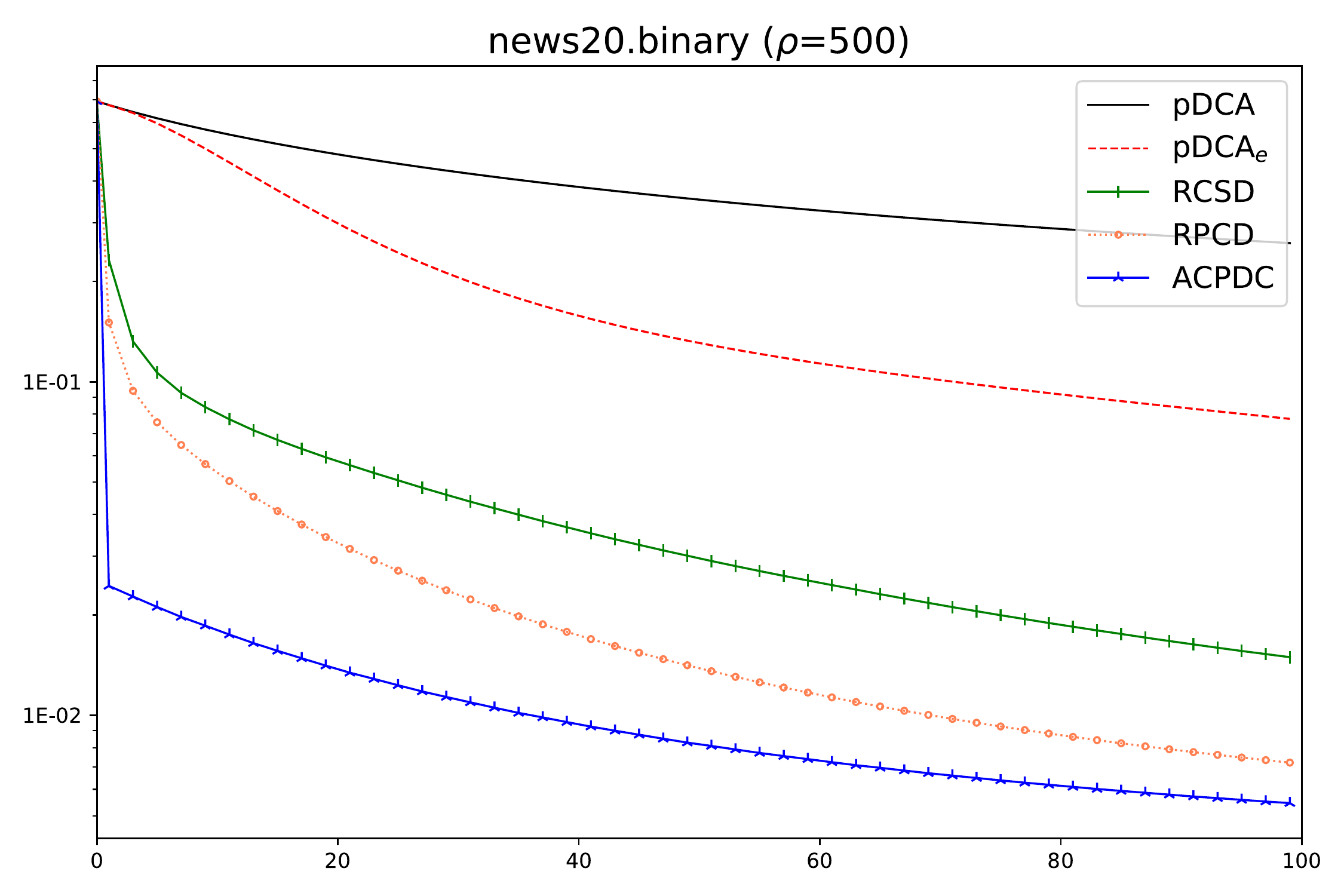}
\par\end{centering}
\begin{centering}
\includegraphics[scale=0.24]{./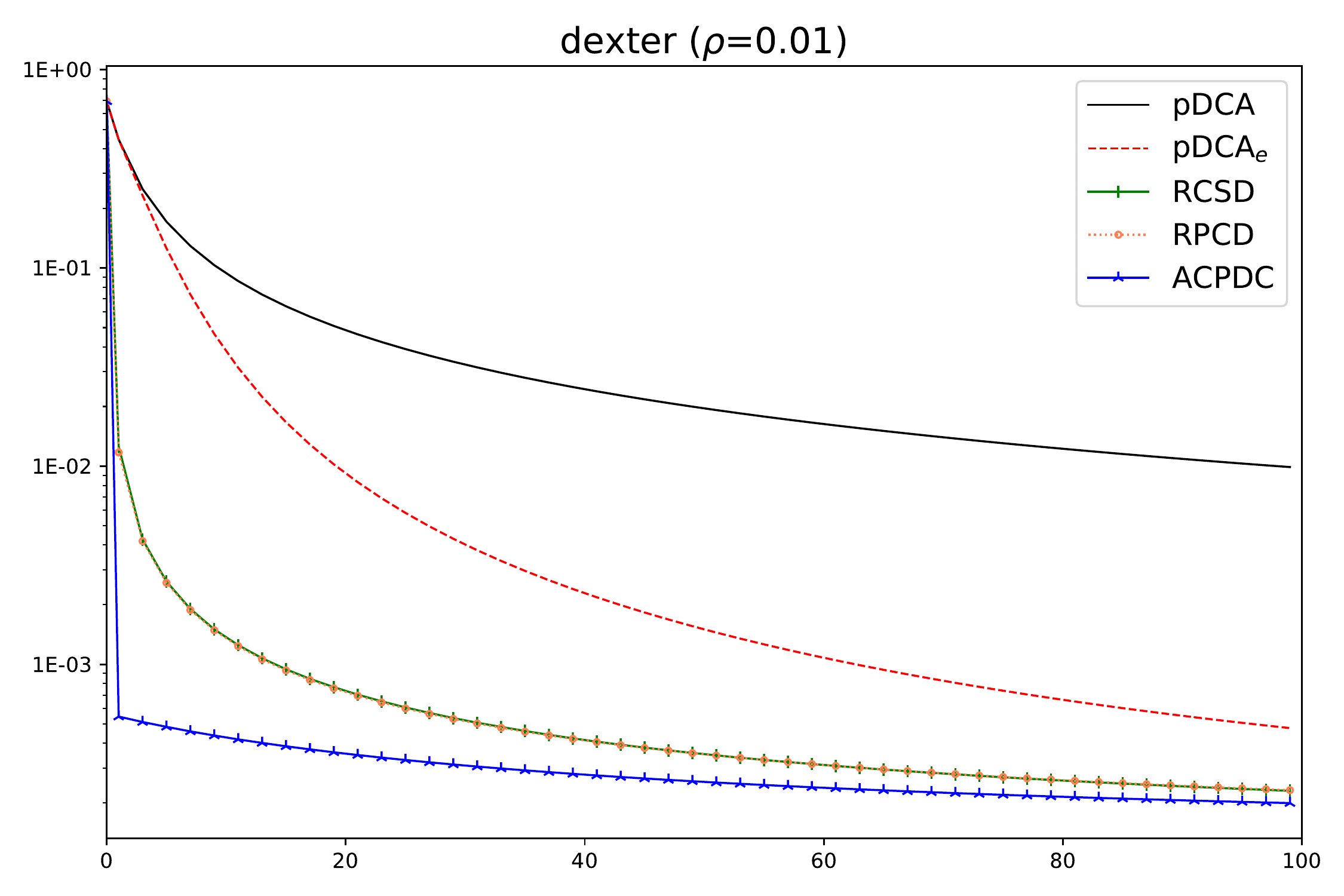}\includegraphics[scale=0.24]{./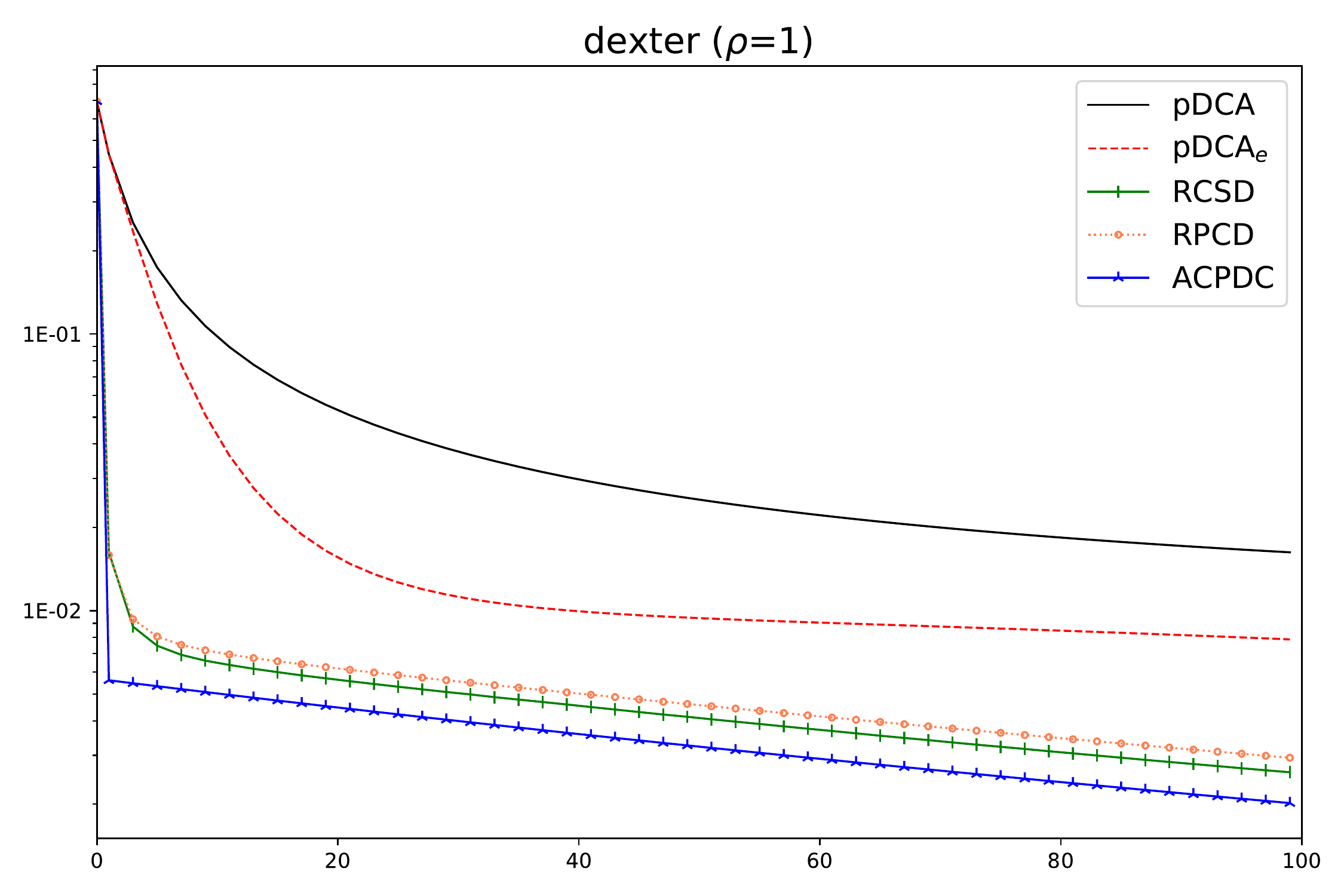}\includegraphics[scale=0.24]{./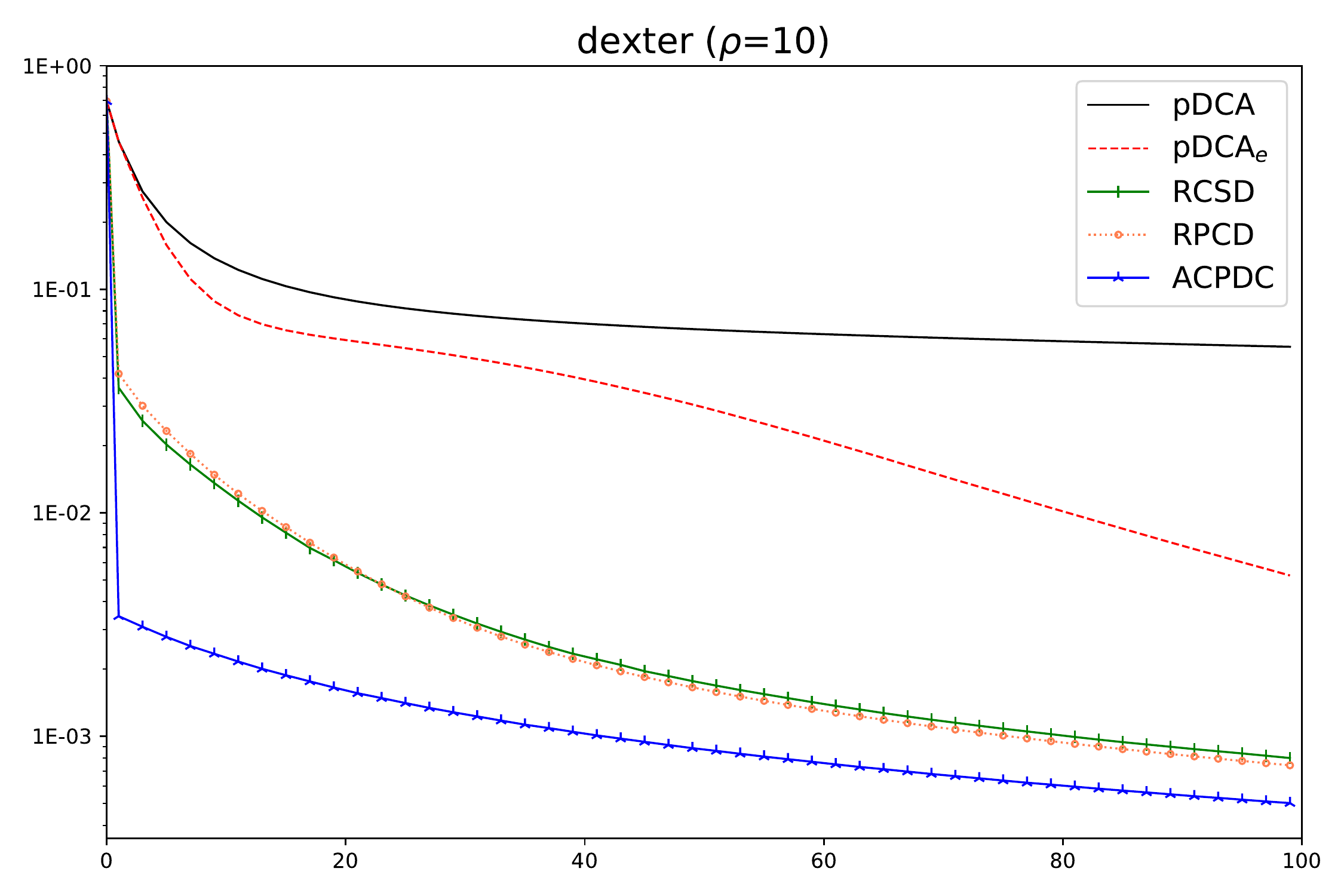}
\par\end{centering}
\centering{}\caption{\label{fig:Exper-logistic}Experimental results on logistic loss classification
with largest-$k$ norm penalty. $y$-axis: objective value (log scaled).
$x$-axis: number of passes to the dataset. Test datasets (from top
to bottom): \texttt{mnist},\texttt{ resl-sim},\texttt{ news20.binary}
and \texttt{dexter.}}
\end{figure}

\paragraph{Smoothed $l_{1}$ loss + SCAD penalty}

Our next experiment targets the $l_{1}$ loss regression with SCAD
penalty:
\begin{equation}
\min_{x\in\Rbb^{d}}F(x)=\frac{1}{n}\tsum_{i=1}^{n}\bigl|b_{i}-a_{i}^{T}x\bigr|+\frac{\rho}{d}\psi_{\lambda,\theta}(x).\label{eq:l1-scad-1}
\end{equation}
Due to the nonsmooth, convex and inseparable part, Problem (\ref{eq:l1-scad-1})
does not exactly satisfy our assumption. Fortunately, by introducing
a small term $\delta$, we can approximate $l_{1}$ loss by Huber
loss $H_{\delta}(a)$:
\[
H_{\delta}(a)=\begin{cases}
\frac{a^{2}}{2\delta} & |a|\le\delta\\
|a|-\tfrac{\delta}{2} & \text{o.w.}
\end{cases},\quad\delta>0.
\]
Our problem of interest, thus, follows:
\begin{equation}
\min_{x\in\Rbb^{d}}F_{\delta}(x)=\frac{1}{n}\sum_{i=1}^{n}H_{\delta}(b_{i}-a_{i}^{T}x)+\frac{\rho}{d}\psi_{\lambda,\theta}(x),\label{eq:l1-scad}
\end{equation}
Although smooth approximation introduces an $O(\delta)$ error, it
allows us to minimize $F_{\delta}(x)$ fast by using the gradient
information. It is easy to verify that $f_{\delta}(x)=\frac{1}{n}\sum_{i=1}^{n}H_{\delta}(b_{i}-a_{i}^{T}x)$
is block-wise Lipschitz smooth with $\frac{\|Ae_{i}\|^{2}}{n\delta}.$
In view of the Lipschitz smoothness of $f_{\delta}(x)$ and the weak
convexity of $\psi_{\lambda,\theta}(x)$, $F_{\delta}(x)$ has a condition
number of $O(\frac{\rho\,(\theta-1)}{d\,\delta})$. To set the parameters,
we choose $\delta$ in the range $\{10^{-2},10^{-3}\}$. Clearly,
$F_{\delta}$ is more difficult to minimize when $\delta$ is relatively
small.

\begin{figure}[h]
\begin{centering}
\includegraphics[scale=0.19]{./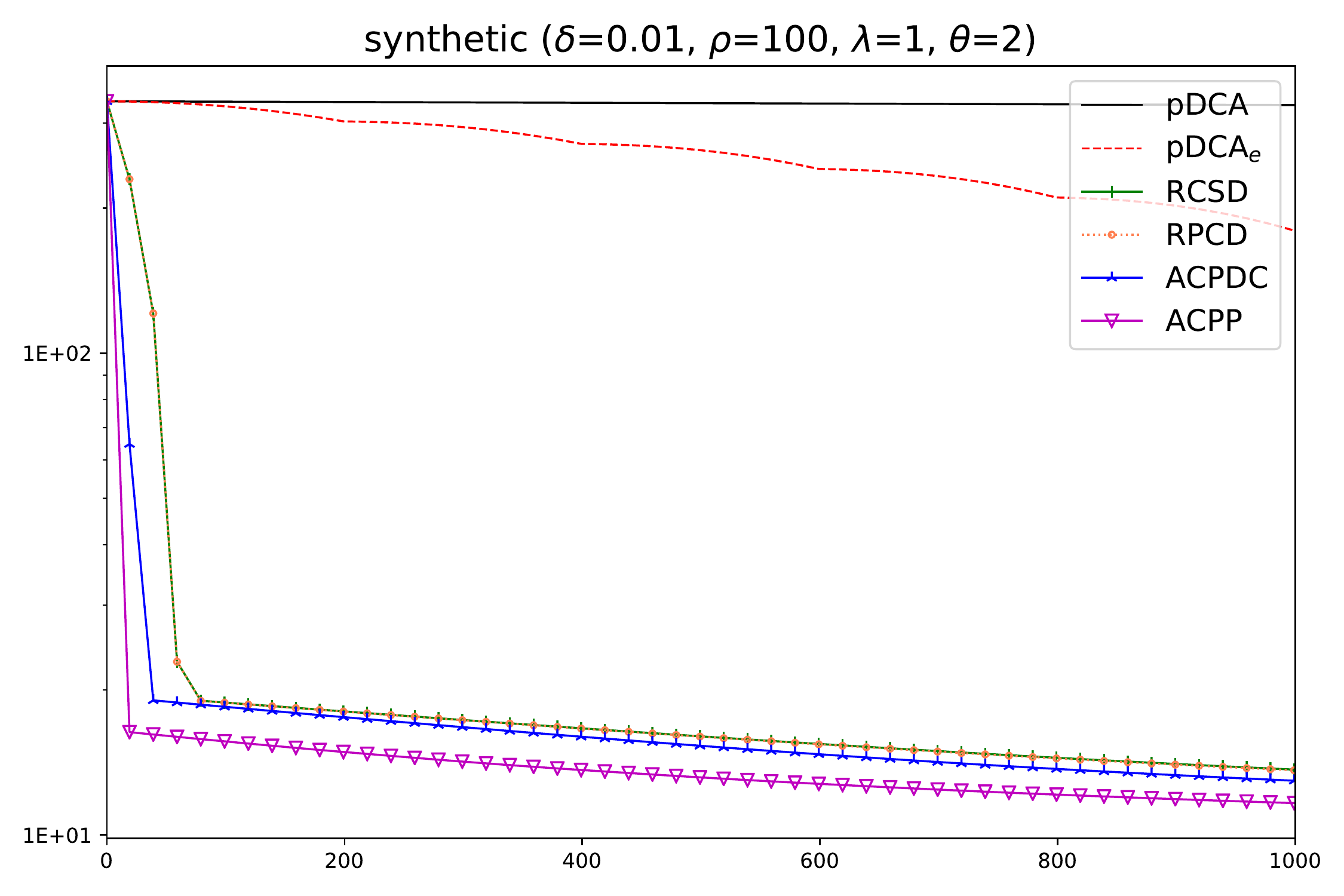}\includegraphics[scale=0.19]{./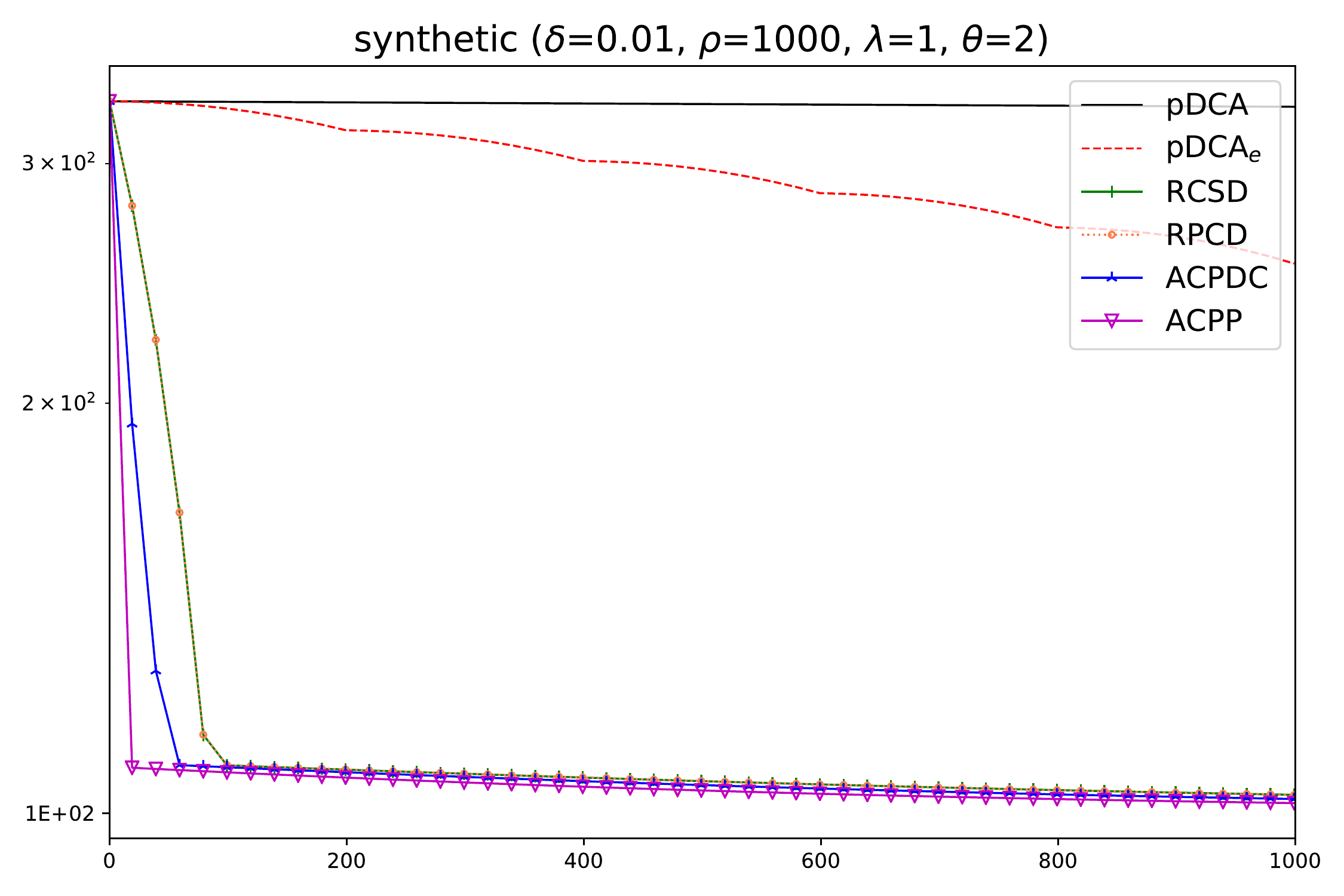}\includegraphics[scale=0.19]{./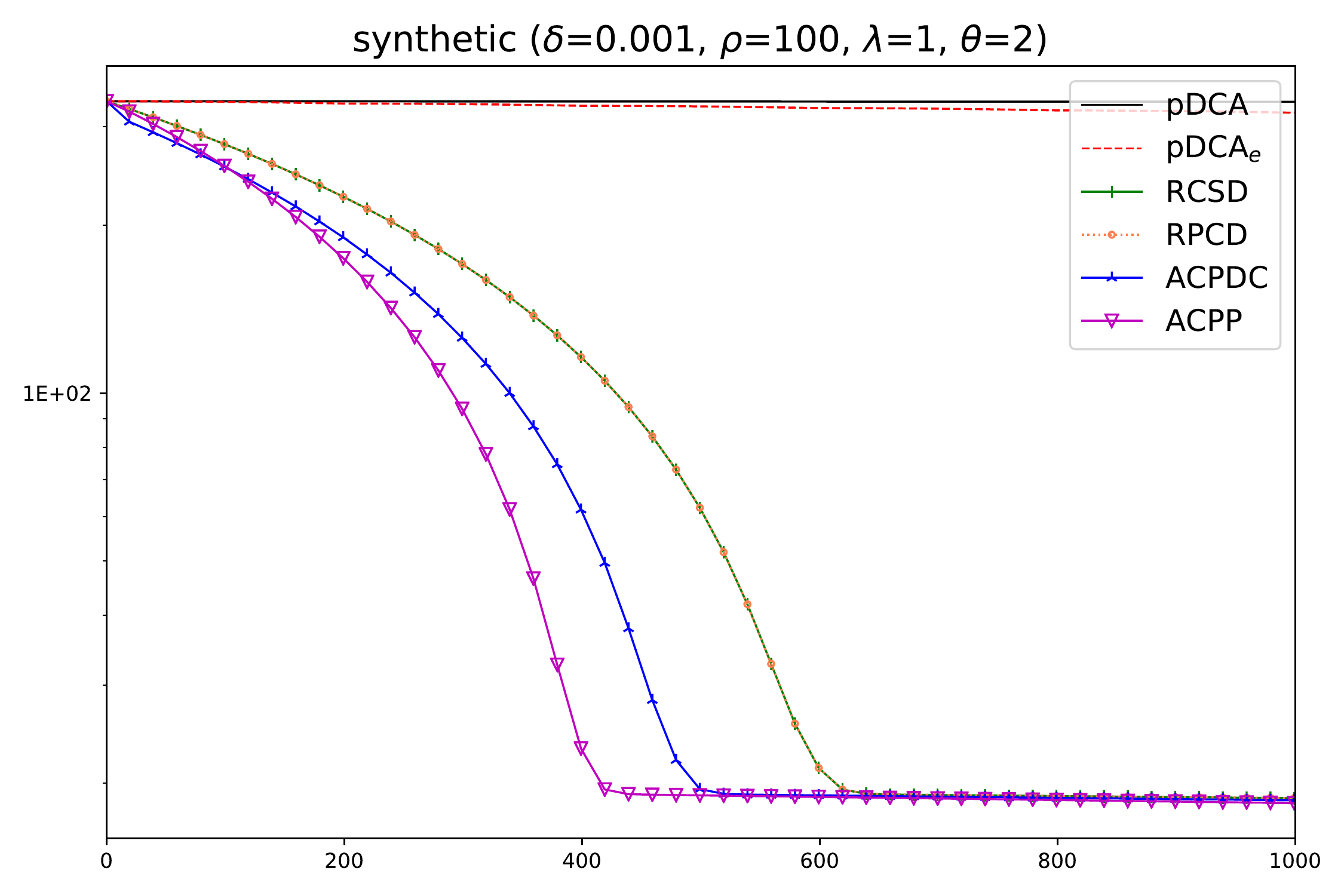}\includegraphics[scale=0.19]{./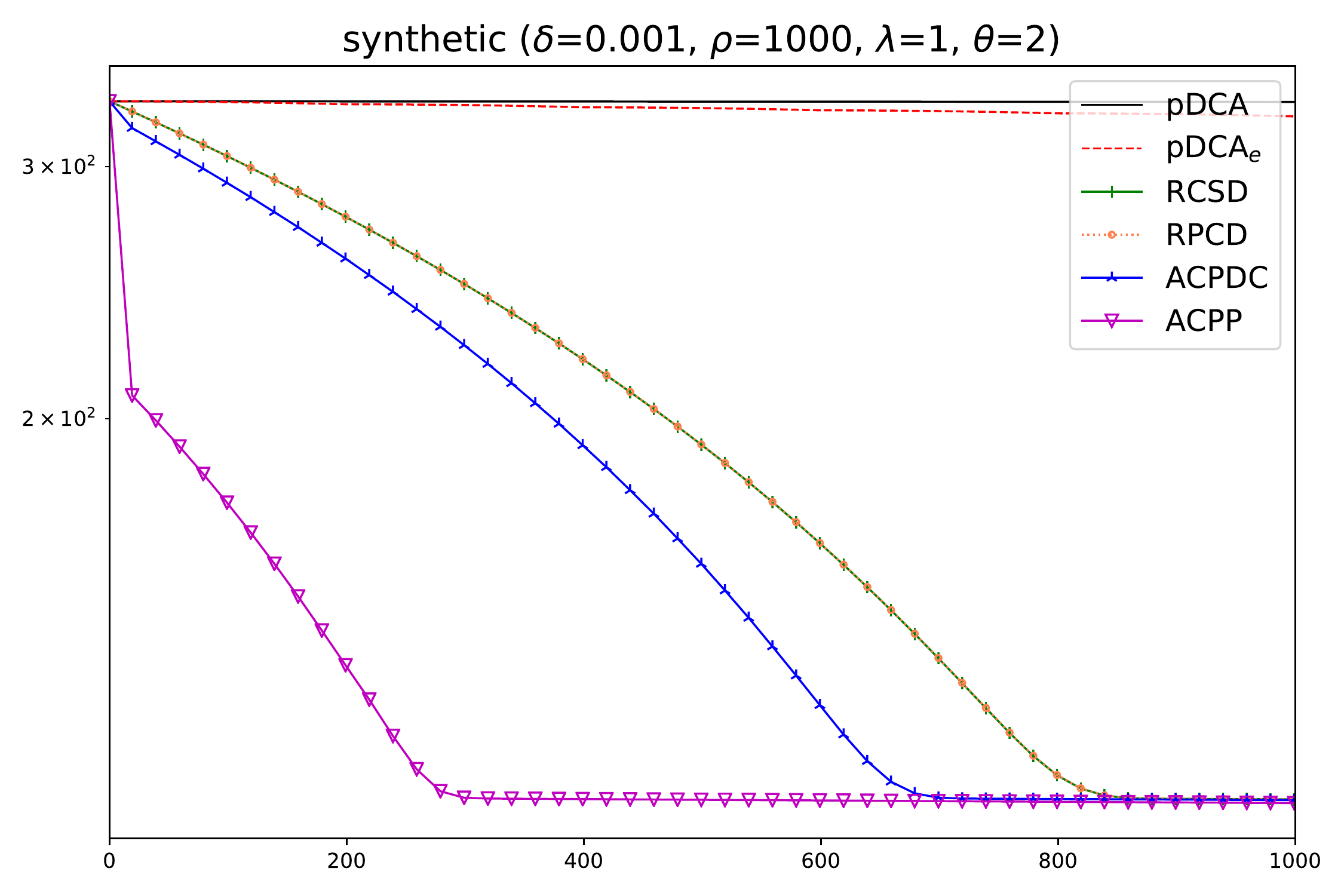}
\par\end{centering}
\begin{centering}
\includegraphics[scale=0.19]{./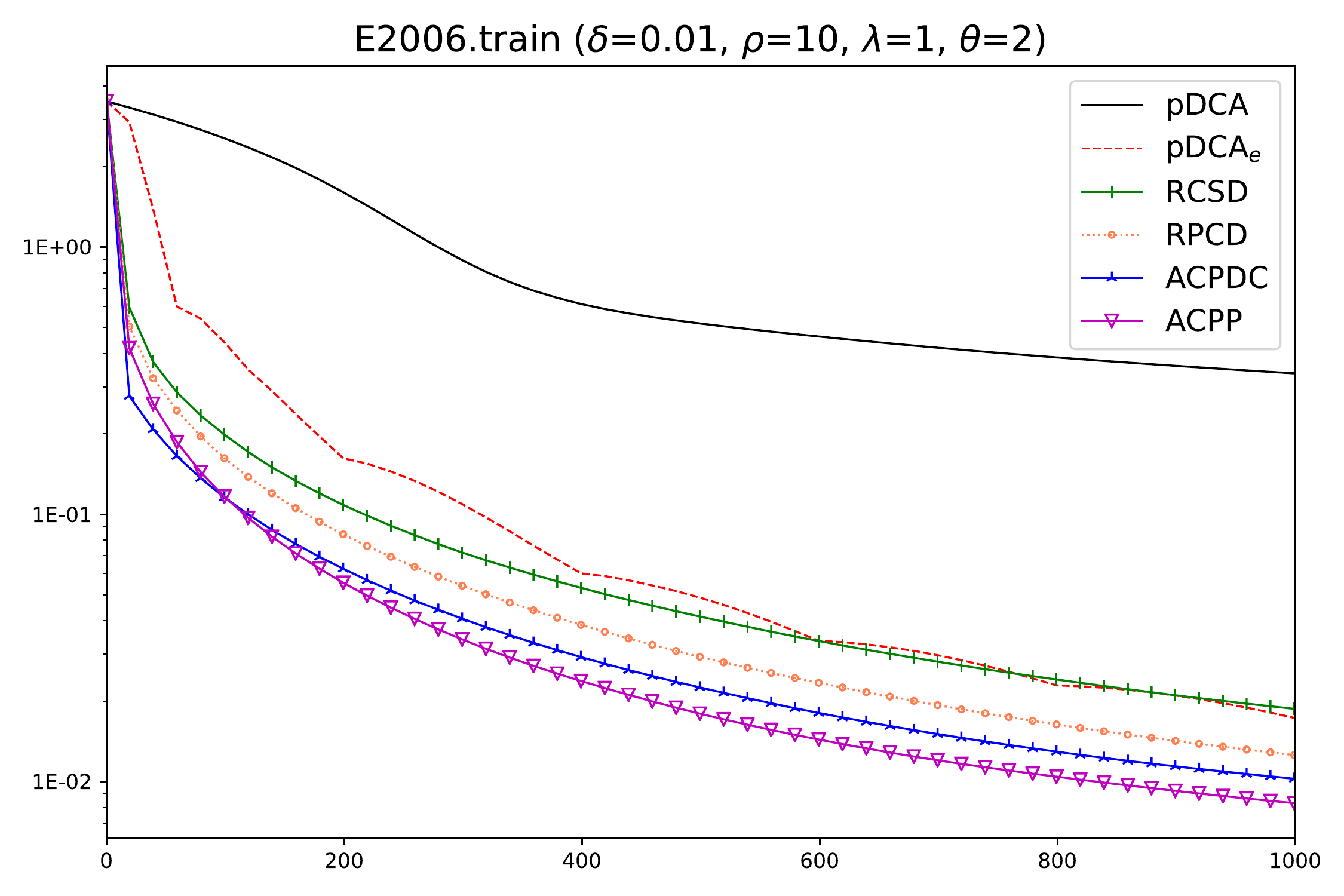}\includegraphics[scale=0.19]{./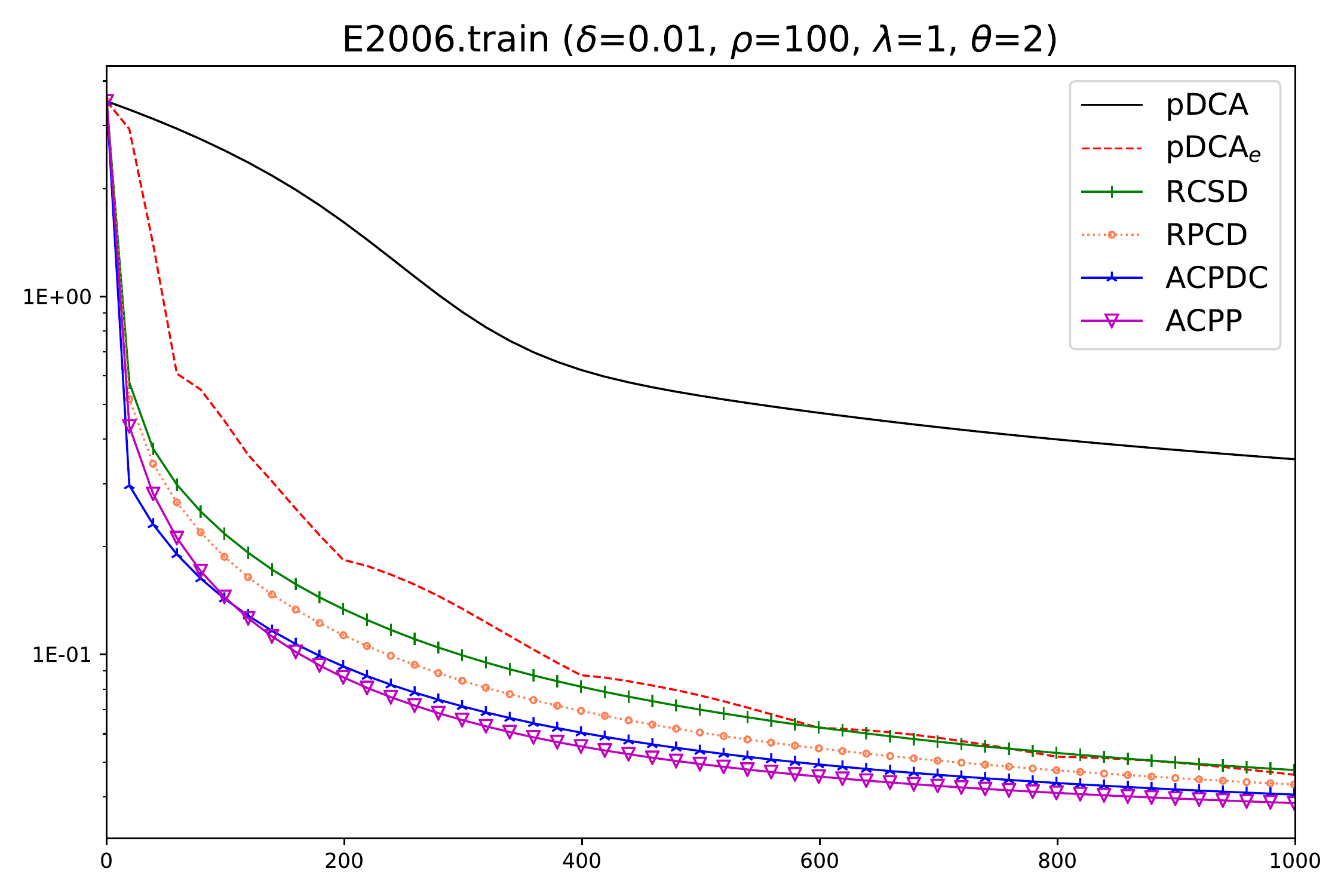}\includegraphics[scale=0.19]{./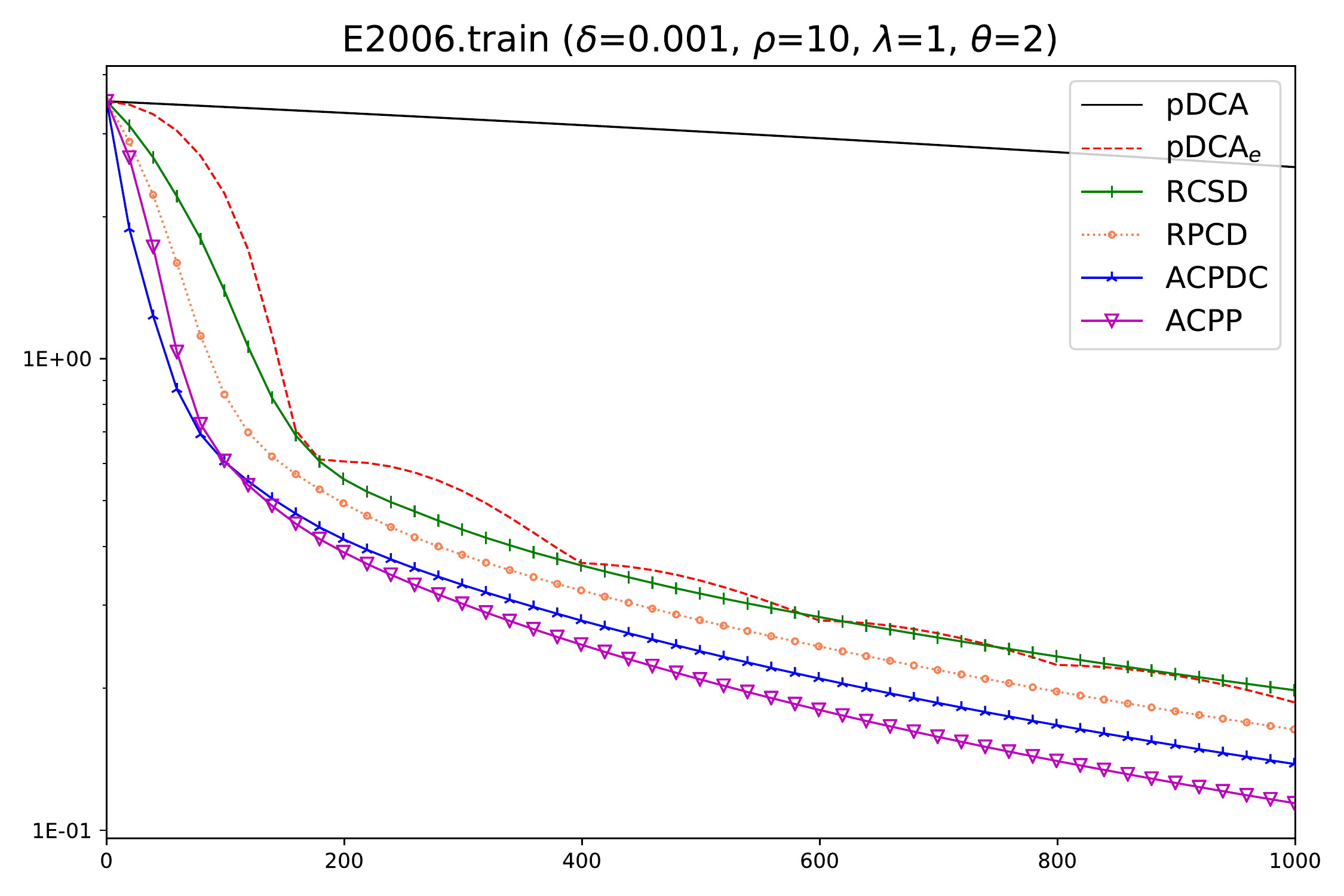}\includegraphics[scale=0.19]{./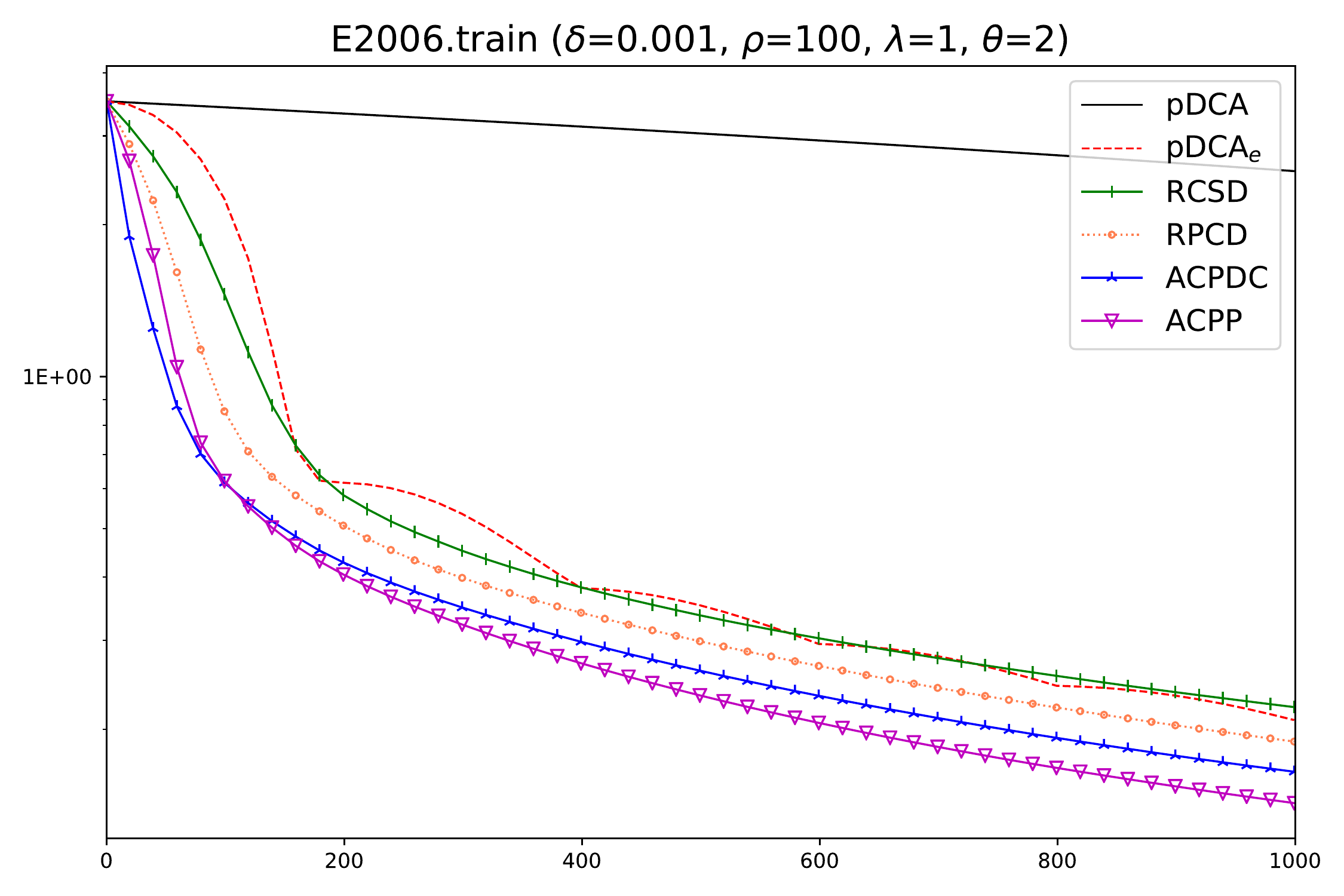}
\par\end{centering}
\begin{centering}
\includegraphics[scale=0.19]{./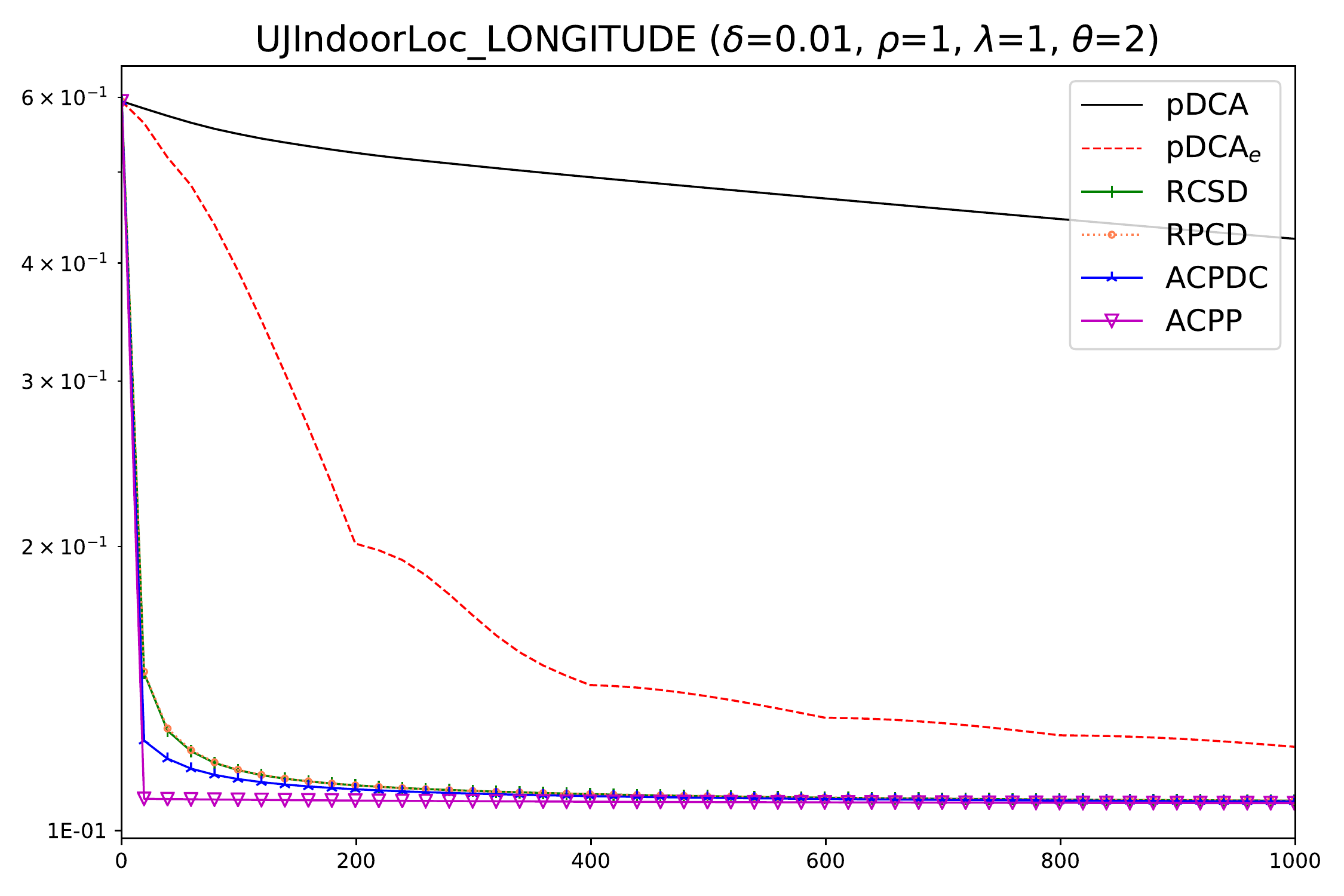}\includegraphics[scale=0.19]{./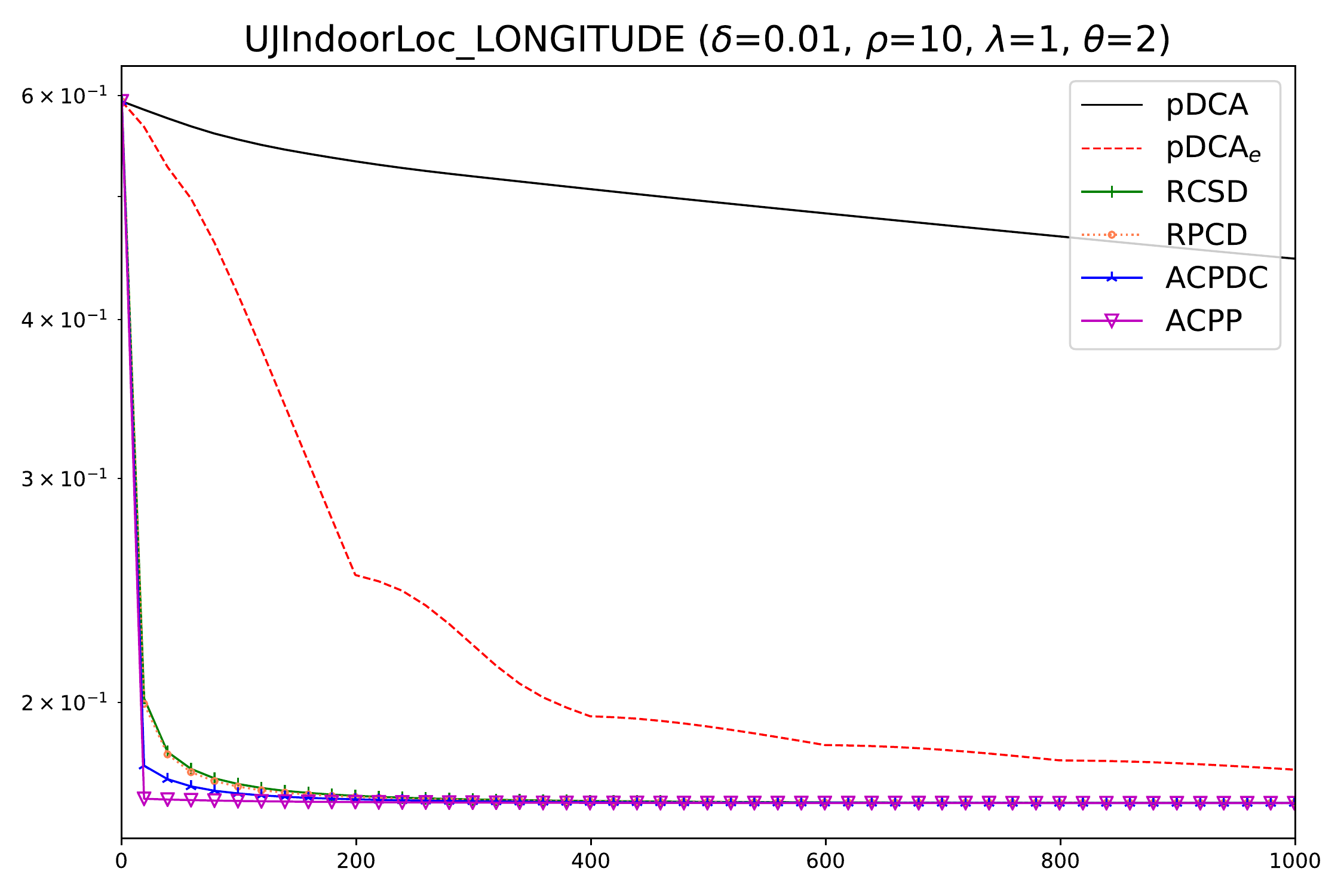}\includegraphics[scale=0.19]{./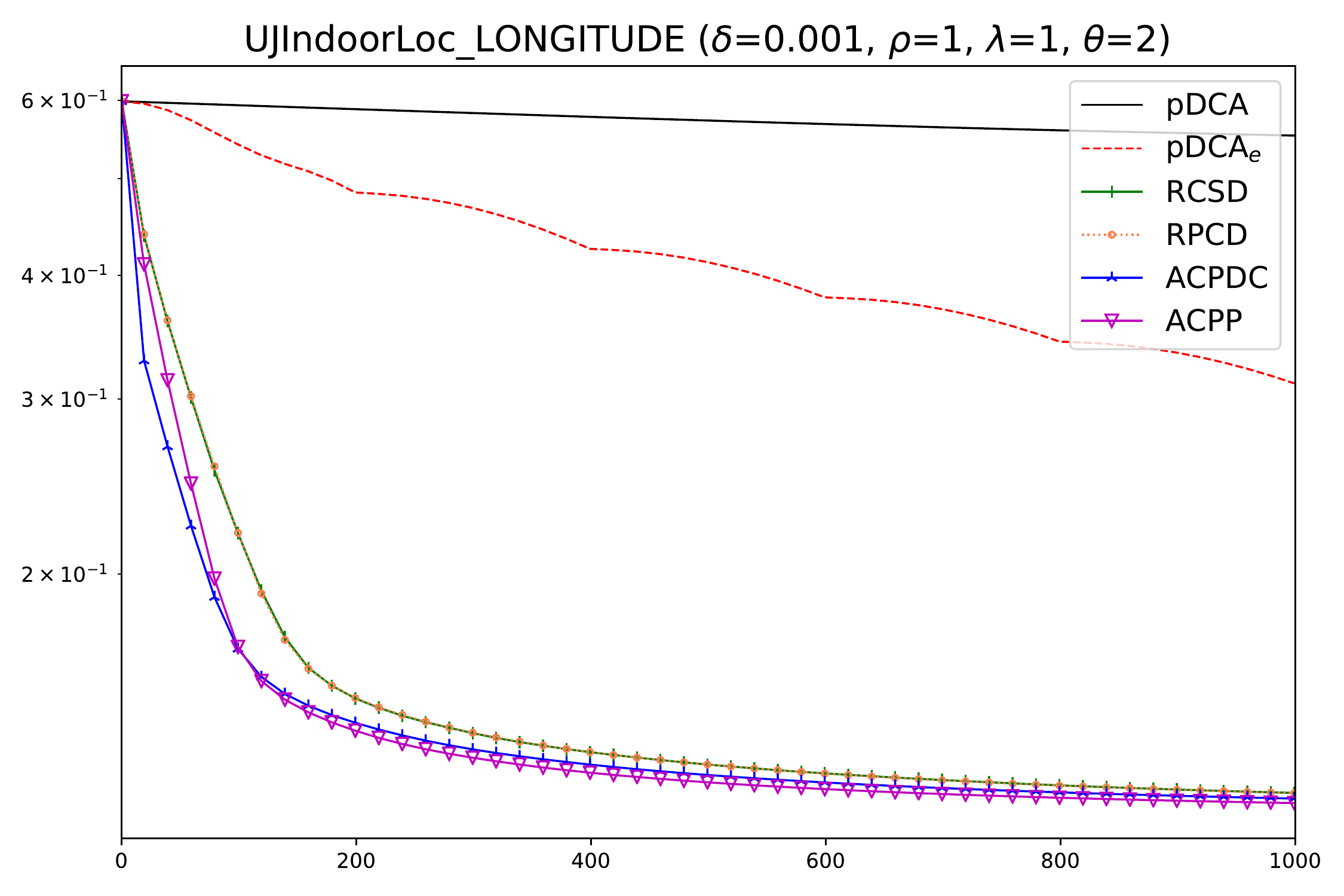}\includegraphics[scale=0.19]{./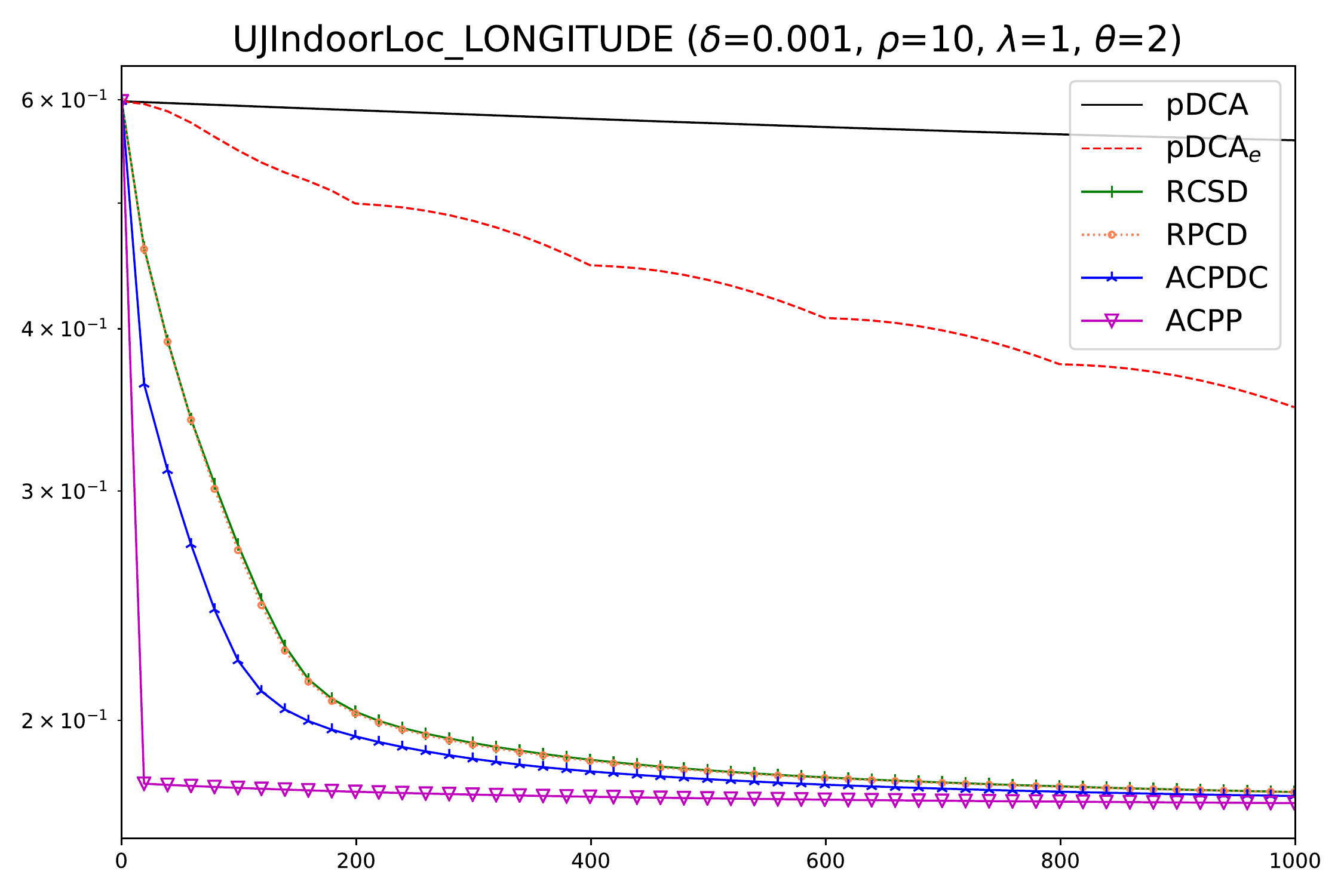}
\par\end{centering}
\begin{centering}
\includegraphics[scale=0.19]{./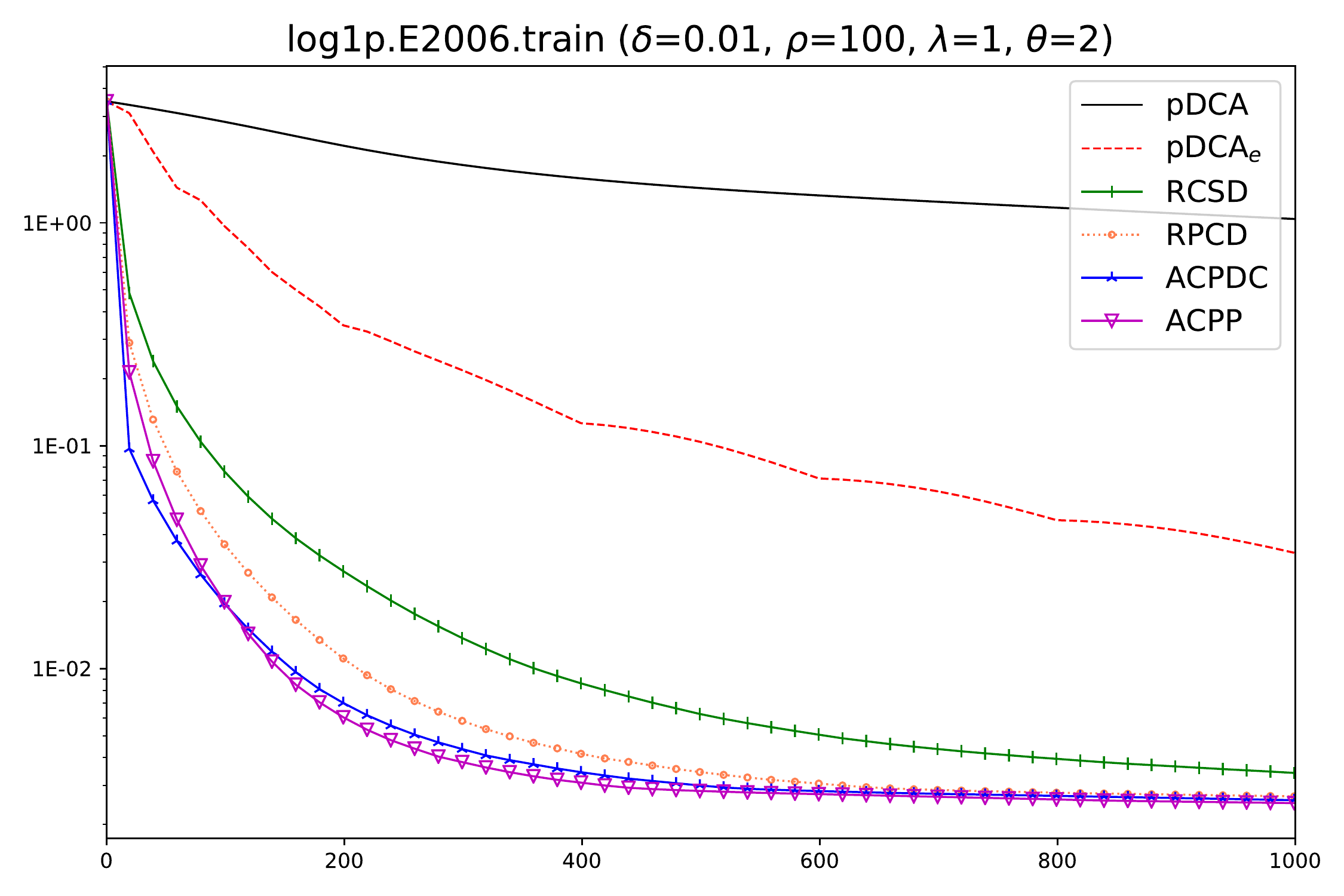}\includegraphics[scale=0.19]{./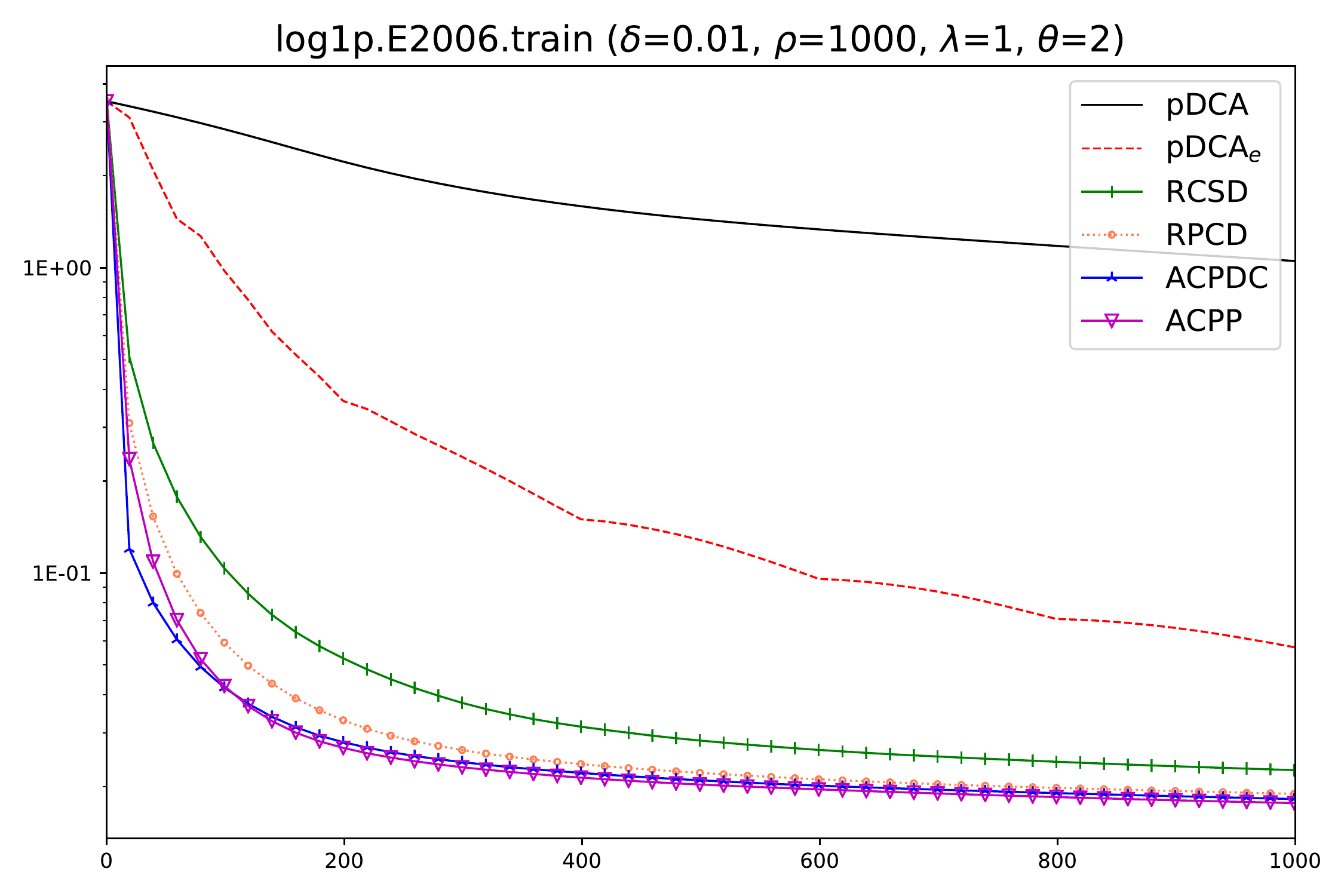}\includegraphics[scale=0.19]{./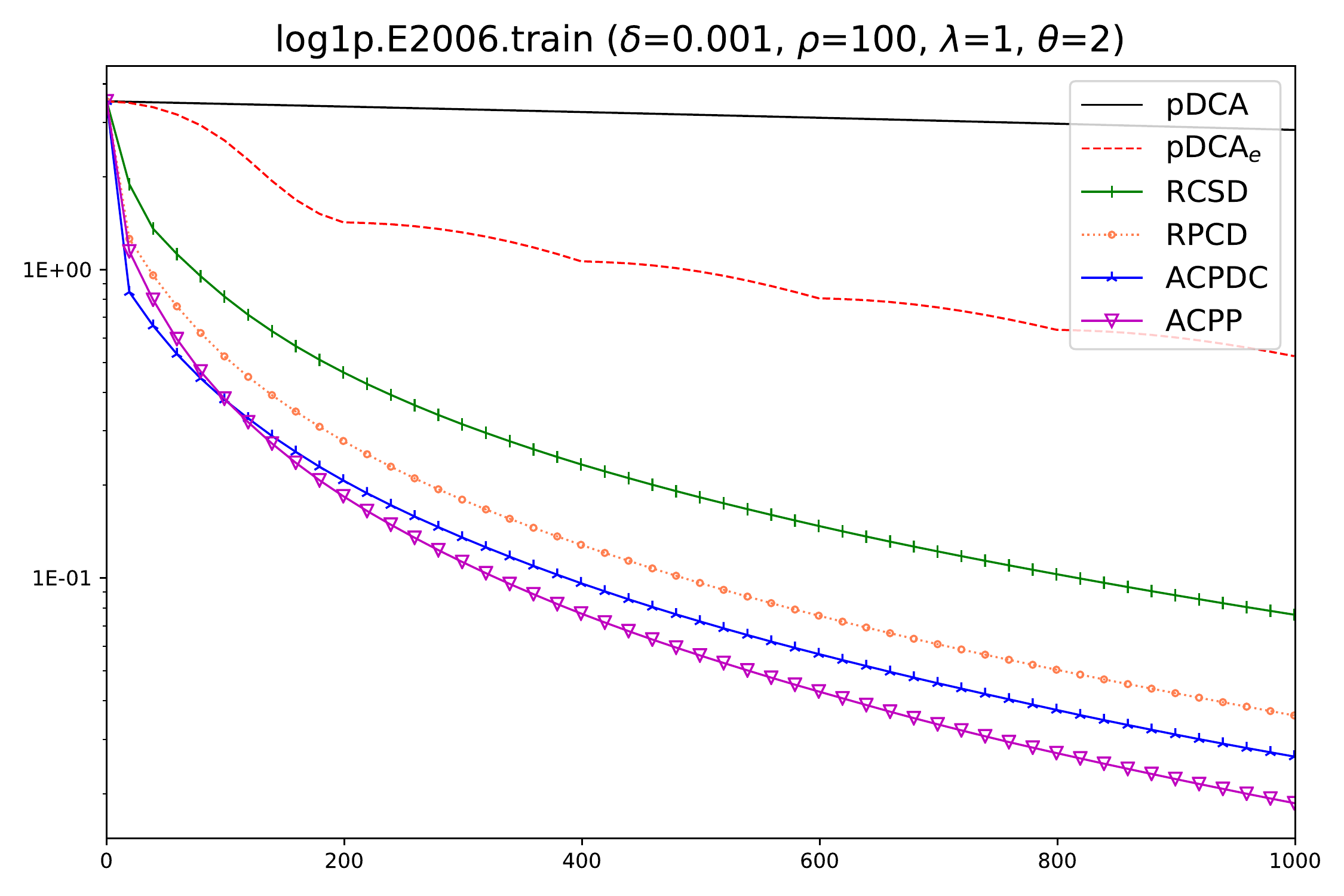}\includegraphics[scale=0.19]{./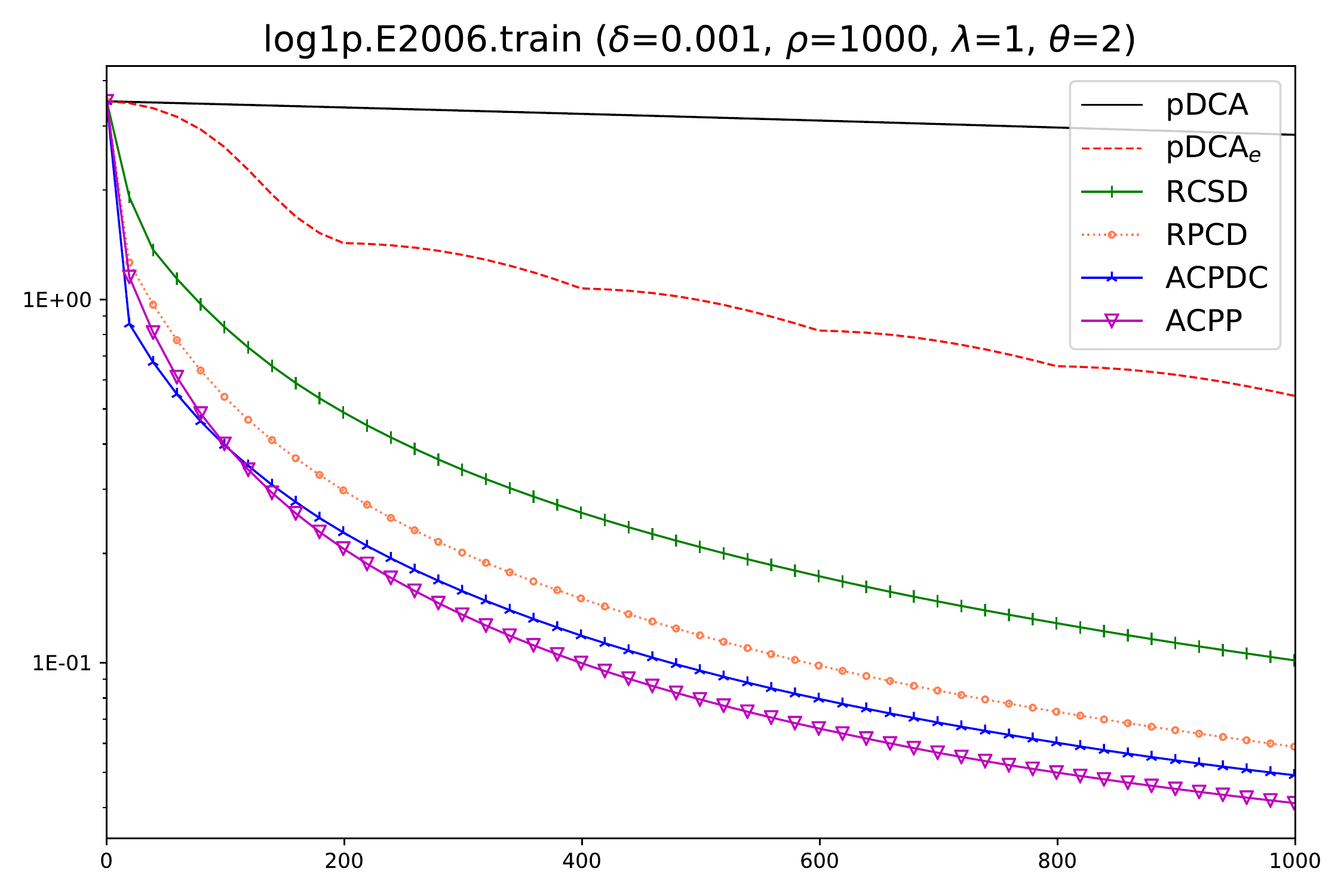}
\par
\end{centering}
\centering{}\caption{\label{fig:smooth-l1} Experimental results on smoothed $l_{1}$ regression
with SCAD penalty. $y$-axis: objective value (log scaled). $x$-axis:
number of passes to the dataset. Test datasets (from top to bottom): \texttt{synthetic},\texttt{
E2006-tfidf} and \texttt{UJIndoorLoc} and \texttt{E2006-log1p.}}
\end{figure}

We conduct the experiments on datasets \texttt{synthetic},
\texttt{E2006}, \texttt{UJIndoorLoc} and \texttt{log1p.E2006}.
In \texttt{UJIndoorLoc}, we consider  predicting the location
(longitude) of users inside of buildings. The dataset is preprocessed by
rescaling and shifting the longitude values to $[-1,1]$. We compare
all our proposed CD methods and the gradient methods pDCA and $\text{pDCA}_{e}$;
convergence performance is shown in Figure \ref{fig:smooth-l1}. When
the value of $\delta$ decreases, the approximation function $F_{\delta}$
is increasingly ill-conditioned, thereby being increasingly difficult
to optimize. Indeed, we observe that the convergence of all the tested
algorithms slows down when $\delta$ decreases from $10^{-2}$ to
$10^{-3}$. Meanwhile, we observe that, CD methods still perform consistently
better than gradient-based methods, and $\text{pDCA}_{e}$ performs
consistently better than pDCA. Moreover, we find that both ACPDC and
ACPP exhibit fast convergence and they both outperform RCSD and RPCD,
further confirming the advantage of using ACD in the nonconvex settings.

\section{Conclusion\label{sec:Conclusion}}

In this paper, we developed novel CD methods for minimizing a class
of nonsmooth and nonconvex functions. We developed randomized coordinate
subgradient descent (RCSD) and randomly permuted coordinate descent
(RPCD) methods, which naturally extend randomized coordinate descent
and cyclic coordinate descent, respectively, to the nonsmooth and
nonconvex settings, and establish their asymptotic convergence to
critical points and novel complexity results. We also developed a
new randomized proximal DC algorithm (ACPDC) for composite DC problems
and a new randomized proximal point algorithm (ACPP) for weakly convex
problems, based on the fast convergence of ACD for convex programming.
We developed new optimality measures and established iteration complexities
for the proposed algorithms. Both theoretical and experimental results
demonstrate the advantage of our proposed CD approaches over state-of-the-art
gradient-based methods.

\appendix
\appendixpage 

\section*{Convergence of ACD for convex smooth optimization\label{sec:nonun-acd}}

In this section, we propose Algorithm \ref{alg:ACD-nuni}, a variant
of ACD method with non-uniform sampling, for unconstrained smooth
optimization.
\begin{theorem}
In Algorithm \ref{alg:ACD-nuni}, choose the probability $p_{i}={L_{i}^{(1-s)/2}}/{T_{(1-s)/2}}$, $i\in[m]$. Define $\Gamma_{k}=\prod_{i=0}^{k}(1-\alpha_{i})^{-1}$, and assume that $\beta_{k}$, $\alpha_{k}$, $\gamma$ satisfy:
\begin{align}
\mu_{s} & \ge\gamma\label{eq:recur-gamma}\\
\beta_{k} & \ge\alpha_{k}T_{(1-s)/2}^{2},\quad k=0,1,2,...,\label{eq:recur-beta-alpha}\\
\Gamma_{k}\alpha_{k}(\beta_{k}+\gamma) & \ge\Gamma_{k+1}\alpha_{k+1}\beta_{k+1}\quad k=0,1,2,....
\end{align}
Then we have 
\[
\Gamma_{K-1}\Ebb[f(x^{K})-f(x^{*})]
\le f(x^{0})-f(x^{*})+\tfrac{\Gamma_{0}\alpha_{0}\beta_{0}}{2}\|x^{*}-x^{0}\|_{[s]}^{2}.
\]
In particular, if we choose $\alpha_{k}=\tfrac{\sqrt{\mu_{s}}}{\sqrt{\mu_{s}}+T_{(1-s)/2}}$
and $\beta_{k}=\sqrt{\mu_{s}}T_{(1-s)/2}$ and $\gamma=\mu_{s}$,
then we have 
\[
\Ebb[f(x^{K})-f(x^{*})]\le\left(1-\tfrac{\sqrt{\mu_{s}}}{\sqrt{\mu_{s}}+T_{(1-s)/2}}\right)^{K}\left[f(x^{0})-f(x^{*})+\tfrac{\mu_{s}}{2}\|x-x^{0}\|_{[s]}^{2}\right].
\]
\end{theorem}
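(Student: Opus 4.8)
The plan is to run the standard estimate–sequence (Lyapunov) analysis of accelerated coordinate descent, specialized to the importance sampling $p_i=L_i^{(1-s)/2}/T_{(1-s)/2}$ and the $\norm{\cdot}_{[s]}$-geometry, in the spirit of accelerated coordinate methods with block importance sampling (\cite{allen-zhu2016even,nesterov2017efficiency}). Set $\Gamma_{-1}=1$, so that $\Gamma_{k-1}=(1-\alpha_k)\Gamma_k$ and, once we check $\alpha_k\in(0,1)$, the sequence $\{\Gamma_k\}$ is positive and increasing; introduce the potential
\[
\Lambda_k=\Gamma_{k-1}\bigl(f(x^k)-f(x^*)\bigr)+\tfrac{\Gamma_k\alpha_k\beta_k}{2}\norm{z^k-x^*}_{[s]}^2 .
\]
The core of the argument is the conditional one–step inequality $\Ebb_{i_k}[\Lambda_{k+1}]\le\Lambda_k$. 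Summing it over $k=0,\dots,K-1$ telescopes the function-value part to $\Gamma_{K-1}\Ebb[f(x^K)-f(x^*)]-(f(x^0)-f(x^*))$, and the third hypothesis $\Gamma_k\alpha_k(\beta_k+\gamma)\ge\Gamma_{k+1}\alpha_{k+1}\beta_{k+1}$ makes the distance part collapse to the single term $\tfrac{\Gamma_0\alpha_0\beta_0}{2}\norm{z^0-x^*}_{[s]}^2$; since $z^0=x^0$, this is exactly the first assertion.

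To establish the one–step inequality I would proceed as follows. Since $\bar x^{k+1}$ differs from $y^k$ only in block $i_k$ — and Option~I merely decreases the objective further from $\bar x^{k+1}$, so the bound on $f(x^{k+1})$ is the same as for Option~II — block-Lipschitz smoothness gives
\[
f(x^{k+1})\le f(y^k)+\bigl\langle\nabla_{i_k}f(y^k),\bar x_{i_k}^{k+1}-y_{i_k}^k\bigr\rangle+\tfrac{L_{i_k}}{2}\norm{\bar x_{i_k}^{k+1}-y_{i_k}^k}_{\ik}^2 .
\]
Substituting $\bar x_{i_k}^{k+1}-y_{i_k}^k=\tfrac{\alpha_k}{p_{i_k}}(z_{i_k}^{k+1}-z_{i_k}^k)$ and using $L_{i_k}/p_{i_k}^2=L_{i_k}^{s}T_{(1-s)/2}^{2}$ rewrites the quadratic error as $\tfrac{\alpha_k^2 T_{(1-s)/2}^{2}}{2}$ times the squared $\norm{\cdot}_{[s]}$-displacement of $z$ in block $i_k$. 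I would then take $\Ebb_{i_k}$, using the coupling $\Ebb_{i_k}[p_{i_k}^{-1}(\cdot)_{i_k}]=\sum_i(\cdot)_i$ to turn $\nabla_{i_k}f(y^k)$ into $\nabla f(y^k)$; split $z^{k+1}-z^k=(z^{k+1}-x^*)+(x^*-z^k)$; and use $y^k=(1-\alpha_k)x^k+\alpha_k z^k$ to rewrite the $\langle\nabla f(y^k),\cdot\rangle$ contribution as $(1-\alpha_k)[f(y^k)+\langle\nabla f(y^k),x^k-y^k\rangle]+\alpha_k[f(y^k)+\langle\nabla f(y^k),x^*-y^k\rangle]$, bounded by $(1-\alpha_k)f(x^k)+\alpha_k\bigl(f(x^*)-\tfrac{\mu_s}{2}\norm{x^*-y^k}_{[s]}^2\bigr)$ via convexity and $\mu_s$-strong convexity. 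Finally I would apply the three-point inequality of Lemma~\ref{lem:optimality-prox-map} to the proximal problem defining $z^{k+1}$, treating its two quadratic terms by combining them around the convex combination $c$ of $y^k$ and $z^k$: this yields the telescoping pair $\tfrac{\alpha_k\beta_k}{2}\norm{z^k-x^*}_{[s]}^2-\tfrac{\alpha_k(\beta_k+\gamma)}{2}\norm{z^{k+1}-x^*}_{[s]}^2$, an $\tfrac{\alpha_k\gamma}{2}\norm{y^k-x^*}_{[s]}^2$ term absorbed by the strong-convexity term because $\mu_s\ge\gamma$, and a negative $z$-displacement term that dominates the smoothness error because $\beta_k\ge\alpha_k T_{(1-s)/2}^2$. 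Multiplying the assembled inequality by $\Gamma_k$ and invoking $\Gamma_{k-1}=(1-\alpha_k)\Gamma_k$ and $\Gamma_k\alpha_k(\beta_k+\gamma)\ge\Gamma_{k+1}\alpha_{k+1}\beta_{k+1}$ gives $\Ebb_{i_k}[\Lambda_{k+1}]\le\Lambda_k$.

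For the ``in particular'' statement I would only need to check that $\alpha_k\equiv\tfrac{\sqrt{\mu_s}}{\sqrt{\mu_s}+T_{(1-s)/2}}$, $\beta_k\equiv\sqrt{\mu_s}\,T_{(1-s)/2}$, $\gamma=\mu_s$ satisfy the three hypotheses: $\mu_s\ge\gamma$ holds with equality; $\beta_k\ge\alpha_k T_{(1-s)/2}^2$ reduces to $\sqrt{\mu_s}+T_{(1-s)/2}\ge T_{(1-s)/2}$; and, since $1-\alpha_k$ is constant, the third reduces to $\gamma(1-\alpha_k)\ge\alpha_k\beta_k$, which also holds with equality. Then $\Gamma_{K-1}=(1-\alpha_k)^{-K}=\bigl(1-\tfrac{\sqrt{\mu_s}}{\sqrt{\mu_s}+T_{(1-s)/2}}\bigr)^{-K}$, a direct computation gives $\Gamma_0\alpha_0\beta_0=\mu_s$, and dividing the first assertion by $\Gamma_{K-1}$ produces the claimed geometric rate.

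The hard part will be the one–step inequality, and specifically the bookkeeping when assembling the prox term: making the negative $z$-displacement produced by Lemma~\ref{lem:optimality-prox-map} exactly absorb the block-smoothness error $\tfrac{\alpha_k^2 T_{(1-s)/2}^2}{2}$-term after the conditional expectation is taken — this is precisely what forces both the sampling weights $p_i\propto L_i^{(1-s)/2}$ and the condition $\beta_k\ge\alpha_k T_{(1-s)/2}^2$ — while keeping track of the $\Gamma_k$ normalizations and of the $L_i^s$-weights inside $\norm{\cdot}_{[s]}^2$ so that the distance terms telescope cleanly despite the non-uniform sampling. The remaining estimates are routine.
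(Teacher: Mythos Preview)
Your proposal is correct and follows essentially the same route as the paper: block-Lipschitz smoothness at $y^k$, the decomposition via $y^k=(1-\alpha_k)x^k+\alpha_k z^k$ together with convexity and $\mu_s$-strong convexity, the three-point inequality of Lemma~\ref{lem:optimality-prox-map} applied to the $z^{k+1}$ subproblem, and then the parameter conditions to cancel the smoothness error and telescope the distance terms. The only cosmetic difference is that you package the argument as a supermartingale decrease of the potential $\Lambda_k$, whereas the paper writes the one-step bound directly, multiplies by $\Gamma_k$, and sums---with the zero-mean ``junk'' terms $\alpha_k\langle\nabla f(y^k),\tfrac{1}{p_{i_k}}\Ubf_{i_k}x_{i_k}-x\rangle$ playing the role of your coupling identity.
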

\begin{proof}
We successively estimate the bound of $f(\bar{x}^{k+1})$ by 
\begin{align}
f(\bar{x}^{k+1}) & \le f(y^{k})+\left\langle \nabla f(y^{k}),\bar{x}^{k+1}-y^{k}\right\rangle +\tfrac{L_{i_{k}}}{2}\|\bar{x}^{k+1}-y^{k}\|^{2}\nonumber \\
 & =(1-\alpha_{k})[f(y^{k})+\langle \nabla f(y^{k}),x^{k}-y^{k}\rangle ]+\tfrac{L_{i_{k}}}{2}\|\bar{x}^{k+1}-y^{k}\|^{2}+\alpha_{k}f(y^{k})\nonumber \\
 & \quad+\alpha_{k}\langle\nabla f(y^{k}),\tfrac{1}{p_{i_{k}}}\Ubf_{\ik}z_{\ik}^{k+1}-y^{k}\rangle+\alpha_{k}\langle\nabla f(y^{k}),z^{k}-\tfrac{1}{p_{i_{k}}}\Ubf_{\ik}z_{\ik}^{k}\rangle\nonumber \\
 & \le(1-\alpha_{k})f(x^{k})+\tfrac{L_{i_{k}}\alpha_{k}^{2}}{2p_{i_{k}}^{2}}\|z_{\ik}^{k}-z_{\ik}^{k+1}\|_{\ik}^{2}+\alpha_{k}f(x)-\tfrac{\alpha_{k}\mu_{s}}{2}\norm{x-y^{k}}_{[s]}^{2}\nonumber \\
 & \quad+\tfrac{\alpha_{k}}{p_{i_{k}}}\langle \nabla_{i_{k}}f(y^{k}),z_{\ik}^{k+1}-x_{\ik}\rangle +\alpha_{k}\langle\nabla f(y^{k}),\tfrac{1}{p_{i_{k}}}\Ubf_{\ik}x_{\ik}-x\rangle\nonumber \\
 & \quad+\alpha_{k}\langle\nabla f(y^{k}),z^{k}-\tfrac{1}{p_{i_{k}}}\Ubf_{\ik}z_{\ik}^{k}\rangle,\label{eq:fk-bound}
\end{align}
where the first equality uses the following identity
\begin{equation*}
\bar{x}^{k+1}=y^{k}+\tfrac{\alpha_{k}}{p_{i}}\Ubf_{\ik}(z_{\ik}^{k+1}-z_{\ik}^{k})=(1-\alpha_{k})x^{k}+\tfrac{\alpha_{k}}{p_{i}}\Ubf_{\ik}z_{\ik}^{k+1}+(\alpha_{k}z^{k}-\tfrac{\alpha_{k}}{p_{i}}\Ubf_{\ik}z_{\ik}^{k}),
\end{equation*}
and the last inequality uses the strong convexity:
\[
f(y^{k})+\left\langle \nabla f(y^{k}),x-y^{k}\right\rangle \le f(x)-\tfrac{\mu_{s}}{2}\|x-y^{k}\|_{[s]}^{2}.
\]
According to Lemma (\ref{lem:optimality-prox-map}), we have
\begin{align}
\tfrac{\alpha_{k}}{p_{i_{k}}}\left\langle \nabla_{i_{k}}f(y^{k}),z_{\ik}^{k+1}-x_{\ik}\right\rangle  & \le\tfrac{\alpha_{k}\gamma}{2}[\|x-y^{k}\|_{[s]}^{2}-\|x-z^{k+1}\|_{[s]}^{2}-\|y^{k}-z^{k+1}\|_{[s]}^{2}]\nonumber \\
 & \quad+\tfrac{\alpha_{k}\beta_{k}}{2}[\|x-z^{k}\|_{[s]}^{2}-\|x-z^{k+1}\|_{[s]}^{2}-\|z^{k}-z^{k+1}\|_{[s]}^{2}]\nonumber \\
 & \le\tfrac{\alpha_{k}\gamma}{2}\|x-y^{k}\|_{[s]}^{2}+\tfrac{\alpha_{k}\beta_{k}}{2}\|x-z^{k}\|_{[s]}^{2}\nonumber \\
 & \quad-\tfrac{\alpha_{k}(\beta_{k}+\gamma)}{2}\|x-z^{k+1}\|_{[s]}^{2}-\tfrac{\alpha_{k}\beta_{k}}{2}\|z^{k}-z^{k+1}\|_{[s]}^{2}\label{eq:linear-term-bound}
\end{align}
In view of (\ref{eq:recur-beta-alpha}), we have 
\[
\tfrac{L_{i_{k}}\alpha_{k}^{2}}{2p_{i_{k}}^{2}}\|z_{\ik}^{k}-z_{\ik}^{k+1}\|_{\ik}^{2}-\tfrac{\alpha_{k}\beta_{k}}{2}\|z^{k}-z^{k+1}\|_{[s]}^{2}\le\tfrac{\alpha_{k}^{2}T_{(1-s)/2}^{2}-\alpha_{k}\beta_{k}}{2}\|z^{k}-z^{k+1}\|_{[s]}^{2}\le0.
\]
From (\ref{eq:recur-gamma}), we have $\tfrac{\alpha_{k}\gamma}{2}[\|x-y^{k}\|_{[s]}^{2}\le\tfrac{\alpha_{k}\mu_{s}}{2}\norm{x-y^{k}}_{[s]}^{2}$. 

Now putting (\ref{eq:fk-bound}) and (\ref{eq:linear-term-bound})
together, we obtain
\begin{align}
f(\bar{x}^{k+1}) & \le(1-\alpha_{k})f(x^{k})+\alpha_{k}f(x)+\tfrac{\alpha_{k}\beta_{k}}{2}\|x-z^{k}\|_{[s]}^{2}-\tfrac{\alpha_{k}(\beta_{k}+\gamma)}{2}\|x-z^{k+1}\|_{[s]}^{2}\nonumber\\
 & \quad+\alpha_{k}\langle\nabla f(y^{k}),\tfrac{1}{p_{i_{k}}}\Ubf_{\ik}x_{\ik}-x\rangle+\alpha_{k}\langle\nabla f(y^{k}),z^{k}-\tfrac{1}{p_{i_{k}}}\Ubf_{\ik}z_{\ik}^{k}\rangle. \label{eq:smooth-mid-01}
\end{align}
We next take the expectation on both sides of \eqref{eq:smooth-mid-01} over $i_k$.
Notice the identity $\Ebb_{i_{k}}\langle\nabla f(y^{k}),\tfrac{1}{p_{i_{k}}}\Ubf_{\ik}x_{\ik}-x\rangle=0$
and $\Ebb_{i_{k}}\langle\nabla f(y^{k}),z^{k}-\tfrac{1}{p_{i_{k}}}\Ubf_{\ik}z_{\ik}^{k}\rangle=0$.
Moreover, notice that for option II of Algorithm \ref{alg:acd-pp-smooth}, we have $f(x^{k+1})+\tfrac{1}{2L_{i_{k}}}\|\nabla_{i_{k}}f(\bar{x}^{k+1})\|_{\ik}^{2}\le f(\bar{x}^{k+1})$,
hence we  always guarantee $f(x^{k+1})\le f(\bar{x}^{k+1})$. Putting all these pieces together, we have 
\[
\Ebb_{i_{k}}[f(x^{k+1})-f(x)]\le(1-\alpha_{k})[f(x^{k})-f(x)]+\tfrac{\alpha_{k}\beta_{k}}{2}\|x-z^{k}\|_{[s]}^{2}-\tfrac{\alpha_{k}(\beta_{k}+\gamma)}{2}\Ebb_{i_{k}}\|x-z^{k+1}\|_{[s]}^{2}.
\]
Note that $\Gamma_{k}=\prod_{i=0}^{k}\tfrac{1}{1-\alpha_{i}}$. Then,
multiplying both sides of the above relation by $\Gamma_{k}$, and
then summing up over $k=0,1,2,...,K-1$, we have 
\begin{align*}
\Gamma_{K-1}\Ebb[f(x^{K})-f(x)] &\le f(x^{0})-f(x)+\tfrac{\Gamma_{0}\alpha_{0}\beta_{0}}{2}\|x-x^{0}\|_{[s]}^{2} \\
& \quad -\tfrac{\Gamma_{K-1}\alpha_{K-1}(\beta_{K-1}+\gamma)}{2}\Ebb\|x-z^{K}\|_{[s]}^{2}.
\end{align*}
Moreover, if we choose $\alpha_{k}=\tfrac{\sqrt{\mu_{s}}}{\sqrt{\mu_{s}}+T_{(1-s)/2}}$, $\beta_{k}=\sqrt{\mu_{s}}T_{(1-s)/2}$ and $\gamma=\mu_{s}$, then
we have 
\[
\Ebb[f(x^{K})-f(x^{*})]\le\left(1-\tfrac{\sqrt{\mu_{s}}}{\sqrt{\mu_{s}}+T_{(1-s)/2}}\right)^{K}\left[f(x^{0})-f(x^{*})+\tfrac{\mu_{s}}{2}\|x-x^{0}\|_{[s]}^{2}\right].
\]
\end{proof}
The best rate of Algorithm \ref{alg:ACD-nuni} is achieved at $s=0$.
We summarize the complexity of such a case in the following corollary.
\begin{corollary}
Let $x^{*}$ be the optimal solution. Assume that $f(\cdot)$ is strongly
convex with norm $\|\cdot\|^{2}$. If we choose $s=0$, then for any
$\epsilon>0$, 
\[
N_{\epsilon}\in\Ocal\left(\tfrac{\sqrt{\mu_{0}}+\sum_{i=1}^{m}\sqrt{L_{i}}}{\sqrt{\mu_0}}\log\tfrac{f(x^{0})-f(x^{*})+\tfrac{\mu_{0}}{2}\|x-x^{0}\|^{2}}{\epsilon}\right)
\]
 iterations of Algorithm \ref{alg:ACD-nuni} are required to obtain
an expected $\epsilon$-accurate solution.
\end{corollary}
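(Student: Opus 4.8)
The plan is to obtain this corollary as a direct specialization of the preceding theorem to the case $s=0$. When $s=0$ we have $L_i^{0}=1$ for every $i\in[m]$, so the weighted norm $\|\cdot\|_{[0]}$ coincides with the ordinary Euclidean norm $\|\cdot\|$ (and hence $\mu_0$ is exactly the strong convexity modulus of $f$ with respect to $\|\cdot\|$), while $T_{(1-s)/2}$ becomes $T_{1/2}=\sum_{i=1}^{m}\sqrt{L_i}$. First I would invoke the theorem with the sampling rule $p_i\propto L_i^{1/2}$ and the parameter choices $\alpha_k=\tfrac{\sqrt{\mu_0}}{\sqrt{\mu_0}+T_{1/2}}$, $\beta_k=\sqrt{\mu_0}\,T_{1/2}$, $\gamma=\mu_0$ stated there, which yields the linear rate
\[
\Ebb[f(x^{K})-f(x^{*})]\le(1-\eta)^{K}\,D,\qquad \eta:=\tfrac{\sqrt{\mu_0}}{\sqrt{\mu_0}+T_{1/2}},\quad D:=f(x^{0})-f(x^{*})+\tfrac{\mu_0}{2}\|x-x^{0}\|^{2}.
\]

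Next I would convert this contraction estimate into an iteration count. Using $1-t\le e^{-t}$ for $t\in[0,1]$ (legitimate since $\eta\in(0,1)$), we get $(1-\eta)^{K}\le e^{-\eta K}$, so it suffices to choose $K$ with $e^{-\eta K}D\le\epsilon$, i.e. $K\ge\eta^{-1}\log(D/\epsilon)$. Substituting $\eta^{-1}=(\sqrt{\mu_0}+T_{1/2})/\sqrt{\mu_0}=\bigl(\sqrt{\mu_0}+\sum_{i=1}^{m}\sqrt{L_i}\bigr)/\sqrt{\mu_0}$ gives the claimed bound on $N_\epsilon$, since each iteration of Algorithm \ref{alg:ACD-nuni} performs exactly one block gradient evaluation, so the iteration count is $N_\epsilon=K$.

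There is essentially no technical obstacle here: the corollary is a bookkeeping consequence of the theorem. The only points requiring a little care are (i) checking that the normalization $L_i^{0}=1$ makes $\|\cdot\|_{[0]}$ literally the Euclidean norm, so that $\mu_0$ is the standard strong convexity constant and the block-complexity dependence reduces exactly to $\sum_{i=1}^{m}\sqrt{L_i}=T_{1/2}$, and (ii) verifying $\eta\in(0,1)$ so that the inequality $(1-\eta)^K\le e^{-\eta K}$ applies; both are immediate. I would then note that this $s=0$ rate is the one that minimizes the per-iteration progress factor among $s\in[0,1]$, which is why it is singled out in the corollary.
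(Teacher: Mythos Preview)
Your proposal is correct and follows essentially the same approach as the paper: specialize the preceding theorem to $s=0$ (so $T_{(1-s)/2}=T_{1/2}=\sum_{i=1}^m\sqrt{L_i}$ and $\|\cdot\|_{[0]}=\|\cdot\|$), apply the inequality $(1-\eta)^K\le e^{-\eta K}$, and solve $e^{-\eta K}D\le\epsilon$ for $K$. The paper's proof is even terser than yours, but the argument is identical.
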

\begin{proof}
From Theorem \ref{thm:acpp}, we have that 
\begin{align*}
\Ebb[f(x^{K})-f(x^{*})] & \le\left(1-\tfrac{\sqrt{\mu_{0}}}{\sqrt{\mu_{0}}+T_{1/2}}\right)^{K}\left[f(x^{0})-f(x^{*})+\tfrac{\mu_{0}}{2}\|x-x^{0}\|^{2}\right]\\
 & \le\exp\left(-\tfrac{\sqrt{\mu_{0}}}{\sqrt{\mu_{0}}+T_{1/2}}K\right)\left[f(x^{0})-f(x^{*})+\tfrac{\mu_{0}}{2}\|x-x^{0}\|^{2}\right].
\end{align*}
\end{proof}

\bibliographystyle{jabbrv_siam}
\bibliography{bcd}

\begin{thebibliography}{10}

\bibitem{allen-zhu2016even}
{\sc Z.~{Allen-Zhu}, Z.~{Qu}, P.~{Richt\'{a}rik}, and Y.~{Yuan}}, {\em Even
  faster accelerated coordinate descent using non-uniform sampling},
  \JournalTitle{International Conference on Machine Learning},  (2016),
  pp.~1110--1119.

\bibitem{RN340}
{\sc N.~T. An and N.~M. Nam}, {\em Convergence analysis of a proximal point
  algorithm for minimizing differences of functions},
  \JournalTitle{Optimization}, 66 (2017), pp.~129--147.

\bibitem{beck2018optimization}
{\sc A.~{Beck} and N.~{Hallak}}, {\em Optimization problems involving group
  sparsity terms}, \JournalTitle{Mathematical Programming},  (2018), pp.~1--29.

\bibitem{RN18}
{\sc A.~Beck and L.~Tetruashvili}, {\em On the convergence of block coordinate
  descent type methods}, \JournalTitle{SIAM Journal on Optimization}, 23
  (2013), pp.~2037--2060.

\bibitem{RN323}
{\sc Y.~Carmon, J.~C. Duchi, O.~Hinder, and A.~Sidford}, {\em Accelerated
  methods for non-convex optimization}, \JournalTitle{arXiv preprint
  arXiv:1611.00756},  (2016).

\bibitem{CC01a}
{\sc C.-C. Chang and C.-J. Lin}, {\em {LIBSVM}: A library for support vector
  machines}, \JournalTitle{ACM Transactions on Intelligent Systems and
  Technology}, 2 (2011), pp.~27:1--27:27.
\newblock Software available at \url{http://www.csie.ntu.edu.tw/~cjlin/libsvm}.

\bibitem{RN66}
{\sc C.~D. Dang and G.~Lan}, {\em Stochastic block mirror descent methods for
  nonsmooth and stochastic optimization}, \JournalTitle{SIAM Journal on
  Optimization}, 25 (2015), pp.~856--881.

\bibitem{davis2019proximally}
{\sc D.~{Davis} and B.~{Grimmer}}, {\em Proximally guided stochastic
  subgradient method for nonsmooth, nonconvex problems}, \JournalTitle{SIAM
  Journal on Optimization}, 29 (2019), pp.~1908--1930.

\bibitem{drusvyatskiy2018efficiency}
{\sc D.~{Drusvyatskiy} and C.~{Paquette}}, {\em Efficiency of minimizing
  compositions of convex functions and smooth maps}, \JournalTitle{Mathematical
  Programming},  (2018), pp.~1--56.

\bibitem{Dua:2019}
{\sc D.~Dua and C.~Graff}, {\em {UCI} machine learning repository}, 2017.

\bibitem{duchi2018stochastic}
{\sc J.~C. {Duchi} and F.~{Ruan}}, {\em Stochastic methods for composite and
  weakly convex optimization problems}, \JournalTitle{SIAM Journal on
  Optimization}, 28 (2018), pp.~3229--3259.

\bibitem{fan2001variable}
{\sc J.~{Fan} and R.~{Li}}, {\em Variable selection via nonconcave penalized
  likelihood and its oracle properties}, \JournalTitle{Journal of the American
  Statistical Association}, 96 (2001), pp.~1348--1360.

\bibitem{saeed-lan-nonconvex-2013}
{\sc S.~Ghadimi and G.~Lan}, {\em Stochastic first-and zeroth-order methods for
  nonconvex stochastic programming}, \JournalTitle{SIAM Journal on
  Optimization}, 23 (2013), pp.~2341--2368.

\bibitem{RN97}
\leavevmode\vrule height 2pt depth -1.6pt width 23pt, {\em Accelerated gradient
  methods for nonconvex nonlinear and stochastic programming},
  \JournalTitle{Mathematical Programming}, 156 (2016), pp.~59--99.

\bibitem{gong2013a}
{\sc P.~{Gong}, C.~{Zhang}, Z.~{Lu}, J.~Z. {Huang}, and J.~{Ye}}, {\em A
  general iterative shrinkage and thresholding algorithm for non-convex
  regularized optimization problems}, \JournalTitle{International Conference on
  Machine Learning}, 28 (2013), pp.~37--45.

\bibitem{hien2019inertial}
{\sc L.~T.~K. {Hien}, N.~{Gillis}, and P.~{Patrinos}}, {\em Inertial block
  mirror descent method for non-convex non-smooth optimization},
  \JournalTitle{arXiv preprint arXiv:1903.01818},  (2019).

\bibitem{hong2016a}
{\sc M.~{Hong}, M.~{Razaviyayn}, Z.~Q. {Luo}, and J.~S. {Pang}}, {\em A unified
  algorithmic framework for block-structured optimization involving big data:
  With applications in machine learning and signal processing},
  \JournalTitle{IEEE Signal Processing Magazine}, 33 (2016), pp.~57--77.

\bibitem{RN343}
{\sc K.~{Khamaru} and M.~J. {Wainwright}}, {\em Convergence guarantees for a
  class of non-convex and non-smooth optimization problems},
  \JournalTitle{International Conference on Machine Learning},  (2018),
  pp.~2606--2615.

\bibitem{kong2018complexity}
{\sc W.~{Kong}, J.~G. {Melo}, and R.~D. {Monteiro}}, {\em Complexity of a
  quadratic penalty accelerated inexact proximal point method for solving
  linearly constrained nonconvex composite programs}, \JournalTitle{arXiv
  preprint arXiv:1802.03504},  (2018).

\bibitem{lan2018accelerated}
{\sc G.~{Lan} and Y.~{Yang}}, {\em Accelerated stochastic algorithms for
  nonconvex finite-sum and multi-block optimization}, \JournalTitle{arXiv
  preprint arXiv:1805.05411},  (2018).

\bibitem{lin2015an}
{\sc Q.~{Lin}, Z.~{Lu}, and L.~{Xiao}}, {\em An accelerated randomized proximal
  coordinate gradient method and its application to regularized empirical risk
  minimization}, \JournalTitle{SIAM Journal on Optimization}, 25 (2015),
  pp.~2244--2273.

\bibitem{RN180}
{\sc Y.~Nesterov}, {\em Efficiency of coordinate descent methods on huge-scale
  optimization problems}, \JournalTitle{SIAM Journal on Optimization}, 22
  (2012), pp.~341--362.

\bibitem{nesterov2017efficiency}
{\sc Y.~{Nesterov} and S.~U. {Stich}}, {\em Efficiency of accelerated
  coordinate descent method on structured optimization problems},
  \JournalTitle{SIAM Journal on Optimization}, 27 (2017), pp.~110--123.

\bibitem{nouiehed2018on}
{\sc M.~{Nouiehed}, J.-S. {Pang}, and M.~{Razaviyayn}}, {\em On the
  pervasiveness of difference-convexity in optimization and statistics},
  \JournalTitle{Mathematical Programming},  (2018), pp.~1--28.

\bibitem{patrascu2015efficient}
{\sc A.~{Patrascu} and I.~{Necoara}}, {\em Efficient random coordinate descent
  algorithms for large-scale structured nonconvex optimization},
  \JournalTitle{Journal of Global Optimization}, 61 (2015), pp.~19--46.

\bibitem{RN216}
{\sc P.~Richtarik and M.~Takac}, {\em Iteration complexity of randomized
  block-coordinate descent methods for minimizing a composite function},
  \JournalTitle{Mathematical Programming}, 144 (2014), pp.~1--38.

\bibitem{richtarik2016parallel}
{\sc P.~{Richtarik} and M.~{Takac}}, {\em Parallel coordinate descent methods
  for big data optimization}, \JournalTitle{Mathematical Programming}, 156
  (2016), pp.~433--484.

\bibitem{thi2018dc}
{\sc H.~A.~L. {Thi} and T.~P. {Dinh}}, {\em Dc programming and dca: thirty
  years of developments}, \JournalTitle{Mathematical Programming}, 169 (2018),
  pp.~5--68.

\bibitem{thi2015dc}
{\sc H.~L. {Thi}, T.~P. {Dinh}, H.~{Le}, and X.~{Vo}}, {\em {DC} approximation
  approaches for sparse optimization}, \JournalTitle{European Journal of
  Operational Research}, 244 (2015), pp.~26--46.

\bibitem{wen2018proximal}
{\sc B.~{Wen}, X.~{Chen}, and T.~K. {Pong}}, {\em A proximal
  difference-of-convex algorithm with extrapolation},
  \JournalTitle{Computational Optimization and Applications}, 69 (2018),
  pp.~297--324.

\bibitem{xu2018stochastic}
{\sc Y.~{Xu}, Q.~{Qi}, Q.~{Lin}, R.~{Jin}, and T.~{Yang}}, {\em Stochastic
  optimization for dc functions and non-smooth non-convex regularizers with
  non-asymptotic convergence}, \JournalTitle{arXiv preprint arXiv:1811.11829},
  (2018).

\bibitem{xu2017a}
{\sc Y.~{Xu} and W.~{Yin}}, {\em A globally convergent algorithm for nonconvex
  optimization based on block coordinate update}, \JournalTitle{Journal of
  Scientific Computing}, 72 (2017), pp.~700--734.

\bibitem{gotoh2018dc}
{\sc J.~ya~{Gotoh}, A.~{Takeda}, and K.~{Tono}}, {\em {DC} formulations and
  algorithms for sparse optimization problems}, \JournalTitle{Mathematical
  Programming}, 169 (2018), pp.~141--176.

\bibitem{yuille2002concave}
{\sc A.~L. {Yuille} and A.~{Rangarajan}}, {\em The concave-convex procedure
  ({CCCP})}, in Advances in Neural Information Processing Systems 14, 2002,
  pp.~1033--1040.

\bibitem{zhang2012a}
{\sc C.-H. {Zhang} and T.~{Zhang}}, {\em A general theory of concave
  regularization for high-dimensional sparse estimation problems},
  \JournalTitle{Statistical Science}, 27 (2012), pp.~576--593.

\end{thebibliography}

\end{document}